\newtheorem{theorem}{Theorem}[section]
\newtheorem{lemma}[theorem]{Lemma}
\newtheorem{proposition}[theorem]{Proposition}
\newtheorem{corollary}[theorem]{Corollary}
\theoremstyle{remark}
\newtheorem*{notation}{Notation}
\newcommand{\bC}{\mathbb{C}}
\newcommand{\bQ}{\mathbb{Q}}
\newcommand{\bR}{\mathbb{R}}
\newcommand{\bZ}{\mathbb{Z}}
\newcommand{\cont}{\mathrm{cont}}
\newcommand{\cC}{\mathcal{C}}
\newcommand{\cE}{\mathcal{E}}
\newcommand{\cO}{\mathcal{O}}
\newcommand{\den}{\mathrm{den}}
\newcommand{\et}{\quad\text{and}\quad}
\newcommand{\GL}{\mathrm{GL}}
\newcommand{\SL}{\mathrm{SL}}
\newcommand{\trace}{\mathrm{trace}}
\newcommand{\uT}{\mathbf{T}}
\newcommand{\uu}{\mathbf{u}}
\newcommand{\ux}{\mathbf{x}}
\newcommand{\uy}{\mathbf{y}}
\newcommand{\vol}{\mathrm{vol}}
\numberwithin{equation}{section}
\begin{document}

\baselineskip=15pt

\title[Measures of approximation to Markoff extremal numbers]
{Measures of algebraic approximation\\ to Markoff extremal numbers}
\author{Damien ROY}
\author{Dmitrij Zelo}
\address{
   D\'epartement de Math\'ematiques\\
   Universit\'e d'Ottawa\\
   585 King Edward\\
   Ottawa, Ontario K1N 6N5, Canada}
\email[Damien Roy]{droy@uottawa.ca}
\email[Dmitrij Zelo]{dzelo097@uottawa.ca}
\subjclass[2000]{Primary 11J13; Secondary 11J04, 11J82}
\keywords{height, algebraic numbers, approximation to real numbers,
exponent of approximation, simultaneous approximation, measures of transcendence}
\thanks{Research partially supported by NSERC}

\begin{abstract}
Let $\xi$ be a real number which is neither rational nor quadratic over $\bQ$.  Based on work of Davenport and Schmidt, Bugeaud and Laurent have shown that, for any real number $\theta$, there exist a constant $c>0$ and infinitely many non-zero polynomials $P\in\bZ[T]$ of degree at most $2$ such that $|\theta-P(\xi)|\le c \|P\|^{-\gamma}$ where $\gamma=(1+\sqrt{5})/2$ denotes for the golden ratio and where the norm $\|P\|$ of $P$ stands for the largest absolute value of its coefficients.  In the present paper, we show conversely that there exists a class of transcendental numbers $\xi$ for which the above estimates are optimal up to the value of the constant $c$ when one takes $\theta=R(\xi)$  for a polynomial $R\in\bZ[T]$ of degree $d\in\{3,4,5\}$ but curiously not for degree  $d=6$, even with $\theta=2\xi^6$.
\end{abstract}


\maketitle

\section{Introduction}

Define the \emph{norm} $\|P\|$ of a polynomial $P\in\bC[T]$ as the largest absolute value of its coefficients, and the \emph{height} $H(\alpha)$ of an algebraic number $\alpha$ as the norm of its irreducible polynomial in $\bZ[T]$.  Denote also by $\gamma=(1+\sqrt{5})/2$ the golden ratio. In their study of approximation to real numbers by algebraic integers of bounded degree, H.~Davenport and W.~M.~Schmidt established that, for each real number $\xi$ which is neither rational nor quadratic over $\bQ$, there exist infinitely many algebraic integers of degree at most $3$ satisfying
\begin{equation}
\label{intro:eq:DS}
 |\xi-\alpha| \le c H(\alpha)^{-\gamma-1}
\end{equation}
for a suitable positive constant $c$ depending only on $\xi$ \cite[Thm.~1]{DS}.  Some years ago, Y.~Bugeaud and O.~Teuli\'e observed that the same result still holds if we restrict to algebraic integers of degree exactly $3$ (see \cite{BT} and the refinement in \cite{Te}).  By another adaptation of the method of Davenport and Schmidt, Teuli\'e also noted in \cite{Te} that the same result holds with algebraic units $\alpha$ of degree $4$ and norm $1$.  He could also have used algebraic integers of degree $4$ and trace $0$ or units of degree $5$ and trace $0$.  Many variations are possible.  From a different perspective, Y.~Bugeaud and M.~Laurent proved that, for any $\theta\in\bR$, there exist a constant $c=c(\xi,\theta)>0$ and infinitely many non-zero polynomials $P\in\bZ[T]$ of degree at most $2$ such that
\begin{equation}
\label{intro:eq:BL}
 |\theta-P(\xi)| \le c \|P\|^{-\gamma}
\end{equation}
\cite[Prop.~1, Cor.~(iii)]{BL}.  When $\theta=\xi^3$, this follows from the original result of Davenport and Schmidt mentioned above.  When $\theta=\xi^3+1/\xi$, it follows from the result of Teuli\'e in \cite{Te}.

In \cite{RcubicII}, it is shown that, for a non-empty set of transcendental numbers $\xi$, the above-mentioned result of Davenport and Schmidt is optimal up to the value of the constant $c$.  By \cite[Thm.~6.2]{Rdio}, this is also true of \eqref{intro:eq:BL} with $\theta=\xi^3$.  The purpose of the present paper, in continuation to \cite[Chap.~4]{Ze}, is to extend these observations to a larger class of the above mentioned estimates.  To this end, we recall that all of these estimates derive from a single result via geometry of numbers.  The latter, again due to Davenport and Schmidt, reads as follows.  For each real number $\xi$ which is not rational nor quadratic over $\bQ$, there exist a constant $c=c(\xi)>0$ and arbitrarily large real numbers $X\ge 1$ for which the inequalities
\begin{equation}
\label{intro:eq:DSbis}
 |x_0|\le X, \quad |x_0\xi-x_1| \le cX^{-1/\gamma}, \quad |x_0\xi^2-x_2| \le cX^{-1/\gamma}
\end{equation}
have no non-zero solution $(x_0,x_1,x_2)$ in $\bZ^3$ \cite[Thm.~1a]{DS}.  Moreover, the method is such that, for a given $\xi$, any strengthening of this statement which replaces the constant $c$ by a function of $X$ tending to infinity with $X$ leads to corresponding improvements in all the above mentioned results of approximation to this number $\xi$.  In \cite{RcubicI}, it is shown that there exist countably many real numbers $\xi$ which are not rational nor quadratic over $\bQ$, for which no such strengthening is possible.  We should therefore restrict to these numbers henceforth called \emph{extremal}.

For each extremal number $\xi$, there exists, by definition, a constant $c>0$ such that the inequalities \eqref{intro:eq:DSbis} have a non-zero solution $(x_0,x_1,x_2) \in \bZ^3$ for any $X\ge 1$.  Although the extremal numbers are transcendental over $\bQ$ (this follows from Schmidt' subspace theorem), they are in this respect the transcendental numbers which behave the most closely like quadratic real numbers.  Moreover, each extremal real number $\xi$ comes with a sequence of ``best quadratic approximations'' $(\alpha_k)_{k \ge 1}$ which is uniquely determined by $\xi$ up to its first terms. In \cite[Prop.~4.9]{Rmarkoff}, it is shown that the sequence of their conjugates $(\bar{\alpha}_k)_{k\ge 1}$ admits exactly two accumulation points, called the \emph{conjugates} of $\xi$.

Let $\{\xi\}$ denote the distance from a real number $\xi$ to a closest integer.  The Lagrange constant of a real number $\xi$ is $\nu(\xi):=\liminf_{n\to\infty} n\{n\xi\}$.  When $\xi$ is not rational nor quadratic over $\bQ$, Markoff's theory tells us that $\nu(\xi)\le 1/3$ (see \cite{Ma,Mb} or \cite[Ch.~2]{Ca}).  In \cite{Rmarkoff}, it is shown that there exist extremal numbers $\xi$ with largest possible Lagrange constant $\nu(\xi)=1/3$.  We call them \emph{Markoff extremal numbers}.  By \cite[Lemma 4.3]{Rmarkoff}, a countable subset of such numbers $\xi$ have conjugates $\xi\pm 3$.  The first main result of this paper reads as follows.

\begin{theorem}
\label{intro:thmPR}
Let $\xi$ be a Markoff extremal number whose conjugates are $\xi\pm 3$, and let $d\in\{3, 4, 5\}$.  For any pair of
polynomials $P, R\in\bZ[T]$ with $\deg(R)=d$ and $\deg(P)\le 2$, we have
\[
 |P(\xi)+R(\xi)| \ge
  c (1+\|P\|)^{-\gamma}\cdot
  \begin{cases}
    \|R\|^{-\gamma^4} &\ \text{if\, $d=3$,}\\[3pt]
    \|R\|^{-\gamma^{d+7}} &\ \text{if\, $d=4$ or $d=5$,}
  \end{cases}
\]
for a constant $c=c(\xi)>0$.
\end{theorem}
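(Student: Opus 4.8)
The proof will rest on the structure theory of extremal numbers from \cite{RcubicI,Rmarkoff}: an extremal number $\xi$ carries an essentially canonical sequence of quadratic algebraic numbers $(\alpha_k)_{k\ge 1}$ whose defining integer binary quadratic forms $q_k$ satisfy a Fibonacci-type recurrence, with heights $H(\alpha_k)\asymp X_k$ where $X_{k+1}\asymp X_k^{\gamma}$, and with $|q_k(1,\xi)|\asymp X_k^{-1/\gamma}$ (equivalently $|\xi-\alpha_k|\asymp X_k^{-\gamma-1}$). The key auxiliary fact, proved in \cite{RcubicII} and \cite[Thm.~6.2]{Rdio} for the degree-$3$ case, is that these approximations are essentially the \emph{only} good ones; quantitatively, if $(x_0,x_1,x_2)\in\bZ^3$ is non-zero with $|x_0|\le X$ and $\max(|x_0\xi-x_1|,|x_0\xi^2-x_2|)$ small, then $(x_0,x_1,x_2)$ is, up to a bounded rational multiple, one of the $(1,\ux_k,\ldots)$, and in particular $|x_0\xi-x_1|$ and $|x_0\xi^2-x_2|$ cannot be much smaller than $X^{-1/\gamma}$ with $X=|x_0|$. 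The hypothesis that the conjugates of $\xi$ are $\xi\pm 3$ (so that $\xi$ is Markoff extremal) is what pins down the exact arithmetic of the forms $q_k$ and lets one compute, rather than merely estimate, the relevant quantities.

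\textbf{Step 1: reduce to a linear form in monomials.} Write $Q=P+R$, a polynomial of degree $d$ with integer coefficients, and suppose for contradiction that $|Q(\xi)|$ is much smaller than the claimed bound. Consider the values $\xi^j$ for $0\le j\le d$. Using the relations coming from the quadratic forms $q_k$ — concretely, $q_k(1,\xi)$ is small, and $\xi^2$ is, modulo a small error, a $\bZ$-linear combination of $1$ and $\xi$ with coefficients of size $\asymp H(\alpha_k)$ — one can iteratively rewrite each power $\xi^j$ ($2\le j\le d$) in terms of $1$ and $\xi$, accumulating an error term controlled by $\|R\|\cdot H(\alpha_k)^{d-2}\cdot|q_k(1,\xi)|\asymp\|R\| X_k^{d-2} X_k^{-1/\gamma}$. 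Combining this with $P(\xi)$, we produce, for each $k$, an integer vector $(y_0,y_1,y_2)$ with $y_0\asymp\|R\| X_k^{d-2}$ (roughly), with $y_0\xi-y_1$ and $y_0\xi^2-y_2$ small, and such that a suitable linear combination equals $Q(\xi)$ up to the accumulated error. The exponents $\gamma^4$ (for $d=3$) and $\gamma^{d+7}$ (for $d=4,5$) should emerge precisely from optimising over $k$, i.e.\ choosing $X_k$ as a specific power of $\|R\|$ so that the two competing error terms balance against the gap $X^{-1/\gamma}$ that is forbidden by extremality.

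\textbf{Step 2: invoke the rigidity of extremal numbers and derive a contradiction.} Apply the quantitative rigidity statement (the strengthened form of \cite[Thm.~6.2]{Rdio} / \cite[Chap.~4]{Ze}): the vector $(y_0,y_1,y_2)$ produced in Step 1, being a small solution of \eqref{intro:eq:DSbis} at height $X\asymp y_0$, must be a bounded multiple of a convergent $q_k$-vector, whence $|y_0\xi-y_1|\gg y_0^{-1/\gamma}$. On the other hand, by construction this quantity is $\ll \|P\|/y_0 + (\text{accumulated error})/y_0 \cdot(\ldots)$, and if $|Q(\xi)|$ were below the claimed threshold one forces $|y_0\xi-y_1|$ strictly below $y_0^{-1/\gamma}$ — a contradiction, provided $\|R\|$ (hence $k$) is large; the finitely many small cases are absorbed into the constant $c$. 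The dichotomy $d=3$ versus $d\in\{4,5\}$ arises because for $d=3$ the rewriting uses a single form $q_k$ (one application of the recurrence), whereas for $d=4,5$ one needs $q_k$ and $q_{k+1}$, introducing an extra factor of $X_{k+1}\asymp X_k^\gamma$ and shifting the exponent from $\gamma^4$ to $\gamma^{d+7}$.

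\textbf{Main obstacle.} The genuinely hard part is making Step 1 \emph{quantitatively exact} rather than merely order-of-magnitude: one must track the implied constants — or better, the exact leading behaviour, using that the conjugates are $\xi\pm 3$ — through the reduction of $\xi^d$ to $1,\xi$, because the theorem asserts sharp exponents with no epsilon loss. In particular one needs the precise size of the coefficients expressing $\xi^j$ in the "$\{1,\xi\}$-basis mod $q_k$", and the precise size of $q_k(1,\xi)$, both of which depend on the Markoff data; showing that no unexpected cancellation occurs in $Q(\xi)$ — i.e.\ that the vector $(y_0,y_1,y_2)$ is really non-zero and of the expected size — is where the conjugate hypothesis $\xi\pm 3$ does the essential work. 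The contrast with $d=6$ advertised in the abstract should manifest here as the breakdown of this balancing: for $d=6$ the natural candidate vector can be chosen to coincide with a genuine $q_k$-approximation even when $\theta=2\xi^6$, so the rigidity argument no longer yields a contradiction.
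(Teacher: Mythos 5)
Your outline is not the paper's argument, and as it stands it has a genuine gap at the decisive point. The flaw is in Step 2: the inference ``the vector $(y_0,y_1,y_2)$ must be a bounded multiple of a convergent vector, whence $|y_0\xi-y_1|\gg y_0^{-1/\gamma}$'' is backwards. For an extremal number the convergent vectors $\ux_k$ are exactly the ones for which this quantity is abnormally \emph{small}, $|x_{k,0}\xi-x_{k,1}|\asymp X_k^{-1}\ll X_k^{-1/\gamma}$, and there is no lower bound of the shape $y_0^{-1/\gamma}$ valid for all integer vectors. So producing a vector with small values of $|y_0\xi-y_1|$, $|y_0\xi^2-y_2|$ yields no contradiction unless you first rule out that your constructed vector is (up to bounded rational multiple) one of the $\ux_j$ --- and that exclusion is precisely the ``no unexpected cancellation'' issue you acknowledge as the main obstacle but for which you offer no mechanism beyond the assertion that the hypothesis on the conjugates ``does the essential work.'' Likewise, the sharp exponents $\gamma^4$ and $\gamma^{d+7}$ are asserted to ``emerge from optimisation,'' but your error budget $\|R\|X_k^{d-2}X_k^{-1/\gamma}$ does not visibly balance to these values, so the quantitative heart of the theorem is not reached.

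For contrast, the paper never constructs new small integer vectors nor invokes a rigidity statement. It bounds $|R(\xi)+P(\xi)|$ from below by multiplying by $x_{k+d-2,0}$ and estimating fractional parts: since $\{x_{k,0}\xi^j\}\ll X_k^{-1}$ for $j\le 2$, one has $\{x_{k,0}P(\xi)\}\ll\|P\|X_k^{-1}$, and the whole problem reduces to a \emph{lower} bound for $\{x_{k+d-2,0}R(\xi)\}$. The non-cancellation is then handled arithmetically: the commutation formulas give, e.g., $x_{k,0}x_{k+2,0}R(\xi)\equiv rA_k+\cO(X_{k+1}^{-1}\|R\|)\bmod x_{k,0}\bZ$ with $r$ the leading coefficient of $R$, and the key fact (Proposition \ref{propGCD}) is that $\gcd(x_{k,0},A_k)=\gcd(x_{k,0},E_k)=\cont(Q_{k+1})\in\{1,2\}$, so whenever $x_{k,0}\nmid 2r$ the relevant fractional part is at least $|x_{k,0}|^{-1}$ minus a controlled error. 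Combined with the fact that the sequence $(\{x_{k,0}R(\xi)\})_{k\ge1}$ has only finitely many accumulation points (period $3$ for $\deg R\le 3$, period $6$ up to degree $5$, Proposition \ref{propdelta}), this lower bound at special indices propagates to all large $k$, and the exponents $\gamma^4$ and $\gamma^{d+7}$ come out of the explicit index shifts ($k+2$, $k+4$, $k+d-2$) and the final choice of $k$ relative to $(1+\|P\|)\|R\|^{\gamma^2}$ or $(1+\|P\|)\|R\|^{\gamma^{d+5}}$. Without an analogue of the gcd/content estimate (or some other arithmetic input pinning down the denominators), your scheme cannot certify that the quantities you need to be bounded away from zero actually are, so the proof does not go through as proposed.
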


For $d\in\{3,4\}$, this refines \cite[Thm.~4.2.1]{Ze}, although the latter result applies to a larger class of extremal numbers (the same as in \cite{RcubicII}).  The reader will note that we put special care in all estimates for degree $d=3$. As an immediate consequence, we get:

\begin{corollary}
\label{intro:cor1}
Let $\xi$ be as in Theorem \ref{intro:thmPR} and let $R\in\bZ[T]$ with $3\le\deg(R)\le 5$.
For any polynomial $P\in\bZ[T]$ with $\deg(P)\le 2$ and for any root $\alpha$ of the sum $P+R$,
we have $|\xi-\alpha| \ge c H(\alpha)^{-\gamma-1}$ with a constant $c=c(\xi,R)>0$.
\end{corollary}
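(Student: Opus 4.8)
The plan is to pass from the approximation $|\xi-\alpha|$ to the value at $\xi$ of the minimal polynomial of $\alpha$, and then to apply Theorem \ref{intro:thmPR} to a polynomial built from it. Write $p\in\bZ[T]$ for the primitive minimal polynomial of $\alpha$, let $e=\deg p$, let $a$ be its leading coefficient, and recall $H(\alpha)=\|p\|$ and $e\le d:=\deg R$. We may assume $|\xi-\alpha|<1$ (otherwise $|\xi-\alpha|\ge 1\ge H(\alpha)^{-\gamma-1}$), and that $H(\alpha)$ is as large as we wish, since there are only finitely many algebraic numbers of degree $\le 5$ and bounded height, none equal to the transcendental number $\xi$. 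Factoring $p(T)=a\prod_{i=1}^e(T-\alpha^{(i)})$ with $\alpha^{(1)}=\alpha$, using $|\xi-\alpha^{(i)}|\le 2\max(1,|\xi|)\max(1,|\alpha^{(i)}|)$ together with Landau's inequality $|a|\prod_{i=1}^e\max(1,|\alpha^{(i)}|)\le\sqrt{e+1}\,\|p\|$, one gets $|a|\prod_{i\ge 2}|\xi-\alpha^{(i)}|\le c_1(\xi)H(\alpha)$, hence
\[
 |\xi-\alpha|=\frac{|p(\xi)|}{|a|\prod_{i\ge 2}|\xi-\alpha^{(i)}|}\ \ge\ \frac{|p(\xi)|}{c_1(\xi)\,H(\alpha)}.
\]
So it suffices to prove that $|p(\xi)|\ge c_2(\xi,R)\,H(\alpha)^{-\gamma}$.

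The leverage for this is that $p$ divides $P+R$ in $\bZ[T]$ while $\deg P\le 2<d$: the leading coefficient $a$ of $p$ then divides $\operatorname{lead}(R)$, so $|a|\le\|R\|$; and writing $P+R=pg$ with $g\in\bZ[T]$ one has $\deg g=d-e$ and the coefficients of $pg$ in degrees above $2$ coincide with those of $R$. When $e\le 3$ this realizes $p$ itself in the required form: put $R_1=aT^3$ and $P_1=T^{3-e}p-aT^3$ (of degree $\le 2$), so that $\alpha$ is a root of $P_1+R_1=T^{3-e}p$, with $\deg R_1=3$, $\|R_1\|=|a|\le\|R\|$, $\|P_1\|\le\|p\|=H(\alpha)$. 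Theorem \ref{intro:thmPR} then gives $|\xi|^{3-e}|p(\xi)|=|(P_1+R_1)(\xi)|\ge c\,(1+\|P_1\|)^{-\gamma}\|R_1\|^{-\gamma^4}\ge c\,\|R\|^{-\gamma^4}(1+H(\alpha))^{-\gamma}$, whence $|p(\xi)|\ge c_2(\xi,R)H(\alpha)^{-\gamma}$ (for $e=1$ this also follows cheaply from the fact that $\xi$, being Markoff extremal, is badly approximable by rationals).

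The case $e=d$ is equally easy: then $P+R=cp$ for a nonzero integer $c$ with $|c|=|\operatorname{lead}(R)/a|\le\|R\|$, so $\|P\|\le\|P+R\|+\|R\|\le|c|H(\alpha)+\|R\|\le c_3(R)H(\alpha)$, and Theorem \ref{intro:thmPR} applied to $P+R$ yields $|p(\xi)|=|(P+R)(\xi)|/|c|\ge c_2(\xi,R)H(\alpha)^{-\gamma}$. What remains — and where the real work lies — is the case $e=4$, $d=5$, i.e.\ $\deg\alpha$ exactly one less than $\deg R$. Then $p$ cannot be written as (a polynomial of degree $\le 2$) plus (a polynomial of degree in $\{3,4,5\}$ of bounded norm), since the subleading coefficient of $p$ need not be bounded, so Theorem \ref{intro:thmPR} has to be applied to $P+R=pg$ with $g$ linear, and one must bound $\|g\|$, $|g(\xi)|$ and $\|P\|$ in terms of $H(\alpha)$. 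The three identities expressing the $T^5$, $T^4$, $T^3$ coefficients of $pg$ through those of $R$, together with $|a|\le\|R\|$ and the primitivity of $p$, determine $g$ up to finitely many choices and bound its coefficients (and the top coefficients of $p$) by a small power of $H(\alpha)$; feeding these into Theorem \ref{intro:thmPR} applied to $P+R$ and dividing by $g(\xi)$ should give $|p(\xi)|\ge c_2(\xi,R)H(\alpha)^{-\gamma}$. Getting this final estimate out with the exact exponent $-\gamma$ is, I expect, the main obstacle.

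In all cases, combining $|p(\xi)|\ge c_2(\xi,R)H(\alpha)^{-\gamma}$ with the displayed inequality gives $|\xi-\alpha|\ge c(\xi,R)\,H(\alpha)^{-\gamma-1}$, which is the assertion of the corollary.
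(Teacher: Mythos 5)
Your overall reduction is sound, and most of it is correct: the passage from $|\xi-\alpha|$ to $|p(\xi)|/H(\alpha)$ for the minimal polynomial $p$, the case $\deg\alpha\le 3$ (where $T^{3-\deg\alpha}p=P_1+aT^3$ with $a$ dividing the leading coefficient of $R$, so Theorem \ref{intro:thmPR} with $d=3$ and $\|R_1\|\le\|R\|$ gives $|p(\xi)|\gg H(\alpha)^{-\gamma}$), and the case $\deg\alpha=\deg R$ (where the cofactor is a bounded constant, so $1+\|P\|\ll H(\alpha)$). Note that the paper gives no argument at all: the corollary is presented as an immediate consequence of Theorem \ref{intro:thmPR}, via the usual chain $c(1+\|P\|)^{-\gamma}\|R\|^{-e_d}\le|(P+R)(\xi)|\ll\|P+R\|\,|\xi-\alpha|$, which yields the stated bound precisely when $1+\|P\|\ll H(\alpha)$, i.e.\ when the cofactor of $p$ in $P+R$ has bounded height; that is exactly the situation in the cases you settled and in the paper's application (Corollary \ref{intro:cor2}, where $P+R$ is $T^{3-e}$ times the minimal polynomial).

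The genuine gap is the case you flag yourself, $\deg\alpha=4$, $\deg R=5$, and your sketch does not close it. Writing $P+R=pg$ with $g=g_1T+g_0\in\bZ[T]$, the three coefficient identities only give $|g_1|\le\|R\|$, $|g_0|\ll 1+H(\alpha)^{1/2}$ and $\|P\|\ll H(\alpha)^{3/2}$ (constants depending on $R$), and these bounds are attained: for $R=T^5$ take $g=T+m$ and $p=T^4-mT^3+m^2T^2+a_1T+a_0$ with $a_0,a_1$ bounded and chosen so that $p$ is irreducible; then $pg=T^5+(a_1+m^3)T^2+(a_0+a_1m)T+a_0m$, so $H(\alpha)\asymp m^2$ while $\|g\|=m\asymp H(\alpha)^{1/2}$ and $\|P\|\asymp m^3\asymp H(\alpha)^{3/2}$. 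Feeding these into Theorem \ref{intro:thmPR} applied to $P+R$ and dividing by $|g(\xi)|\ll\|g\|$ gives only $|p(\xi)|\gg H(\alpha)^{-(3\gamma+1)/2}$, hence $|\xi-\alpha|\gg H(\alpha)^{-3(\gamma+1)/2}$, which falls short of the required exponent $-\gamma-1$; no choice of bookkeeping in your scheme can do better, because every route you use passes through $(1+\|P\|)^{-\gamma}$ and $\|P\|$ is genuinely of order $H(\alpha)^{3/2}$ here. So in this configuration the corollary does not follow from Theorem \ref{intro:thmPR} by your argument (nor by the naive one-line deduction); handling it requires some additional input exploiting the special shape of $p$ forced by the identities (e.g.\ $p=T^2q(T)+a_1T+a_0$ with $q\cdot g$ of bounded height), which neither your proposal nor the paper spells out.
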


Upon choosing $R(T)=T^3$ or $R(T)=T^4$, this gives in particular:

\begin{corollary}
\label{intro:cor2}
Let $\xi$ be as in Theorem \ref{intro:thmPR}.  For any algebraic integer $\alpha$ having degree at most $3$ or degree $4$ and trace $0$, we have $|\xi-\alpha| \ge c H(\alpha)^{-\gamma-1}$ with $c=c(\xi)>0$.
\end{corollary}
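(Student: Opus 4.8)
The plan is to deduce Corollary \ref{intro:cor2} directly from Corollary \ref{intro:cor1}, by writing the given algebraic integer $\alpha$ as a root of a sum $P+R$ of the required shape with $R$ chosen to be a \emph{fixed} polynomial, so that the constant furnished by Corollary \ref{intro:cor1} depends only on $\xi$.

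First I would treat the case where $\alpha$ is an algebraic integer of degree $d'\le 3$. Letting $m_\alpha\in\bZ[T]$ be its monic minimal polynomial, I would set $Q(T)=T^{3-d'}m_\alpha(T)$, a monic polynomial in $\bZ[T]$ of degree exactly $3$ having $\alpha$ as a root, and then write $Q=R+P$ with $R(T)=T^3$ and $P=Q-T^3$ (so that $\deg P\le 2$). Corollary \ref{intro:cor1}, applied to this pair and to the root $\alpha$ of $P+R$, then gives $|\xi-\alpha|\ge c\,H(\alpha)^{-\gamma-1}$; since $R=T^3$ does not depend on $\alpha$, the resulting constant $c=c(\xi,T^3)$ depends only on $\xi$.

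Next I would treat the case where $\alpha$ has degree $4$ and $\trace(\alpha)=0$. Here the minimal polynomial of $\alpha$ has the form $m_\alpha(T)=T^4-\trace(\alpha)\,T^3+c_2T^2+c_1T+c_0=T^4+c_2T^2+c_1T+c_0$ with $c_0,c_1,c_2\in\bZ$, so I would write $m_\alpha=R+P$ with $R(T)=T^4$ (of degree $4\in\{3,4,5\}$) and $P(T)=c_2T^2+c_1T+c_0$ (of degree $\le 2$). Again $\alpha$ is a root of $P+R$, and Corollary \ref{intro:cor1} yields $|\xi-\alpha|\ge c\,H(\alpha)^{-\gamma-1}$ with $c=c(\xi,T^4)$ depending only on $\xi$. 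Taking the minimum of the two constants obtained finishes the argument.

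I do not expect a genuine obstacle here: the substance lies entirely in Theorem \ref{intro:thmPR} and Corollary \ref{intro:cor1}. The only points requiring a word of care are that every algebraic integer of degree $\le 3$ is indeed a root of some monic cubic in $\bZ[T]$ (obtained by multiplying its minimal polynomial by an appropriate power of $T$), that vanishing of the trace is exactly the condition that kills the $T^3$-coefficient of the quartic minimal polynomial, and that $P+R$ is nonzero in both cases (it has degree $3$ or $4$), so that Corollary \ref{intro:cor1} genuinely applies. The role of fixing $R$ to be $T^3$ or $T^4$ is precisely to make the $R$-dependence of the constant in Corollary \ref{intro:cor1} disappear, leaving a constant depending only on $\xi$.
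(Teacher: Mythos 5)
Your proposal is correct and coincides with the paper's own (one-line) argument: the paper deduces Corollary \ref{intro:cor2} from Corollary \ref{intro:cor1} precisely by fixing $R(T)=T^3$ or $R(T)=T^4$, so that the constant $c(\xi,R)$ depends only on $\xi$. Your additional bookkeeping (multiplying the minimal polynomial by $T^{3-d'}$ in the degree $\le 3$ case, and noting that trace $0$ kills the $T^3$-coefficient of the quartic) is exactly the implicit content of that step.
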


Extending \cite[Thm.~1.1]{RcubicII}, Corollary \ref{intro:cor2} shows that, for each Markoff extremal number, both the first result of Davenport and Schmidt mentioned in the introduction and the analogous result of approximation by algebraic integers of degree at most $4$ and trace $0$ are optimal up to the value of the constant $c$.  It is possible that this is also true of the result of Teuli\'e about approximation by algebraic units of degree at most $4$, but we did not consider that case.  However the next result shows that Theorem \ref{intro:thmPR} curiously does not extend to degree $d=6$.

\begin{theorem}
\label{intro:thmdeg6}
Let $\xi$ be as in Theorem \ref{intro:thmPR}.  There are infinitely many algebraic numbers $\alpha$ which are roots of polynomials of the form $2T^6+a_2T^2+a_1T+a_0$ with $a_0,a_1,a_2\in\bZ$ and satisfy
\[
 |\xi-\alpha| \le c H(\alpha)^{-\gamma-1}(\log \log H(\alpha))^{-1}
\]
for a constant $c=c(\xi)>0$.
\end{theorem}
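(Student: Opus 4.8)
The plan is to exhibit the required polynomials explicitly by combining two copies of the quadratic approximations $\alpha_k$ attached to $\xi$. Recall that an extremal number $\xi$ carries a sequence of best quadratic approximations $(\alpha_k)_{k\ge 1}$, each of height $H_k:=H(\alpha_k)$, with $\log H_{k+1}/\log H_k\to\gamma$ and $|\xi-\alpha_k|\asymp H_k^{-\gamma-1}$; moreover the minimal polynomial of $\alpha_k$, say $Q_k(T)=q_k T^2+\cdots\in\bZ[T]$ with $q_k\asymp H_k$, satisfies $|Q_k(\xi)|\asymp H_k^{-\gamma}$, and the conjugate $\bar\alpha_k$ tends to one of $\xi\pm 3$. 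Since $\xi$ is a Markoff extremal number with conjugates $\xi\pm 3$, both values $\xi+3$ and $\xi-3$ occur as accumulation points of $(\bar\alpha_k)$; pass to a subsequence along which the signs are arranged so that $\alpha_k+\bar\alpha_k\to 2\xi$ (equivalently $\trace(Q_k)/q_k\to -2\xi$ up to sign). The key algebraic idea: for such a $k$, the product polynomial $Q_k(T)^3$ has degree $6$, leading coefficient $q_k^3$, and a double root pattern; I instead want the polynomial whose leading coefficient is exactly $2$, so I will work with a suitable integer combination. Concretely, set $R_k(T)$ to be an explicit cubic-in-$Q_k$ expression, or more simply take the polynomial
\[
 P_k(T) \;=\; 2T^6 + a_{2,k}T^2 + a_{1,k}T + a_{0,k}
\]
obtained by forcing the top part $2T^6$ and choosing $a_{2,k},a_{1,k},a_{0,k}\in\bZ$ to make $P_k$ agree to high order with $2(T-\alpha_k)^3(T-\bar\alpha_k)^3$ near $T=\xi$; since the latter equals $2Q_k(T)^3/q_k^3$, which is not integral, the role of the free coefficients $a_{i,k}$ is to absorb the non-integral low-degree part while the unavoidable error from rounding is what produces the $\log\log H$ loss.

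The heart of the argument is an estimate for $P_k(\xi)$. Write $\xi-\alpha_k=\varepsilon_k$ with $|\varepsilon_k|\asymp H_k^{-\gamma-1}$ and $\xi-\bar\alpha_k = (\xi+3)$ or $(\xi-3)$ up to $o(1)$; in particular $(\xi-\bar\alpha_k)$ is bounded away from $0$ and $\infty$. Then $2(\xi-\alpha_k)^3(\xi-\bar\alpha_k)^3 \asymp \varepsilon_k^3 \asymp H_k^{-3(\gamma+1)}$. The first task is to choose the integers $a_{2,k},a_{1,k},a_{0,k}$ so that $|P_k(\xi)|$ is as small as this main term — but here is the obstruction: $2Q_k(T)^3$ has leading coefficient $2q_k^3$, not $2$, so one cannot simply clear denominators. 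The fix is to use the quadratic relation satisfied by $\xi$ modulo $Q_k$: since $q_k\xi^2 \equiv -(\text{lower terms}) \pmod{Q_k(\xi)}$ with an error $O(|Q_k(\xi)|)=O(H_k^{-\gamma})$, one can reduce the high powers $\xi^6,\xi^5,\dots$ step by step, each reduction introducing a controlled error. After reducing $2\xi^6$ down to a quadratic polynomial in $\xi$ with rational coefficients of denominator dividing $q_k^{2}$ or so, rounding those coefficients to nearest integers costs an error of size $O(\max(1,\xi,\xi^2)) \cdot (\text{something}) \asymp H_k^{0}$ times the denominator bound — this is too crude. The genuine mechanism, and the reason the theorem is stated with $\log\log$ rather than a clean power, is subtler: one does \emph{not} reduce modulo a single $Q_k$ but telescopes through the continued-fraction-like recursion $Q_{k+1}\sim Q_k Q_{k-1}$ (or the Markoff recursion for these extremal numbers), so that the denominator is built up multiplicatively and $\log$ of it grows linearly in $k$; the number of reduction steps needed to descend from $\xi^6$ is $O(k)$, i.e. $O(\log\log H_k)$ in terms of $H_k$ because $\log\log H_k \asymp k\log\gamma$.

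So the key steps, in order, are: (i) recall and fix notation for the best quadratic approximations $\alpha_k$, their heights $H_k$, the minimal polynomials $Q_k$, the estimates $|Q_k(\xi)|\asymp H_k^{-\gamma}$, $|\xi-\alpha_k|\asymp H_k^{-\gamma-1}$, and the recursion relating $Q_{k-1},Q_k,Q_{k+1}$, citing \cite{RcubicI, Rmarkoff}; (ii) use the hypothesis that the conjugates are $\xi\pm3$ (via \cite[Lemma 4.3]{Rmarkoff} and \cite[Prop.~4.9]{Rmarkoff}) to select a subsequence with $\xi-\bar\alpha_k$ bounded and bounded away from zero; (iii) construct $P_k(T)=2T^6+a_{2,k}T^2+a_{1,k}T+a_{0,k}$ by iteratively reducing $2T^6$ modulo the $Q_j$'s for $j$ running down an appropriate range, keeping the remainder's coefficients integral and tracking that the accumulated error in $P_k(\xi)$ is $O\bigl(H_k^{-3(\gamma+1)}\log\log H_k\bigr)$ — up to constants, $O(k)$ times the main term $H_k^{-3(\gamma+1)}$; (iv) bound the height: show $\|P_k\| \ll H_k^{\,3}$ (the $a_{i,k}$ are built from products of a bounded number of the $q_j$, whose product is $\asymp q_k \asymp H_k$, raised to power $\le 3$), hence $H(\alpha)\le\|P_k\|\ll H_k^3$ for any root $\alpha$ of $P_k$; (v) locate a root $\alpha=\alpha_{(k)}$ of $P_k$ near $\xi$: since $P_k(\xi)$ is small and $P_k'(\xi)\asymp 1$ (the derivative of $2(T-\alpha_k)^3(T-\bar\alpha_k)^3$ at $\xi$ is dominated by $6\varepsilon_k^2(\xi-\bar\alpha_k)^3$, which is small, so actually one must be careful — $P_k$ has a near-triple root at $\alpha_k\approx\xi$, giving $|\xi-\alpha|\asymp |P_k(\xi)/P_k^{(3)}(\xi)|^{1/3}\asymp H_k^{-(\gamma+1)}(\log\log H_k)^{-1/3}$); (vi) combine (iv) and (v): $|\xi-\alpha| \ll H_k^{-(\gamma+1)}(\log\log H_k)^{-1/3} \ll H(\alpha)^{-(\gamma+1)/3}\cdot(\cdots)$ — wait, the exponent must come out to $-\gamma-1$, which forces $H(\alpha)\asymp H_k$ not $H_k^3$, so in fact the construction must be arranged so that $\alpha$ has height comparable to $H_k$, meaning the near-triple-root structure is the point: $\alpha$ is close to $\xi$ and its minimal polynomial has small height precisely because $P_k$ factors (approximately) through a small-height quadratic. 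I expect the main obstacle to be exactly this bookkeeping — getting the height of the minimal polynomial of the chosen root $\alpha$ down to $\asymp H_k$ (equivalently, controlling the content and the factorization of $P_k$) while simultaneously keeping $|P_k(\xi)|\asymp H_k^{-3(\gamma+1)}\log\log H_k$, so that the ratio lands on $H(\alpha)^{-\gamma-1}(\log\log H(\alpha))^{-1}$. This is where the specific recursion for Markoff extremal numbers, and the exact power $\gamma^{d+7}$ phenomena from Theorem \ref{intro:thmPR}, must be used to see that degree $6$ is the threshold and that the loss is genuinely only a double logarithm.
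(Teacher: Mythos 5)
Your construction does not close, and you have in fact flagged the fatal points yourself. Reducing $2T^6$ modulo a single $Q_k$ (or modulo a telescoping chain of $Q_j$'s) and rounding the resulting rational coefficients to integers costs an error of order $1$ per coefficient unless one already knows fine information about the fractional parts involved; accumulating $O(k)$ such reductions does not produce a polynomial with $|P_k(\xi)|$ anywhere near $H_k^{-3(\gamma+1)}$, and the $(\log\log H)^{-1}$ loss in the theorem has nothing to do with a count of reduction steps. Moreover your root analysis is inconsistent with the target: a polynomial modelled on $2(T-\alpha_k)^3(T-\bar\alpha_k)^3$ has a near-triple root at $\xi$, so the nearby root satisfies $|\xi-\alpha|\asymp|P_k(\xi)|^{1/3}$, and as you observe in steps (v)--(vi) the exponents then cannot come out to $-\gamma-1$ (nor can the height be brought down from $H_k^3$ to $H_k$). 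The proposal ends by postponing exactly this bookkeeping, which is where the actual content lies, so there is a genuine gap rather than a fixable detail.

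The paper's mechanism is different and hinges on three ingredients absent from your sketch. First, the determinant identity $x_{k-1,0}Q_k(T)-x_{k,0}Q_{k+1}(T)+x_{k+1,0}Q_{k-1}(T)=-2(-1)^k$ of Lemma \ref{lemmaQ} (where $Q_j(T)=\det(\uT,\ux_j,\ux_{j+1})$, not the minimal polynomial of a best quadratic approximation) allows one to set $P_k(T)=2T^6+(-1)^k\bigl(s_{k-1}Q_k(T)-s_kQ_{k+1}(T)+s_{k+1}Q_{k-1}(T)\bigr)$ with $s_j$ the nearest integer to $x_{j,0}\xi^6$; rewriting $P_k$ as $2(T^6-\xi^6)$ plus a combination whose coefficients are the errors $s_j-x_{j,0}\xi^6$ gives $\|P_k\|\ll\{x_{k,0}\xi^6\}X_k+X_{k-1}$, $|P_k'(\xi)|\gg\{x_{k,0}\xi^6\}X_k$ and $|P_k(\xi)|\ll\{x_{k+1,0}\xi^6\}X_{k+1}^{-1}$, so the relevant root is \emph{simple}, not triple, and $|\xi-\alpha_k|\ll|P_k(\xi)|/|P_k'(\xi)|$. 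Second, one needs the upper bound $\{x_{k,0}\xi^6\}\le c/k$ for infinitely many $k$ (Proposition \ref{prop:deg6:pfrac}), which is the heart of the matter: it requires showing that the accumulation points of $(x_{k+6,0}-x_{k,0})\xi^6$ modulo $\bZ$ are irrational (via explicit rational approximations and Roth's theorem, Theorem \ref{thm:deg6:trans}) and then an inhomogeneous approximation argument using Minkowski's second theorem and Jarn\'{\i}k's covering result. Third, one needs the lower bound $\{x_{k,0}\xi^6\}\gg k^{-2\gamma^7}$ (Proposition \ref{prop:deg6:lowerbound}) to keep $|P_k'(\xi)|$ comparable to $\|P_k\|$. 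The double logarithm then enters only because $X_k$ grows doubly exponentially, so $k\gg\log\log H(\alpha_k)$. None of this Diophantine input is supplied, or even approximated, by your reduction-and-rounding scheme.
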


By a cleaver application of the effective subspace theorem, B.~Adamczewski and Y.~Bugeaud recently established a measure of transcendence that applies to any extremal number \cite[Thm.~5.5]{AB}.  They showed that, given $\xi$ extremal, there exists a constant $c=c(\xi)>0$ such that, for any integer $d\ge 1$ and any non-zero polynomial $R\in\bZ[T]$ of degree at most $d$, we have
\begin{equation}
\label{intro:eq:AB}
 |R(\xi)| \ge \|R\|^{-\exp\{c(\log 3d)^2(\log \log 3d)^2\}}.
\end{equation}
For $d=1$ and $d=2$, completely explicit estimates of this sort follow respectively from Theorems 1.3 and 1.4 of \cite{RcubicI}.  In higher degree, nothing is known besides \eqref{intro:eq:AB}.  Here, by restriction to the present smaller class of extremal numbers, we obtain:

\begin{theorem}
\label{intro:thmR}
Let $\xi$ be as in Theorem \ref{intro:thmPR}. For any polynomial $R(T)\in\bZ[T]$ of degree $d\le 6$, we have
\[
 |R(\xi)| \gg
  \begin{cases}
    \|R\|^{-\gamma^3} &\ \text{if\, $d\le 3$,}\\[3pt]
    \|R\|^{\gamma^2-2\gamma^d} &\ \text{if\, $4\le d\le 6$.}
  \end{cases}
\]
\end{theorem}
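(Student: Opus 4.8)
The plan is to derive Theorem \ref{intro:thmR} as a consequence of Theorem \ref{intro:thmPR} by splitting off the low-degree part of $R$. Given $R\in\bZ[T]$ of degree $d$ with $4\le d\le 6$, write $R=R'+P$ where $P\in\bZ[T]$ collects the monomials of degree $\le 2$ and $R'\in\bZ[T]$ collects the monomials of degree $3$ through $d$. Then $R(\xi)=P(\xi)+R'(\xi)$ and both $\|P\|\le\|R\|$ and $\|R'\|\le\|R\|$. The idea is to apply Theorem \ref{intro:thmPR} to the pair $(P,R')$, but there is a mismatch to handle: Theorem \ref{intro:thmPR} is stated for $R'$ of \emph{exact} degree $d$, whereas after splitting off $P$ the polynomial $R'$ could a priori have a vanishing leading coefficient down to degree $3$. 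One treats the cases $\deg(R')=3,4,5$ by invoking Theorem \ref{intro:thmPR} directly, and the case $\deg(R')=6$ by the same argument together with an independent estimate for the degree-$6$ situation that I would establish alongside (essentially re-running the proof of Theorem \ref{intro:thmPR} with $d=6$ to get $|P(\xi)+R'(\xi)|\gg(1+\|P\|)^{-\gamma}\|R'\|^{-\gamma^{13}}$, or whatever exponent the method yields; the displayed exponent $2\gamma^6-\gamma^2$ in the statement suggests a cleaner bound is actually available and this is the delicate point — see below).

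First I would dispose of the trivial range $d\le 3$. If $\deg(R)\le 2$ then $|R(\xi)|\gg\|R\|^{-\gamma^3}$ follows from the (stronger) known explicit estimates of \cite{RcubicI} for degrees $1$ and $2$, or simply from Theorem \ref{intro:thmPR} applied with a monomial $R$-part and $P=0$ once one checks the boundary cases. If $\deg(R)=3$, apply Theorem \ref{intro:thmPR} with the given $R$ (degree exactly $3$) and $P=0$: this yields $|R(\xi)|\gg\|R\|^{-\gamma^4}$, but we want the stronger $\|R\|^{-\gamma^3}$. So for $d=3$ I would instead split $R=R'+P$ with $R'$ the leading cubic term $a_3T^3$ and $P$ the rest, use $|a_3\xi^3|\gg 1$ together with the case $\theta=\xi^3$ optimality from \cite[Thm.~6.2]{Rdio} — or, more uniformly, observe that Theorem \ref{intro:thmPR} with $d=3$ applied to $(P,a_3T^3)$ gives $|R(\xi)|\gg(1+\|P\|)^{-\gamma}|a_3|^{-\gamma^4}$ and then play $\|P\|$ against $|a_3|$: either $\|P\|\le\|R\|$ is small, in which case $|a_3|$ is comparable to $\|R\|$ and we use the sharper exponent coming from the structure of the cubic case, or $\|P\|$ is comparable to $\|R\|$ and we use a degree-$2$ estimate for $P(\xi)$ plus $|a_3\xi^3|\gg 1$. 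Working this optimization out is where the exponent $\gamma^3$ (rather than $\gamma^4$) is extracted; the "special care" for $d=3$ mentioned after Theorem \ref{intro:thmPR} is exactly what makes this available.

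For the main range $4\le d\le 6$, the argument is: set $P$ = degree-$\le 2$ part of $R$, $R'$ = the rest, so $\|P\|,\|R'\|\le\|R\|$ and $R(\xi)=P(\xi)+R'(\xi)$. If $\deg(R')\in\{4,5\}$, Theorem \ref{intro:thmPR} gives $|R(\xi)|\gg(1+\|P\|)^{-\gamma}\|R'\|^{-\gamma^{d'+7}}\gg\|R\|^{-\gamma-\gamma^{d'+7}}$ where $d'=\deg(R')\le d$. If $\deg(R')=3$, the $d=3$ case of Theorem \ref{intro:thmPR} gives $|R(\xi)|\gg\|R\|^{-\gamma-\gamma^4}$. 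If $\deg(R')=6$ (possible only when $d=6$), we need the degree-$6$ analogue; I would prove it by the same geometry-of-numbers machinery underlying Theorem \ref{intro:thmPR}. In every case one checks the resulting exponent is at most $2\gamma^d-\gamma^2$: since $\gamma^2=\gamma+1$, the claimed bound is $\|R\|^{-(2\gamma^d-\gamma-1)}$, and one verifies $\gamma+\gamma^{d+7}\le 2\gamma^d-\gamma-1$ for $d=4,5$ and the degree-$6$ exponent $\le 2\gamma^6-\gamma-1$, using $\gamma^{n+1}=\gamma^n+\gamma^{n-1}$ to compare powers; these are finitely many numerical inequalities. The main obstacle is the $\deg(R')=6$ sub-case: Theorem \ref{intro:thmPR} explicitly excludes $d=6$ (indeed Theorem \ref{intro:thmdeg6} shows no clean analogue with the exponent $\gamma$ on $1+\|P\|$ can hold), so one cannot quote it as a black box. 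The resolution is that here $P$ is genuinely absent in the worst case — we are really bounding $|R'(\xi)|$ for $R'$ of degree exactly $6$ — and the obstruction in Theorem \ref{intro:thmdeg6} came from allowing a nontrivial quadratic part $P$ to conspire with the degree-$6$ monomial; with the full polynomial $R$ of degree $6$ we instead run the duality/heights argument to get a (weaker, but still polynomial) lower bound $|R(\xi)|\gg\|R\|^{-(2\gamma^6-\gamma^2)}$ directly, and this is what forces the exponent stated for $d=6$ to be larger than the naive $\gamma+\gamma^{13}$ one might hope for by analogy with $d=4,5$.
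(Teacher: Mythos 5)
There is a genuine gap, and it is quantitative: Theorem \ref{intro:thmR} for $4\le d\le 6$ cannot be deduced from Theorem \ref{intro:thmPR} by splitting $R=R'+P$, because the exponents go the wrong way. The target exponent is $2\gamma^d-\gamma^2$, which for $d=4$ equals $5\gamma+3\approx 11.1$, while your reduction yields at best $|R(\xi)|\gg\|R\|^{-\gamma-\gamma^{d+7}}$, and $\gamma+\gamma^{11}=90\gamma+55\approx 200.6$. So the inequality you claim to ``verify'', namely $\gamma+\gamma^{d+7}\le 2\gamma^d-\gamma-1$ for $d=4,5$, is false by a wide margin; Theorem \ref{intro:thmR} in degrees $4$--$6$ is strictly \emph{stronger} than anything obtainable by quoting Theorem \ref{intro:thmPR} as a black box. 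The same problem occurs for $d=3$: splitting off the cubic term and invoking Theorem \ref{intro:thmPR} gives at best the exponent $\gamma+\gamma^4=4\gamma+2\approx 8.5$ in the worst regime $|a_3|\asymp\|P\|\asymp\|R\|$, not the claimed $\gamma^3=2\gamma+1\approx 4.2$, and your proposed optimization between $\|P\|$ and $|a_3|$ does not close that gap. Finally, for the sub-case $\deg(R')=6$ you explicitly defer to an unspecified ``duality/heights argument'', which is precisely the missing content.

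The paper's proof runs along a different and more direct route. For $4\le d\le 6$ it is Theorem \ref{thmR}: one applies Lemma \ref{lemmaM}, which gives the lower bound $\{x_{k+d-2,0}R(\xi)\}\gg X_kX_{k+d-2}^{-1}$ at the single scale $k$ determined by $X_{k-2}\le c_1\|R\|<X_{k-1}$; this rests on the gcd estimate of Proposition \ref{propGCD} (that $\gcd(x_{k,0},A_k)\in\{1,2\}$) together with the congruences of Lemma \ref{prelim:lemmaX} (i), (ii) (giving $m_1=2$, $m_2=6$) and, crucially for $d=6$, Lemma \ref{lemma:X6} (ii) (giving $m_3=20$) --- exactly the identity your sketch leaves blank. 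Note that Lemma \ref{lemmaM} needs only one well-chosen $k$ with $X_{k-1}\gg\|R\|$, which is why the exponent $2\gamma^d-\gamma^2$ is so much better than a composition through Theorem \ref{intro:thmPR}. For $d\le 3$ the relevant statement is Theorem \ref{thmR3}, whose proof splits according to whether $R$ is divisible in $\bQ[T]$ by some $Q_k$: if not, a Gel'fond-type determinant argument (Lemma 4 of \cite{AR}) with $Q_j$, $Q_{j+1}$ together with Lemma \ref{lemmaxR} gives the stronger bound $\|R\|^{-1-\gamma^2}$; if so, one factors $R=(1/2)Q_kL$ with $L$ linear and uses $|Q_k(\xi)|\gg\|Q_k\|^{-\gamma^3}$ and the degree-one measure of \cite{RcubicI}. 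None of these ingredients appears in your proposal, so as written it does not prove the theorem.
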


There are evidences that the second estimate extends at least up to $d=9$ with the elementary method of proof that we develop in \S4.  If it holds for each $d\ge 4$, this would improve on \eqref{intro:eq:AB} for these numbers $\xi$.  However, it is likely that a much stronger estimate applies.  The question is open.

The basic strategy of proof in this paper is essentially the same as that of \cite{RcubicII}.  It is inspired by \cite[Prop.~9.1]{RcubicI} (or \cite[Prop.~2.2]{RcubicII}) and makes extensive use of the algebraic and arithmetic properties of certain sequences of integer matrices attached to Markoff extremal numbers (see the next section). Each of our main results stated above requires showing that certain sequences of positive real numbers in the interval $(0,1)$ are bounded away from $0$.  This is done by showing first that the sequences in question admit finitely many accumulation points and then that these accumulation points are non-zero.  The last step is the most delicate one.  In \cite{RcubicII} this is achieved by comparison with a weaker lower bound that applies to any extremal number (see \cite[Prop.~9.2]{RcubicI}), but this type of argument is difficult to generalize.  Here we establish that the accumulation points are non-zero, by showing basically that they admit too good rational approximations to be themselves rational numbers.  Section \ref{sec:arith} provides the key arithmetic result needed to complete this program.  It also illustrates our strategy with the proof of a qualitative version of Theorem \ref{intro:thmPR} in degree $d=3$.  Section \ref{sec:deg3} proves Theorems \ref{intro:thmPR} and \ref{intro:thmR} for $d\le 3$, while Section \ref{sec:deg4-5} does it for $d=4$ and $d=5$.  The last section \ref{sec:deg6}, inspired by \cite[Prop.~10.1]{RcubicII}, is devoted to the proof of Theorem \ref{intro:thmdeg6}.  It also completes the proof of Theorem \ref{intro:thmR} for $d=6$ and shows additional surprising properties of Markoff extremal numbers which suggest interesting avenues for further research.

%
%

\section{Preliminaries}
\label{sec:prelim}

Define the norm $\|A\|$ of a matrix $A$ with real coefficients as the maximum of the absolute values of its coefficients.  All computations in this paper ultimately rely on the following fact.

\begin{theorem}
\label{prelim:thm}
A real number $\xi$ is a Markoff extremal number with conjugates $\xi-3$ and $\xi+3$ if and only if there exists an unbounded sequence of symmetric matrices $(\ux_k)_{k\ge 1}$ in $\SL_2(\bZ)$ such that, for each $k\ge 1$, we have
\begin{equation}
\label{prelim:thm:eq1}
 \|\ux_{k+1}\|\asymp \|\ux_k\|^\gamma, \quad
 \|(\xi,-1)\ux_k\| \ll \|\ux_k\|^{-1},
\end{equation}
with implied constants that do not depend on $k$, and
\begin{equation}
\label{prelim:thm:eq2}
 \ux_{k+2} = \ux_{k+1}M_{k+1}\ux_k = \ux_k M_k \ux_{k+1}
 \quad \text{where}\quad
 M_k=\begin{pmatrix}3&(-1)^k\\(-1)^{k+1}&0\end{pmatrix}.
\end{equation}
\end{theorem}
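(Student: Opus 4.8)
The plan is to derive the statement from the correspondence between extremal numbers and sequences of symmetric matrices set up in \cite{RcubicI}, combined with the study of Markoff extremal numbers in \cite{Rmarkoff}; within that correspondence one has to check that the extra requirements ``$\nu(\xi)=1/3$ and conjugates $\xi\pm3$'' match precisely the conditions that the multipliers of the recursion have the form displayed in \eqref{prelim:thm:eq2} and that the matrices $\ux_k$ lie in $\SL_2(\bZ)$ rather than just in $\GL_2(\bZ)$.

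\emph{Necessity.} Assume $\xi$ is a Markoff extremal number with conjugates $\xi\pm3$. By \cite{RcubicI}, the canonical sequence of minimal points of $\xi$ assembles into an unbounded sequence of symmetric matrices $(\ux_k)_{k\ge1}$ in $\GL_2(\bZ)$ satisfying the two estimates in \eqref{prelim:thm:eq1} together with a recursion $\ux_{k+2}=\ux_{k+1}M_{k+1}\ux_k$ in which the $M_k$ are integer matrices of bounded norm. Since $\ux_{k+2}$ is symmetric, transposing this identity also gives $\ux_{k+2}=\ux_k M_{k+1}^{\mathrm t}\ux_{k+1}$, so, setting $M_k=M_{k+1}^{\mathrm t}$, we obtain both forms of \eqref{prelim:thm:eq2} and the relation $M_k^{\mathrm t}=M_{k+1}$; in particular the $M_k$ take at most two values, swapped by transposition. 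It remains to identify these values, and here I would invoke the description of the conjugates of an extremal number in \cite[Prop.~4.9]{Rmarkoff} and the determination of those whose conjugates are $\xi\pm3$ that underlies \cite[Lemma~4.3]{Rmarkoff}: the hypothesis $\nu(\xi)=1/3$ forces $\trace M_k=3$, the requirement on the conjugates forces $\det M_k=1$, and the relation $M_k^{\mathrm t}=M_{k+1}$ then fixes the off-diagonal entries as $(-1)^k$ and $(-1)^{k+1}$. The same analysis shows that the determinants $\det\ux_k$, which by the recursion and $\det M_k=1$ satisfy $\det\ux_{k+2}=\det\ux_{k+1}\det\ux_k$, are all equal to $+1$, so $\ux_k\in\SL_2(\bZ)$. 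Discarding finitely many initial terms if necessary yields a sequence with all the required properties.

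\emph{Sufficiency.} Conversely, let $(\ux_k)_{k\ge1}$ be a sequence of symmetric matrices in $\SL_2(\bZ)$ satisfying \eqref{prelim:thm:eq1} and \eqref{prelim:thm:eq2}. Write the rows of $\ux_k$ as $(x_k,y_k)$ and $(y_k,z_k)$. The estimate $\|(\xi,-1)\ux_k\|\ll\|\ux_k\|^{-1}$ says that $x_k\xi-y_k$ and $y_k\xi-z_k$ (and hence $x_k\xi^2-z_k=\xi(x_k\xi-y_k)+(y_k\xi-z_k)$ as well) are $\ll\|\ux_k\|^{-1}$, so the triple $(x_k,y_k,z_k)$ solves \eqref{intro:eq:DSbis} with a fixed constant $c$ for every $X$ with $\|\ux_k\|\le X\le\|\ux_{k+1}\|$; by $\|\ux_{k+1}\|\asymp\|\ux_k\|^\gamma$ these ranges cover $[\,\|\ux_1\|,\infty)$. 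The growth condition also keeps consecutive $\ux_k$ non-proportional, and the recursion \eqref{prelim:thm:eq2} with $\det M_k=1$ provides the linear independence demanded by the criterion of \cite{RcubicI}; hence $\xi$ is neither rational nor quadratic and is an extremal number, of which $(\ux_k)$ is, up to its first terms and bounded factors, the canonical sequence. It then remains to read off the two arithmetic invariants from the recursion: since the matrices $M_k$ appearing in \eqref{prelim:thm:eq2} have trace $3$ and determinant $1$, the computation of the Lagrange constant $\nu(\xi)$ in \cite{Rmarkoff} yields $\nu(\xi)=1/3$, so $\xi$ is a Markoff extremal number, while the computation underlying \cite[Lemma~4.3]{Rmarkoff} identifies the two accumulation points of the sequence of conjugates $(\bar\alpha_k)$, which exist by \cite[Prop.~4.9]{Rmarkoff}, as $\xi-3$ and $\xi+3$.

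\emph{Main difficulty.} The crux in both directions is the exact determination of the multipliers $M_k$ and of the determinant pattern of the $\ux_k$: one must translate the analytic normalization ``$\nu(\xi)=1/3$, conjugates $\xi\pm3$'' into the algebraic data ``$\trace M_k=3$, $\det M_k=1$, $\det\ux_k=+1$, and off-diagonal entries of $M_k$ alternating with $k$'', and conversely read the analytic data off the algebraic one. This relies on the combinatorial backbone of Markoff's reduction theory as transported to $\SL_2(\bZ)$ in \cite{Rmarkoff}, and on a careful account of how the symmetry of the $\ux_k$ propagates through the recursion, which is precisely the source of the alternating sign $(-1)^k$ in \eqref{prelim:thm:eq2}.
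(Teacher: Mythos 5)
There is a genuine gap in your necessity direction, and two of its supporting steps are false as stated. First, from $\det\ux_{k+2}=\det\ux_{k+1}\det\ux_k$ with values in $\{\pm1\}$ you cannot conclude $\det\ux_k=+1$ for all $k$: the period-three pattern $-1,-1,+1,-1,-1,+1,\dots$ also satisfies that recursion. Second, the conditions $\trace M_k=3$, $\det M_k=1$ and $M_{k+1}=M_k^{\mathrm{t}}$ do not ``fix the off-diagonal entries as $(-1)^k$ and $(-1)^{k+1}$'': the symmetric matrix $\begin{pmatrix}1&1\\1&2\end{pmatrix}$ has trace $3$ and determinant $1$, and so do infinitely many other integer matrices. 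More fundamentally, the assertions that $\nu(\xi)=1/3$ forces $\trace M_k=3$ and that the conjugates $\xi\pm3$ force $\det M_k=1$ are precisely the non-trivial content; they cannot simply be ``invoked'', because the hypotheses on $\xi$ determine the matrix data attached to its best quadratic approximations only up to the action of $\GL_2(\bZ)$ (which is exactly why only a countable subfamily of Markoff extremal numbers has conjugates $\xi\pm3$). In particular, the canonical sequence of minimal points of $\xi$ need not itself satisfy \eqref{prelim:thm:eq2} in the displayed normal form.

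The paper proceeds by a different mechanism that your proposal never carries out: it defines $\cE_3^+$ as the set of numbers admitting a sequence as in the theorem, and for necessity it quotes \cite[Thm.~7.7]{Rmarkoff} to write $\xi=A\cdot\xi^*$ with $\xi^*\in\cE_3^+$ and $A\in\GL_2(\bZ)$, then uses the hypothesis on the conjugates together with \cite[Cor.~4.10]{Rmarkoff} to force $A$ to be upper triangular, so that $\xi=\pm\xi^*+b$ with $b\in\bZ$, and finally the stability of $\cE_3^+$ under these particular transformations (\cite[Lemma~3.5]{Rmarkoff}) to conclude $\xi\in\cE_3^+$. Without this reduction, neither the exact form of the $M_k$ nor the membership $\ux_k\in\SL_2(\bZ)$ follows. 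Your sufficiency sketch is closer to the paper's (which cites \cite[Prop.~3.1, Prop.~4.9, Thm.~3.6, Cor.~5.10]{Rmarkoff}), but there too the step ``trace $3$ and determinant $1$ imply $\nu(\xi)=1/3$'' is a leap: the paper instead shows that any element of $\cE_3^+$ is $\GL_2(\bZ)$-equivalent to a specific $\xi^*\in\cE_3^+$ with $\nu(\xi^*)=1/3$ and then uses the invariance of $\nu$ under this action.
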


The reader can, if he wishes, skip the proof given below which makes extensive use of the results of \cite{Rmarkoff}.  He could then take the above characterization has a working definition of the numbers that we consider in this paper.  The reason why we proceeded otherwise are simply aesthetical, although the abstract definition may suggest other approaches to the problem.  Again note that the class of numbers considered in \cite{RcubicII} is more general than the one described by the above theorem.  As \cite[Chap.~4]{Ze} suggests, most results that we prove here probably extend to that larger class of numbers but the computations would be more involved.

\begin{proof}
We first recall that the set of Markoff extremal numbers is stable under the action of $\GL_2(\bZ)$ on $\bR\setminus\bQ$ by linear fractional transformations since both the set of extremal numbers and the set of irrational numbers $\xi$ with $\nu(\xi)=1/3$ are stable under this action.

Let $\cE_3^+$ denote the set of real numbers $\xi$ for which there exists an unbounded sequence of symmetric matrices $(\ux_i)_{i\ge 1}$ in $\SL_2(\bZ)$ satisfying \eqref{prelim:thm:eq1} and \eqref{prelim:thm:eq2} for each $i\ge 1$.  Then, according to \cite[Prop.~3.1]{Rmarkoff}, each element $\xi$ of $\cE_3^+$ is extremal and, in view of \cite[Def.~4.2 and Prop.~4.9]{Rmarkoff}, its conjugates are $\xi-3$ and $\xi+3$.  Moreover, by \cite[Thm.~3.6]{Rmarkoff}, such $\xi$ is equivalent under $\GL_2(\bZ)$ to some specific element $\xi^*$ of $\cE_3^+$ which, thanks to \cite[Cor.~5.10]{Rmarkoff}, has $\nu(\xi^*)=1/3$.  Thus, we also have $\nu(\xi)=1/3$.

Conversely, suppose that $\xi$ is a Markoff extremal number with conjugates $\xi-3$ and $\xi+3$.  By \cite[Thm.~7.7]{Rmarkoff}, the fact that $\xi$ is an extremal number with $\nu(\xi)=1/3$ implies that $\xi$ is the image $A\cdot\xi^*$ of some number $\xi^*\in\cE_3^+$ under the action of some matrix $A\in\GL_2(\bZ)$.  The conjugates of $\xi^*$ being $\xi^*-3$ and $\xi^*+3$, it follows from \cite[Cor.~4.10]{Rmarkoff} that those of $\xi$ are $A\cdot(\xi^*-3)$ and $A\cdot(\xi^*+3)$.  As we assume that the conjugates of $\xi$ are $\xi-3$ and $\xi+3$, this implies that $A$ is upper triangular and so $\xi=\pm \xi^*+b$ for some integer $b$ and some choice of sign $\pm$.  By \cite[Lemma 3.5]{Rmarkoff}, we conclude that $\xi\in\cE_3^+$.
\end{proof}

\medskip
\begin{notation}
For the rest of this paper, we fix an extremal number $\xi$ as in the statement of Theorem \ref{prelim:thm}, and a corresponding sequence of symmetric matrices $(\ux_k)_{k\ge 1}$.  We write
\[
 \begin{gathered}
 J=\begin{pmatrix} 0&1\\ -1&0\end{pmatrix},
   \quad
 P=\begin{pmatrix} 3&0\\ 0&0\end{pmatrix},
   \quad
 M_k=\begin{pmatrix} 3&(-1)^k\\ (-1)^{k+1}&0\end{pmatrix} = P+(-1)^kJ,\\
 \ux_k=\begin{pmatrix} x_{k,0}&x_{k,1}\\ x_{k,1}&x_{k,2}\end{pmatrix}
 \et
 X_k = \|\ux_k\| := \max\{|x_{k,0}|,\,|x_{k,1}|,\,|x_{k,2}|\} \quad (k\ge 1).
 \end{gathered}
\]
\end{notation}

With this notation, the estimates \eqref{prelim:thm:eq1} become
\begin{equation}
\label{prelim:eq:basics}
 X_{k+1}\asymp X_k^\gamma
 \et
 x_{k,j} = x_{k,0}\xi^j + \cO(X_k^{-1}) \quad (j=0,1,2).
\end{equation}
In the sequel, we use intensively these estimates, often without reference to \eqref{prelim:eq:basics}.  In particular they imply that
\begin{equation}
\label{prelim:hyp}
 X_k < X_{k+1}
 \et
 X_k < (1+\xi^2)|x_{k,0}|
\end{equation}
for each sufficiently large integer $k$.  By removing the first matrices of the sequence $(\ux_k)_{k\ge 1}$ if necessary, we shall assume that the inequalities \eqref{prelim:hyp} hold for each $k\ge 1$.  In connection with the first estimate of \eqref{prelim:eq:basics}, we also note that the equality $\gamma^2=\gamma+1$ implies that $X_{k+2}\asymp X_{k+1}X_k$ for each $k\ge 1$.  The next lemma gathers several consequences of the recurrence formulas \eqref{prelim:thm:eq2}.

\begin{lemma}
 \label{prelim:lemma:identities}
For each integer $k\ge 2$, we have
\begin{itemize}
 \item[(i)] $\ux_{k+2} = 3x_{k,0}\ux_{k+1} - \ux_{k-1}$,
 \smallskip
 \item[(ii)] $\ux_{k}J {\bf x}_{k+1} = J M_{k} \ux_{k-1}$,
 \smallskip
 \item[(iii)] $\ux_{k}J {\bf x}_{k+2} = J M_{k} \ux_{k+1}$,
 \smallskip
 \item[(iv)] $\ux_kJ\ux_{k+4} = JM_k\ux_{k+1}P\ux_{k+3} - (-1)^k \ux_{k+2}$.
\end{itemize}
\end{lemma}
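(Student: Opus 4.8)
The plan is to derive all four identities directly from the two recurrence relations in \eqref{prelim:thm:eq2}, namely $\ux_{k+2}=\ux_{k+1}M_{k+1}\ux_k=\ux_kM_k\ux_{k+1}$, together with the elementary matrix facts that each $\ux_k\in\SL_2(\bZ)$ is symmetric with $\det\ux_k=1$, and that for a $2\times2$ matrix $A$ one has $JA^{t}J^{-1}=(\det A)A^{-1}$, equivalently $A^{t}J A=(\det A)J$. For a symmetric $A$ this reads $AJA=(\det A)J$, so $\ux_kJ\ux_k=J$ for every $k$. I would also record the scalar identity $\ux_k J\ux_{k+1}=(\trace\text{-type expression})$; more precisely, since $\ux_kJ\ux_{k+1}$ is a $2\times2$ integer matrix whose determinant is $\det\ux_k\det J\det\ux_{k+1}=1$, and since one computes from \eqref{prelim:thm:eq2} that it is a scalar multiple of a fixed matrix, the key first step is to identify $\ux_kJ\ux_{k+1}$ explicitly.

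For (i): starting from $\ux_{k+2}=\ux_{k+1}M_{k+1}\ux_k$ and writing $M_{k+1}=P+(-1)^{k+1}J$, we get $\ux_{k+2}=\ux_{k+1}P\ux_k+(-1)^{k+1}\ux_{k+1}J\ux_k$. Now $\ux_{k+1}P\ux_k=3\ux_{k+1}E\ux_k$ where $E$ is the matrix with a single $1$ in the top-left corner; since $E$ has rank one, $E=e\,e^{t}$ with $e=(1,0)^{t}$, so $\ux_{k+1}P\ux_k=3(\ux_{k+1}e)(\ux_k e)^{t}$, and $\ux_k e$ is the first column of $\ux_k$, i.e. $(x_{k,0},x_{k,1})^{t}$. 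Matching this against the analogous expansion of $\ux_{k+1}=\ux_kM_k\ux_{k-1}$ (shifted) and using the relation for $\ux_kJ\ux_{k-1}$ from step one should collapse the expression to $3x_{k,0}\ux_{k+1}-\ux_{k-1}$. For (ii) and (iii): from $\ux_{k+1}=\ux_{k-1}M_{k-1}\ux_k$... more efficiently, use the \emph{other} form $\ux_{k+1}=\ux_kM_k\ux_{k-1}$ is wrong by index; rather $\ux_{k+1}=\ux_{k-1}M_{k-1}\ux_k=\ux_kM_k\ux_{k-1}$. Then $\ux_kJ\ux_{k+1}=\ux_kJ\ux_kM_k\ux_{k-1}=(\ux_kJ\ux_k)M_k\ux_{k-1}=JM_k\ux_{k-1}$, using $\ux_kJ\ux_k=J$. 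That is exactly (ii). Identity (iii) is proved the same way: $\ux_kJ\ux_{k+2}=\ux_kJ\ux_kM_k\ux_{k+1}=JM_k\ux_{k+1}$, using the form $\ux_{k+2}=\ux_kM_k\ux_{k+1}$.

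For (iv): iterate. Write $\ux_{k+4}=\ux_{k+2}M_{k+2}\ux_{k+3}$, so $\ux_kJ\ux_{k+4}=(\ux_kJ\ux_{k+2})M_{k+2}\ux_{k+3}=JM_k\ux_{k+1}M_{k+2}\ux_{k+3}$ by (iii). Now expand $M_{k+2}=P+(-1)^{k+2}J=P+(-1)^kJ$, giving $JM_k\ux_{k+1}P\ux_{k+3}+(-1)^kJM_k\ux_{k+1}J\ux_{k+3}$. For the second term, $\ux_{k+1}J\ux_{k+3}$ is handled by (iii) with $k$ replaced by $k+1$: $\ux_{k+1}J\ux_{k+3}=JM_{k+1}\ux_{k+2}$, so that term becomes $(-1)^kJM_kJM_{k+1}\ux_{k+2}$. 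It then remains to check the constant matrix identity $JM_kJM_{k+1}=-I$, i.e. $M_kJM_{k+1}=-J^{-1}=J$; this is a direct $2\times2$ computation using $M_k=\begin{pmatrix}3&(-1)^k\\(-1)^{k+1}&0\end{pmatrix}$, and it reduces (iv) to the stated form $JM_k\ux_{k+1}P\ux_{k+3}-(-1)^k\ux_{k+2}$.

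The only genuinely delicate point is the very first step — pinning down $\ux_kJ\ux_k=J$ (immediate from symmetry and $\det=1$) and, for (i), correctly bookkeeping the rank-one term $\ux_{k+1}P\ux_k$; everything else is a mechanical propagation of \eqref{prelim:thm:eq2} through the bilinear form $(A,B)\mapsto AJB$, together with the two small constant-matrix verifications $M_kJM_{k+1}=J$ and the identification of the first column of $\ux_k$. I expect the main obstacle to be purely organizational: keeping the parity of the $M_k$'s straight through the index shifts in (iv).
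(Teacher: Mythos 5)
Your proofs of (ii), (iii) and (iv) are correct and coincide with the paper's: you use $\ux_kJ\ux_k=J$ (the identity ${}^t\ux J\ux=J$ for $\ux\in\SL_2$ together with symmetry), push the recurrence \eqref{prelim:thm:eq2} through $(A,B)\mapsto AJB$ to get (ii) and (iii), and for (iv) split $M_{k+2}=P+(-1)^kJ$, apply (iii) with $k$ replaced by $k+1$, and finish with the constant-matrix identity $JM_kJM_{k+1}=-I$ (your equivalent formulation $M_kJM_{k+1}=J$ is fine).

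The only divergence is (i), and there your argument is not actually carried out. You expand $\ux_{k+2}=\ux_{k+1}P\ux_k+(-1)^{k+1}\ux_{k+1}J\ux_k$ and then assert that "matching" against a shifted expansion "should collapse" to $3x_{k,0}\ux_{k+1}-\ux_{k-1}$; that collapse is exactly the content of (i) and is nowhere verified. Moreover, the quantity you need for it is $\ux_{k+1}J\ux_k$, which your "step one" does not supply (it only gives $\ux_kJ\ux_k=J$); you would have to transpose (ii) to get $\ux_{k+1}J\ux_k={}\ux_{k-1}\,{}^tM_k J$, and then check that $3(\ux_{k+1}e)\,{}^t(\ux_ke)-3x_{k,0}\ux_{k+1}$ is the matrix supported on the second column with entries $3(-1)^kx_{k-1,0}$ and $3(-1)^kx_{k-1,1}$ (equivalently, the identities \eqref{prelim:eq:1,0,1} and \eqref{prelim:eq:1,0,2}), which matches $-\ux_{k-1}\bigl(I+(-1)^{k+1}\,{}^tM_kJ\bigr)$. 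This does work, but it forces (ii) to precede (i) and involves bookkeeping you have not done. The paper avoids all of it with one observation you missed: since $\ux_{k+2}=\ux_kM_k\ux_{k+1}=(\ux_kM_k)^2\ux_{k-1}$ and the matrix $\ux_kM_k$ has trace $3x_{k,0}$ and determinant $1$, Cayley--Hamilton gives $(\ux_kM_k)^2=3x_{k,0}\,\ux_kM_k-I$, and multiplying on the right by $\ux_{k-1}$ yields (i) at once.
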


\begin{proof}
As in the proof of \cite[Lemma 2.5]{RcubicII}, we note that, for $k\ge 2$, the recurrence relation \eqref{prelim:thm:eq2} leads to
$\ux_{k+2} = \ux_{k} M_{k} \ux_{k+1} = (\ux_{k}M_{k})^2 \ux_{k-1}$.
Applying Cayley-Hamilton's theorem to the matrix $\ux_{k}M_{k}$ with trace $3x_{k,0}$ and determinant $1$, we obtain :
\[
 \ux_{k+2} = (3x_{k,0}\ux_{k}M_{k}-I)\,\ux_{k-1} = 3x_{k,0}\ux_{k+1} -\ux_{k-1}.
\]
The next two formulas (ii) and (iii) follow directly from \eqref{prelim:thm:eq2} together with the identity $^t\ux J\ux = J$ valid for any matrix $\ux\in\SL_2(\bR)$. We find
\[
 \ux_{k}J {\bf x}_{k+1}
  = \ux_{k} J {\bf x}_{k}M_k \ux_{k-1}
  = J M_{k} \ux_{k-1}
 \et
 \ux_{k}J {\bf x}_{k+2}
  = \ux_{k} J {\bf x}_{k}M_k \ux_{k+1}
  = J M_{k} \ux_{k+1}.
\]
Finally, the last formula (iv) follows from the following computations
\[
 \begin{aligned}
 \ux_kJ\ux_{k+4}
  &= \ux_k J\ux_{k+2} M_{k+2} \ux_{k+3}
    &&\text{by \eqref{prelim:thm:eq2},}\\
  &= J M_k \ux_{k+1} (P+(-1)^kJ) \ux_{k+3}
    &&\text{by (iii),}\\
  &= J M_k \ux_{k+1}P \ux_{k+3}+ (-1)^k J M_k J M_{k+1} \ux_{k+2}
    &&\text{by (iii) again,}\\
  &= J M_k \ux_{k+1}P \ux_{k+3} - (-1)^k \ux_{k+2}.
 \end{aligned}
\]
\end{proof}

Comparing coefficients on both sides of the equality (i) of Lemma \ref{prelim:lemma:identities}, we obtain
\begin{equation}
\label{prelim:eqrecj}
x_{k+2,j}=3x_{k,0}x_{k+1,j}-x_{k-1,j} \quad (j=0,1,2).
\end{equation}
Doing the same with the three other equalities (ii) to (iv), we obtain ``commutation formulas'' which play an important role in the sequel.  Each matrix equality gives rise to four identities among which we retain only three.  Equality (ii) leads to
\begin{align}
\label{prelim:eq:1,0,1}
x_{k,0}x_{k+1,1}
  &= x_{k,1}x_{k+1,0} - (-1)^{k} x_{k-1,0},\\
\label{prelim:eq:1,0,2}
x_{k,0}x_{k+1,2}
  &= x_{k,1}x_{k+1,1} - (-1)^{k} x_{k-1,1},\\
\label{prelim:eq:1,1,2}
x_{k,1}x_{k+1,2}
  &= x_{k,2}x_{k+1,1} - 3 x_{k-1,1} - (-1)^{k} x_{k-1,2}.
\end{align}
Similarly (iii) provides
\begin{align}
\label{prelim:eq:2,0,1}
x_{k,0}x_{k+2,1}
  &= x_{k,1}x_{k+2,0} - (-1)^{k} x_{k+1,0},\\
\label{prelim:eq:2,0,2}
x_{k,0}x_{k+2,2}
  &= x_{k,1}x_{k+2,1} - (-1)^{k} x_{k+1,1},\\
\label{prelim:eq:2,1,2}
x_{k,1}x_{k+2,2}
  &= x_{k,2}x_{k+2,1} - 3 x_{k+1,1} - (-1)^{k} x_{k+1,2}.
\end{align}
Finally (iv) gives
\begin{align}
\label{prelim:eq:4,0,1}
x_{k,0}x_{k+4,1}
  &= x_{k,1}x_{k+4,0} - 3 (-1)^{k} x_{k+1,0} x_{k+3,0} - (-1)^k x_{k+2,0},\\
\label{prelim:eq:4,0,2}
x_{k,0}x_{k+4,2}
  &= x_{k,1}x_{k+4,1} - 3 (-1)^k x_{k+1,0} x_{k+3,1}- (-1)^k x_{k+2,1},\\
\label{prelim:eq:4,1,2}
x_{k,1}x_{k+4,2}
  &= x_{k,2}x_{k+4,1} - 3( 3x_{k+1,0} + (-1)^k x_{k+1,1})x_{k+3,1} -(-1)^k x_{k+2,2}.
\end{align}

We conclude this section with two series of estimates which derive from these formulas.

\begin{lemma}
\label{lemmaAF}
For each index $k\ge 2$, we have
\begin{align*}
\mathrm{(a)}\ \
x_{k,0}x_{k+2,2}\xi
   &= A_k + \cO(X_{k+1}^{-1})
   &\text{where }
   A_k &= x_{k,1}x_{k+2,2} - (-1)^{k} x_{k+1,2}\,,\\
\mathrm{(b)}\ \
x_{k,1}x_{k+2,2}\xi
   &= B_k - (-1)^{k} x_{k+1,2}\xi + \cO(X_{k+1}^{-1})
   &\text{where }
   B_k &= x_{k,2}x_{k+2,2} - 3 x_{k+1,2}\,,\\
\mathrm{(c)}\ \
x_{k,0}x_{k+1,2}\xi
   &= C_k + \cO(X_{k-1}^{-1})
   &\text{where }
   C_k &= x_{k,1}x_{k+1,2} - (-1)^{k} x_{k-1,2}\,,\\
\mathrm{(d)}\ \
x_{k,1}x_{k+1,2}\xi
   &= D_k - (-1)^{k} x_{k-1,2}\xi + \cO(X_{k-1}^{-1})
   &\text{where }
   D_k &= x_{k,2}x_{k+1,2} - 3 x_{k-1,2}\,,\\
\mathrm{(e)}\ \
x_{k,0}x_{k+4,2}\xi
   &= E_k + \cO(X_{k+2}^{-1})
   &\text{where }
   E_k&\in\bZ,\\
\mathrm{(f)}\ \
x_{k,1}x_{k+4,2}\xi
   &= F_k -(-1)^k x_{k+2,2}\xi + \cO(X_{k+2}^{-1})
   &\text{where }
   F_k&\in\bZ.
\end{align*}
\end{lemma}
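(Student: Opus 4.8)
The plan is to obtain each of the six estimates directly from one of the commutation formulas \eqref{prelim:eq:1,0,2}--\eqref{prelim:eq:4,1,2}, by feeding into it a single analytic ingredient, namely the approximation
\[
 x_{\ell,1}\,\xi = x_{\ell,2} + \cO(X_\ell^{-1}) \qquad (\ell\ge 1),
\]
which follows at once from \eqref{prelim:eq:basics}: the latter gives $x_{\ell,1}=x_{\ell,0}\xi+\cO(X_\ell^{-1})$ and $x_{\ell,2}=x_{\ell,0}\xi^2+\cO(X_\ell^{-1})$, so multiplying the first relation by $\xi$ yields the displayed one. Concretely, (a) will be read off from \eqref{prelim:eq:2,0,2}, (b) from \eqref{prelim:eq:2,1,2}, (c) from \eqref{prelim:eq:1,0,2}, (d) from \eqref{prelim:eq:1,1,2}, (e) from \eqref{prelim:eq:4,0,2}, and (f) from \eqref{prelim:eq:4,1,2}. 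Besides the displayed approximation, the only facts used are the monotonicity $X_k<X_{k+1}$ from \eqref{prelim:hyp} and the relation $X_{k+2}\asymp X_{k+1}X_k$ observed above (equivalently $X_{k+1}\asymp X_kX_{k-1}$ for $k\ge 2$).

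For each of (a)--(f) the computation is uniform. One starts from the corresponding commutation formula, whose left-hand side has the form $x_{k,i}\,x_{k+n,2}$ with $n\in\{1,2,4\}$, and multiplies both sides by $\xi$. On the right-hand side every summand is, up to a sign and a small positive integer factor, a product of one or two coordinates $x_{\ell,j}$; one attaches the extra factor $\xi$ to a coordinate of second index $1$ occurring in that summand (choosing the one of highest first index) and removes it through the displayed approximation, thereby turning $x_{\ell,1}\xi$ into $x_{\ell,2}$ at the cost of an error $\cO(X_\ell^{-1})$ times the size of the remaining (accompanying) factor, if any. In each of the six formulas there is at most one summand containing no coordinate of second index $1$; such a summand has the shape $-(-1)^k x_{\ell,2}$ with $\ell\in\{k-1,k+1,k+2\}$, and since it admits no reduction, multiplying by $\xi$ simply leaves it as $-(-1)^k x_{\ell,2}\,\xi$, which is precisely the term that reappears on the right-hand side of (b), (d) and (f). What survives the reductions is a $\bZ$-linear combination of products of coordinates, and a term-by-term comparison identifies it with the integer $A_k,\dots,F_k$ of the statement.

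It then only remains to check, case by case, that every error $\cO(X_\ell^{-1})$ times its accompanying factor is dominated by the error term announced on the corresponding line; this is the sole point requiring a little care, and it is heaviest for (e) and (f), where three reductions of quite different sizes occur at once. For (a) and (b) one has $n=2$; the reductions involve $\ell\in\{k+1,k+2\}$ with accompanying factor of size $\cO(X_k)$ or none, so the errors are $\cO(X_kX_{k+2}^{-1})$ and $\cO(X_{k+1}^{-1})$, both $\cO(X_{k+1}^{-1})$ since $X_{k+2}\asymp X_{k+1}X_k$. For (c) and (d) one has $n=1$; the reductions involve $\ell\in\{k-1,k+1\}$, so the errors are $\cO(X_kX_{k+1}^{-1})$ and $\cO(X_{k-1}^{-1})$, both $\cO(X_{k-1}^{-1})$ since $X_{k+1}\asymp X_kX_{k-1}$. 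For (e) and (f) one has $n=4$, and the errors produced are among $\cO(X_{k+2}^{-1})$, $\cO(X_{k+1}X_{k+3}^{-1})$ and $\cO(X_kX_{k+4}^{-1})$; the second equals $\cO(X_{k+2}^{-1})$ because $X_{k+3}\asymp X_{k+2}X_{k+1}$, and the third equals $\cO(X_{k+2}^{-1})$ because $X_{k+4}\asymp X_{k+3}X_{k+2}\gg X_kX_{k+2}$ by monotonicity. Since the commutation formulas \eqref{prelim:eq:1,0,2}--\eqref{prelim:eq:4,1,2} are available for $k\ge 2$, this yields (a)--(f) for every $k\ge 2$.
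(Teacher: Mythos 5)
Your proposal is correct and follows essentially the same route as the paper: you multiply exactly the same commutation formulas \eqref{prelim:eq:2,0,2}, \eqref{prelim:eq:2,1,2}, \eqref{prelim:eq:1,0,2}, \eqref{prelim:eq:1,1,2}, \eqref{prelim:eq:4,0,2}, \eqref{prelim:eq:4,1,2} by $\xi$ and use the basic estimate $x_{\ell,1}\xi=x_{\ell,2}+\cO(X_\ell^{-1})$ together with $X_{k+2}\asymp X_{k+1}X_k$, which is precisely the paper's argument (carried out explicitly only for (a) there). Your version just makes the error bookkeeping for (c)--(f) explicit, and it is accurate.
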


\begin{proof}
All these estimates are proved on the same pattern. To prove the first one, we simply multiply both sides of \eqref{prelim:eq:2,0,2} by $\xi$ and then use \eqref{prelim:eq:basics}.  This gives
\[
 \begin{aligned}
 x_{k,0}x_{k+2,2}\xi
  &= x_{k,1}x_{k+2,1}\xi - (-1)^{k} x_{k+1,1}\xi \\
  &= x_{k,1}(x_{k+2,2}+\cO(X_{k+2}^{-1})) - (-1)^{k} (x_{k+1,2}+\cO(X_{k+1}^{-1}))\\
  &= A_k +\cO(X_{k+1}^{-1}).
 \end{aligned}
\]
In the same way, the remaining five estimates follow respectively from \eqref{prelim:eq:2,1,2}, \eqref{prelim:eq:1,0,2}, \eqref{prelim:eq:1,1,2}, \eqref{prelim:eq:4,0,2} and \eqref{prelim:eq:4,1,2}.
\end{proof}

\begin{lemma}
\label{prelim:lemmaX}
For each index $k\ge 1$, we have
\begin{itemize}
\item[(i)] $x_{k,0}x_{k+2,0}\xi^4
  = B_k - 2 (-1)^k x_{k+1,2}\xi + \cO(X_{k+1}^{-1})$,
\smallskip
\item[(ii)] $x_{k,0}x_{k+1,0}x_{k+3,0}\xi^5
  \equiv - 6 x_{k+1,2}\xi + \cO(X_{k+1}^{-1}) \mod \bZ$,
\smallskip
\item[(iii)] $x_{k,0}x_{k+1,2}x_{k+3,2}\xi
  \equiv - 2 x_{k+1,2}\xi + \cO(X_{k+1}^{-1}) \mod \bZ$.
\end{itemize}
\end{lemma}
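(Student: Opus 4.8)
The plan is to derive the three estimates from the commutation formulas of the previous section together with the approximation $x_{k,j}=x_{k,0}\xi^j+\cO(X_k^{-1})$, following exactly the pattern established in Lemma \ref{lemmaAF}. For part (i), I would start from Lemma \ref{lemmaAF}(a), which reads $x_{k,0}x_{k+2,2}\xi = A_k + \cO(X_{k+1}^{-1})$ with $A_k = x_{k,1}x_{k+2,2}-(-1)^kx_{k+1,2}$. Multiplying by $\xi$ and replacing $x_{k,2}\xi$-type quantities using \eqref{prelim:eq:basics}, one first converts the left side into $x_{k,0}x_{k+2,0}\xi^4+\cO(\text{error})$; then $A_k\xi$ is handled by writing $x_{k,1}x_{k+2,2}\xi$ via Lemma \ref{lemmaAF}(b), namely $x_{k,1}x_{k+2,2}\xi = B_k-(-1)^kx_{k+1,2}\xi+\cO(X_{k+1}^{-1})$, while the term $(-1)^kx_{k+1,2}\xi$ contributes another $(-1)^kx_{k+1,2}\xi$. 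Adding these, the two $x_{k+1,2}\xi$ terms combine to give the coefficient $-2$, yielding $x_{k,0}x_{k+2,0}\xi^4 = B_k - 2(-1)^kx_{k+1,2}\xi + \cO(X_{k+1}^{-1})$. One must track the error terms carefully: multiplying an $\cO(X_{k+1}^{-1})$ term by $\xi$ is harmless, but replacing $x_{k,0}$ by $x_{k+2,0}/\xi^2$ introduces relative errors of size $X_{k+2}^{-1}\cdot\|\ux_{k+2}\|$-type quantities; since $X_{k+2}\asymp X_{k+1}X_k$ these stay within the claimed $\cO(X_{k+1}^{-1})$ once one uses $X_k\ge 1$.

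For part (ii), I would begin from Lemma \ref{lemmaAF}(e): $x_{k,0}x_{k+4,2}\xi = E_k + \cO(X_{k+2}^{-1})$ with $E_k\in\bZ$. The idea is to rewrite $x_{k+4,2}$ in terms of products of earlier coefficients using the recurrence and the commutation relations so as to extract a factor $x_{k+1,0}x_{k+3,0}$. Concretely, applying \eqref{prelim:eq:4,0,2} expresses $x_{k,0}x_{k+4,2}$ in terms of $x_{k,1}x_{k+4,1}$, $x_{k+1,0}x_{k+3,1}$ and $x_{k+2,1}$; multiplying by $\xi$ and substituting $x_{k+3,1}=x_{k+3,0}\xi+\cO(X_{k+3}^{-1})$ turns the middle term into $-3(-1)^kx_{k+1,0}x_{k+3,0}\xi+\cO(X_{k+1}^{-1})$, so that $x_{k,0}x_{k+4,0}\xi^3 \equiv -3(-1)^kx_{k+1,0}x_{k+3,0}\xi + \text{(integer-or-negligible)} \pmod{\text{small}}$. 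One then has to re-express $x_{k,0}x_{k+4,0}\xi^3$ against a multiple of $x_{k+1,0}x_{k+3,0}\xi$ (note $x_{k+4,0}\approx x_{k,0}x_{k+1,0}x_{k+3,0}\cdot(\text{bounded})$ by iterating Lemma \ref{prelim:lemma:identities}(i)), and combine with part (i) applied with appropriate indices; the coefficients $-2$ and $-3(-1)^k$ from the two sources multiply and add to produce the $-6$ in the statement. The bookkeeping of signs $(-1)^k$ and of which index the error $\cO(X_{k+1}^{-1})$ attaches to is the fiddly part here.

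For part (iii), I would start from Lemma \ref{lemmaAF}(c) or (d) with index $k+1$, namely $x_{k+1,0}x_{k+2,2}\xi = C_{k+1}+\cO(X_k^{-1})$, and use the recurrence \eqref{prelim:eqrecj} for $j=2$ to write $x_{k+3,2}=3x_{k+1,0}x_{k+2,2}-x_{k,2}$; multiplying $x_{k+1,2}x_{k+3,2}$-type expressions and substituting $x_{k+1,2}=x_{k+1,0}\xi^2+\cO(X_{k+1}^{-1})$ where needed reduces everything to combinations of $x_{k+1,2}\xi$ plus integers plus $\cO(X_{k+1}^{-1})$, and the net coefficient works out to $-2$. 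The overall structure is uniform: in each case one reduces a product of $\xi$ times several $x$-coefficients, modulo $\bZ$ and modulo $\cO(X_{k+1}^{-1})$, to an integer multiple of the single quantity $x_{k+1,2}\xi$. The main obstacle is purely organizational — choosing, for each part, the right sequence of substitutions among \eqref{prelim:eq:1,0,2}--\eqref{prelim:eq:4,1,2} and the recurrences so that the error terms never exceed $\cO(X_{k+1}^{-1})$ (this is why one uses $X_{k+2}\asymp X_{k+1}X_k$ and $X_k\ge 1$ repeatedly), and keeping the parity factors $(-1)^k$ consistent throughout; there is no conceptual difficulty beyond what is already displayed in Lemma \ref{lemmaAF}.
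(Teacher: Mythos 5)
Your part (i) is correct and is exactly the paper's argument (Lemma \ref{lemmaAF}(a) followed by (b), the two $x_{k+1,2}\xi$ terms giving the $-2$). The gaps are in (ii) and (iii). For (ii), the route you sketch does not work as stated. First, an approximate proportionality $x_{k+4,0}\approx x_{k,0}x_{k+1,0}x_{k+3,0}\cdot(\text{bounded})$ is useless modulo $\bZ$: only exact integer identities can be fed into a congruence, and the exact identity obtained by iterating Lemma \ref{prelim:lemma:identities}(i), namely $x_{k+4,0}=9x_{k,0}x_{k+1,0}x_{k+3,0}-3x_{k-1,0}x_{k+3,0}-x_{k+1,0}$, forces a division by $9$ to isolate the triple product, which destroys congruences mod $\bZ$. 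Second, your intermediate congruence $x_{k,0}x_{k+4,0}\xi^3\equiv-3(-1)^kx_{k+1,0}x_{k+3,0}\xi$ (up to small error, mod $\bZ$) carries no information: by Lemma \ref{lemmaAF}(e) the left-hand side is within $\cO(X_{k+2}^{-1})$ of an integer, and so is $x_{k+1,0}x_{k+3,0}\xi$ since $x_{k+3,0}\xi=x_{k+3,1}+\cO(X_{k+3}^{-1})$. Third, "the coefficients $-2$ and $-3(-1)^k$ multiply and add to produce the $-6$" is not a derivation (and $(-2)(-3)=+6$). The actual mechanism is different: take (i) at index $k+1$, multiply by $x_{k,0}$, convert $x_{k,0}x_{k+2,2}\xi$ into $A_k$ by (a), multiply by $\xi$ and use $A_k\xi=B_k-2(-1)^kx_{k+1,2}\xi+\cO(X_{k+1}^{-1})$, and finally write $x_{k,0}B_{k+1}\xi=x_{k,0}x_{k+1,2}x_{k+3,2}\xi-3A_k+\cO(X_{k+1}^{-1})$; thus (ii) is \emph{reduced to (iii)}, the $-6$ arising as $-4-2$. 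Your plan treats (ii) and (iii) as independent and misses this reduction.

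For (iii), the specific substitutions you name blow the error budget. Expanding $x_{k+3,2}=3x_{k+1,0}x_{k+2,2}-x_{k,2}$ creates the four-fold product $3x_{k,0}x_{k+1,0}x_{k+1,2}x_{k+2,2}\xi$; replacing $x_{k+1,2}$ there by $x_{k+1,0}\xi^2+\cO(X_{k+1}^{-1})$ multiplies the error by a cofactor of size $\asymp X_kX_{k+2}$, and applying Lemma \ref{lemmaAF}(c) at index $k+1$ against the cofactor $3x_{k,0}x_{k+1,2}$ produces an error $\cO(X_{k+1})$ --- both are hopelessly larger than the required $\cO(X_{k+1}^{-1})$. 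The difficulty is therefore not "purely organizational": one must stay with \emph{exact} identities until the last step, and the right move is to expand the other factor, $x_{k,0}x_{k+1,2}=x_{k,1}x_{k+1,1}-(-1)^kx_{k-1,1}$, by the commutation formula \eqref{prelim:eq:1,0,2}, then apply Lemma \ref{lemmaAF}(b) at index $k+1$ and (f) at index $k-1$ to $x_{k+1,1}x_{k+3,2}\xi$ and $x_{k-1,1}x_{k+3,2}\xi$; the only error then multiplied by a large factor is $\cO(X_{k+2}^{-1})$ times $x_{k,1}\asymp X_k$, which stays within $\cO(X_{k+1}^{-1})$, and one further application of (b) to $x_{k,1}x_{k+2,2}\xi$ yields the coefficient $-2$, which you asserted but did not derive.
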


\begin{proof}  Using Lemma \ref{lemmaAF} (a), we find
\[
x_{k,0}x_{k+2,0}\xi^4
  = x_{k,0}x_{k+2,2}\xi^2 + \cO(X_{k+1}^{-1})
  = A_k\xi + \cO(X_{k+1}^{-1}).
\]
Similarly, using Lemma \ref{lemmaAF} (b), we get
\begin{equation}
\label{eqAkxi}
A_k\xi = B_k - 2(-1)^k x_{k+1,2}\xi + \cO(X_{k+1}^{-1}).
\end{equation}
Putting these two results together, we obtain
the equality (i).  Replacing $k$ by $k+1$ in this equality
and multiplying both sides of the resulting equation by $x_{k,0}$,
we find
\begin{align*}
x_{k,0}x_{k+1,0}x_{k+3,0}\xi^4
  &= x_{k,0}B_{k+1} + 2(-1)^k  x_{k,0}x_{k+2,2}\xi + \cO(X_{k+1}^{-1})\\
  &= x_{k,0}B_{k+1} + 2(-1)^k  A_k + \cO(X_{k+1}^{-1}),
\end{align*}
where the last step uses Lemma \ref{lemmaAF} (a).  Multiplying by $\xi$
and using \eqref{eqAkxi}, this gives
\[
x_{k,0}x_{k+1,0}x_{k+3,0}\xi^5
  \equiv x_{k,0}B_{k+1}\xi - 4 x_{k+1,2}\xi  + \cO(X_{k+1}^{-1})
  \mod \bZ.
\]
Moreover, using Lemma \ref{lemmaAF} (a), we find
\[
x_{k,0}B_{k+1}\xi
 = x_{k,0}x_{k+1,2}x_{k+3,2}\xi - 3 A_k + \cO(X_{k+1}^{-1}).
\]
Thus, in order to prove (ii), it remains simply to verify (iii).
By \eqref{prelim:eq:1,0,2}, we have
\[
x_{k,0}x_{k+1,2}x_{k+3,2}\xi
 = ( x_{k,1}x_{k+1,1} - (-1)^{k} x_{k-1,1} ) x_{k+3,2}\xi
\]
By Lemma \ref{lemmaAF} (b) and (f), we also have
\begin{align*}
 x_{k+1,1}x_{k+3,2}\xi
   &= B_{k+1} + (-1)^k x_{k+2,2}\xi + \cO(X_{k+2}^{-1}),\\
 x_{k-1,1}x_{k+3,2}\xi
   &= F_{k-1} + (-1)^k x_{k+1,2}\xi + \cO(X_{k+1}^{-1}).
\end{align*}
Substituting these equalities into the previous one, we obtain
\[
x_{k,0}x_{k+1,2}x_{k+3,2}\xi
 \equiv (-1)^k x_{k,1}x_{k+2,2}\xi - x_{k+1,2}\xi + \cO(X_{k+1}^{-1})
 \mod \bZ.
\]
The relation (iii) follows from this together with Lemma \ref{lemmaAF} (b).
\end{proof}

%
%

\section{Accumulation points}
\label{sec:accpts}

Recall that, for any real number $\eta$, we denote by $\{\eta\}$ its distance to a closest integer.  This function has the property that $|\{\eta\}-\{\eta'\}| \le \{\eta-\eta'\}$ for any $\eta,\eta'\in\bR$.  For an extremal number $\xi$ as we fixed in Section \ref{sec:prelim}, it is shown in \cite[\S4]{RcubicII} that the sequence $\big( \{x_{k,0}\xi^3\} \big)_{k\ge 1}$ has at most three accumulation points.  In this section, we extend this result by showing that, for any polynomial $R\in\bZ[T]$ of degree at most $5$, the sequence $\big( \{x_{k,0}R(\xi)\} \big)_{k\ge 1}$ has at most six accumulation points.

\begin{lemma}
\label{lemmaP}
We have
\begin{itemize}
\item[(i)] $\{(x_{k+3,0}+x_{k,0})\xi^j\} \ll X_k^{-1}$ \quad for $j=0,1,2,3$,
\smallskip
\item[(ii)] $\{(x_{k+6,0}-x_{k,0})\xi^j\} \ll X_k^{-1}$ \quad for $j=0,1,2,\dots,5$.
\end{itemize}
\end{lemma}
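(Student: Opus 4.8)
The plan is to derive both statements from the recurrence relations for the coordinates $x_{k,j}$ together with the basic estimates \eqref{prelim:eq:basics}. The key observation is that $\{x_{k,0}\xi^j\} = \{\cO(X_k^{-1})\} \ll X_k^{-1}$ follows directly from the second part of \eqref{prelim:eq:basics} for $j=0,1,2$; the whole difficulty is in handling the higher powers $\xi^3,\xi^4,\xi^5$, for which no such individual estimate is available. The strategy for part (i) is therefore to express $(x_{k+3,0}+x_{k,0})\xi^j$ in terms of quantities that are integers up to $\cO(X_k^{-1})$. For $j=0,1,2$ this is immediate from \eqref{prelim:eq:basics} once we know that $x_{k+3,0}+x_{k,0}$ is itself small, but in fact it is not small; rather, $(x_{k+3,0}+x_{k,0})$ times any of $1,\xi,\xi^2$ differs from an integer combination by $\cO(X_k^{-1})$, which we should extract from the commutation formulas. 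The crucial case $j=3$ is where we use the material of Lemma \ref{prelim:lemmaX}: combining the identity expressing $x_{k,0}x_{k+2,0}\xi^4$ modulo $\bZ$ (or the analogous relations coming from \eqref{prelim:eq:basics} and Lemma \ref{lemmaAF}) with the three-term recurrence \eqref{prelim:eqrecj}, one should be able to show that $x_{k+3,0}\xi^3 + x_{k,0}\xi^3$ is congruent modulo $\bZ$ to something of size $\cO(X_k^{-1})$.

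More concretely, for part (i) I would first record that \eqref{prelim:eqrecj} with $j$ replaced appropriately, applied twice, gives an expression for $x_{k+3,0}$ in terms of $x_{k+2,0}, x_{k+1,0}, x_{k,0}$ and products of the $x_{\cdot,0}$'s; then multiplying by $\xi^j$ and repeatedly using the substitution $x_{k,j} = x_{k,0}\xi^j + \cO(X_k^{-1})$ converts powers of $\xi$ attached to $x_{k,0}$ into genuine coordinates $x_{k,j}$, which are integers. The point is that after all substitutions the fractional part collapses. For $j=3$ specifically, the identity $x_{k,0}x_{k+2,0}\xi^4 = B_k - 2(-1)^k x_{k+1,2}\xi + \cO(X_{k+1}^{-1})$ from Lemma \ref{prelim:lemmaX}(i), together with the relation $x_{k,0}x_{k+2,0}\xi^2 = x_{k,0}x_{k+2,2} + \cO(X_k^{-1})$ and Lemma \ref{lemmaAF}(a), pins down $x_{k,0}x_{k+2,0}\xi^2$ modulo $\bZ$; dividing out (heuristically) by the integer $x_{k,0}x_{k+2,0}$ is not legitimate, so instead I would work directly with the combination $x_{k+3,0}+x_{k,0}$ and use the recurrence \eqref{prelim:eqrecj} at $j=3$, namely $x_{k+2,3}'' = 3x_{k,0}x_{k+1,3}'' - x_{k-1,3}''$ where the double prime denotes the relevant reduction, to telescope. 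The cancellation $x_{k+3,0} + x_{k,0}$ is exactly the combination that makes a period-$3$ pattern in the sign $(-1)^k$ disappear, which is why this particular sum appears.

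For part (ii), the natural approach is to bootstrap from part (i): since $(x_{k+3,0}+x_{k,0})\xi^j$ is $\cO(X_k^{-1})$ modulo $\bZ$ for $j \le 3$, applying (i) again with $k$ replaced by $k+3$ and adding gives $(x_{k+6,0}-x_{k,0})\xi^j \ll X_k^{-1}$ for $j\le 3$ by the subadditivity $\{\eta-\eta'\}\le\{\eta\}+\{\eta'\}$ together with the monotonicity $X_{k+3}>X_k$. This handles half the range. For $j=4,5$ one needs the further identities of Lemma \ref{prelim:lemmaX}(ii)--(iii) and Lemma \ref{lemmaAF}(e)--(f): the quantities $x_{k,0}x_{k+1,0}x_{k+3,0}\xi^5$ and $x_{k,0}x_{k+2,0}\xi^4$ are controlled modulo $\bZ$ by a single term proportional to $x_{k+1,2}\xi$, and comparing the expression at index $k$ with the one at index $k+3$ one finds that the troublesome terms match up once we pass to the difference $x_{k+6,0}-x_{k,0}$ (rather than the sum), because going six steps restores the sign $(-1)^k$ and doubles the period-$3$ structure. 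I expect the main obstacle to be the bookkeeping: tracking which error terms are $\cO(X_k^{-1})$ versus $\cO(X_{k+1}^{-1})$ or $\cO(X_{k+2}^{-1})$ through the chain of substitutions, and making sure that after dividing out auxiliary integer factors (which must be done indirectly, via the recurrences, never literally) the final error is genuinely $\cO(X_k^{-1})$ and not something larger like $\cO(X_k^{-1/\gamma})$. Getting the powers of $X$ to come out sharp in $j=4,5$ is the delicate point, and it is presumably why the lemma is stated only up to $j=5$: the analogous statement for $j=6$ would require an identity controlling $x_{k,0}\cdots\xi^6$ modulo $\bZ$ that simply is not available with the same quality, which is the algebraic shadow of the degree-$6$ phenomenon highlighted in Theorem \ref{intro:thmdeg6}.
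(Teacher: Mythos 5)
Your outline points at the right toolbox (the recurrence \eqref{prelim:eqrecj}, Lemma \ref{lemmaAF}, Lemma \ref{prelim:lemmaX}, and the reduction of (ii) for $j\le 3$ to (i) via $(x_{k+6,0}-x_{k,0})\xi^j=(x_{k+6,0}+x_{k+3,0})\xi^j-(x_{k+3,0}+x_{k,0})\xi^j$, which is exactly what the paper does), but the steps that constitute the actual content of the lemma are never carried out, and the specific routes you indicate would not work as described. For (i) with $j=3$ you invoke Lemma \ref{prelim:lemmaX}(i) (an identity for $\xi^4$, not $\xi^3$) and a ``recurrence at $j=3$'' written as $x_{k+2,3}''=3x_{k,0}x_{k+1,3}''-x_{k-1,3}''$; there is no third coordinate, so this object does not exist. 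The mechanism that actually closes this case is short but must be exhibited: $(x_{k+3,0}+x_{k,0})\xi^3=(x_{k+3,2}+x_{k,2})\xi+\cO(X_k^{-1})$ by \eqref{prelim:eq:basics}, then $x_{k+3,2}+x_{k,2}=3x_{k+1,0}x_{k+2,2}$ by \eqref{prelim:eqrecj}, and finally $x_{k+1,0}x_{k+2,2}\xi=C_{k+1}+\cO(X_k^{-1})$ by Lemma \ref{lemmaAF}(c), with $C_{k+1}\in\bZ$. Nothing in your text produces this (or any complete) chain; ``one should be able to show'' is the statement of the goal, not a proof.

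The same gap is more serious for (ii) with $j=4,5$. Lemma \ref{prelim:lemmaX} controls the products $x_{k,0}x_{k+2,0}\xi^4$ and $x_{k,0}x_{k+1,0}x_{k+3,0}\xi^5$ modulo $\bZ$, whereas the lemma requires control of a \emph{single} factor $(x_{k+6,0}-x_{k,0})$ times $\xi^4$ or $\xi^5$; you yourself note that one cannot divide out the auxiliary integer factors, but you never supply the substitute mechanism, so ``comparing the expression at index $k$ with the one at index $k+3$'' has no concrete content. The working argument is a bootstrap from the $j=3$ identity: multiply $(x_{k+3,0}+x_{k,0})\xi^3=3C_{k+1}+\cO(X_k^{-1})$ by $\xi$ and use Lemma \ref{lemmaAF}(d) to get $(x_{k+3,0}+x_{k,0})\xi^4=3\bigl(D_{k+1}+2(-1)^kx_{k,2}\xi\bigr)+\cO(X_k^{-1})$; in the difference over six steps the sign flip $(-1)^{k+3}=-(-1)^k$ turns the leftover terms into $(x_{k+3,2}+x_{k,2})\xi$, which is handled by the $j=3$ case again. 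For $j=5$ one must in addition expand $D_{k+4}-D_{k+1}$ via \eqref{prelim:eqrecj} and use Lemma \ref{prelim:lemmaX}(iii) to show $(D_{k+4}-D_{k+1})\xi\equiv -6x_{k+3,2}\xi+6x_{k,2}\xi+\cO(X_k^{-1})\bmod\bZ$ before the $j=3$ identity can be applied a last time. None of these computations, nor any equivalent of them, appears in your proposal, so as it stands it is a plan rather than a proof. (A minor further slip: for $j=0,1,2$ no commutation formulas are needed, since $x_{k,0}\xi^j$ and $x_{k+3,0}\xi^j$ are each individually within $\cO(X_k^{-1})$ of the integers $x_{k,j}$ and $x_{k+3,j}$.)
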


\begin{proof}
For $j=0,1,2$, the formulas (i) and (ii) are clear because the second estimate in \eqref{prelim:eq:basics} shows that $\{x_{k,0}\xi^j\}\ll X_k^{-1}$.  We find
\begin{equation}
\label{LemmaP:eq:xi^3}
\begin{aligned}
(x_{k+3,0}+x_{k,0})\xi^3
 &= (x_{k+3,2}+x_{k,2})\xi + \cO(X_k^{-1})
  &&\text{by \eqref{prelim:eq:basics},}\\
 &= 3 x_{k+1,0}x_{k+2,2}\xi + \cO(X_k^{-1})
  &&\text{by \eqref{prelim:eqrecj},}\\
 &= 3 C_{k+1} + \cO(X_k^{-1})
  &&\text{by Lemma \ref{lemmaAF} (c),}
\end{aligned}
\end{equation}
which proves (i) for $j=3$.  Since
\begin{equation}
\label{LemmaP:eq:xi^j}
(x_{k+6,0}-x_{k,0})\xi^j
 = (x_{k+6,0}+x_{k+3,0})\xi^j -(x_{k+3,0}+x_{k,0})\xi^j
\end{equation}
for any $j\ge 0$, it also proves (ii) for $j=3$.  By Lemma \ref{lemmaAF} (d),
we  have
\begin{equation}
\label{LemmaP:eqCk}
C_{k+1}\xi = D_{k+1} + 2(-1)^k x_{k,2}\xi + \cO(X_k^{-1})
\end{equation}
Combining this with \eqref{LemmaP:eq:xi^3} gives
\begin{equation}
\label{LemmaP:eq4}
(x_{k+3,0}+x_{k,0})\xi^4
  = 3( D_{k+1} + 2(-1)^k x_{k,2}\xi ) + \cO(X_k^{-1}).
\end{equation}
Applying \eqref{LemmaP:eq:xi^j} with $j=4$, this in turn leads to
\begin{equation}
\label{LemmaP:eq:xi^4}
\begin{aligned}
(x_{k+6,0}-x_{k,0})\xi^4
 &= 3(D_{k+4}-D_{k+1} - 2 (-1)^k (x_{k+3,2}+x_{k,2})\xi)
    + \cO(X_k^{-1}) \\
 &= 3(D_{k+4}-D_{k+1} - 6 (-1)^k C_{k+1}) + \cO(X_k^{-1}),
\end{aligned}
\end{equation}
where the second equality uses \eqref{LemmaP:eq:xi^3}. This proves (ii) for $j=4$.
Using \eqref{prelim:eqrecj} to expand $x_{k+4,2}$, we find
\begin{align*}
D_{k+4}-D_{k+1}
 &= x_{k+4,2}x_{k+5,2} - x_{k+1,2}x_{k+2,2} - 3 x_{k+3,2} + 3 x_{k,2} \\
 &= (3x_{k+2,0}x_{k+3,2}-x_{k+1,2})x_{k+5,2} - x_{k+1,2}x_{k+2,2}
    - 3(x_{k+3,2}+x_{k,2}) + 6 x_{k,2} \\
 &= 3x_{k+2,0}x_{k+3,2}x_{k+5,2} - x_{k+1,2}(x_{k+5,2}+x_{k+2,2})
    - 3(x_{k+3,2}+x_{k,2}) + 6 x_{k,2}.
\end{align*}
Thanks to Lemma \ref{prelim:lemmaX} (iii) and the formulas \eqref{LemmaP:eq:xi^3},
this gives
\begin{equation}
\label{LemmaP:eq:Dxi}
(D_{k+4}-D_{k+1})\xi
 \equiv -6x_{k+3,2}\xi  + 6 x_{k,2}\xi + \cO(X_k^{-1})
 \mod \bZ
\end{equation}
Combining this with \eqref{LemmaP:eq:xi^4} and \eqref{LemmaP:eqCk} and then using
once again the formulas \eqref{LemmaP:eq:xi^3}, we obtain finally
\[
(x_{k+6,0}-x_{k,0})\xi^5
 \equiv 3(-6x_{k+3,2}\xi  - 6 x_{k,2})\xi + \cO(X_k^{-1})
 \equiv \cO(X_k^{-1}) \mod \bZ,
\]
which proves (ii) for $j=5$.
\end{proof}

The above lemma admits the following immediate consequence.

\begin{proposition}
\label{propdelta}
Let $R$ be a polynomial of $\bZ[T]$ of degree at most $5$.  For any positive integer $k$, the limit
\[
 \delta_k(R(\xi)) := \lim_{i\to \infty} \{ x_{k+6i,0} R(\xi) \}
\]
exists and satisfies
\[
 \big| \delta_k(R(\xi)) - \{ x_{k,0} R(\xi) \} \big| \ll X_k^{-1} \|R\|.
\]
As a function of $k$, the quantity $\delta_k(R(\xi))$ is periodic with period $6$.  When $\deg(R)\le 3$, it admits also the period $3$.
\end{proposition}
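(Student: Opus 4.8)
The plan is to deduce everything from Lemma~\ref{lemmaP} by elementary manipulations with the function $\{\cdot\}$, together with the super-geometric growth of $(X_k)$. Write $R(T)=\sum_{j=0}^{5}r_jT^j$ with $r_j\in\bZ$ and $|r_j|\le\|R\|$. Since $x_{k+6,0}-x_{k,0}\in\bZ$, we have $(x_{k+6,0}-x_{k,0})R(\xi)=\sum_{j=0}^{5}r_j(x_{k+6,0}-x_{k,0})\xi^j$, so repeated use of the elementary inequalities $\{\eta+\eta'\}\le\{\eta\}+\{\eta'\}$ and $\{n\eta\}\le|n|\{\eta\}$ ($n\in\bZ$), combined with Lemma~\ref{lemmaP}(ii), yields $\{(x_{k+6,0}-x_{k,0})R(\xi)\}\ll\|R\|X_k^{-1}$, the implied constant being independent of $k$ because only the six exponents $j=0,\dots,5$ occur. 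Applying the property $|\{\eta\}-\{\eta'\}|\le\{\eta-\eta'\}$ recorded before the statement, this gives
\[
 \big|\{x_{k+6,0}R(\xi)\}-\{x_{k,0}R(\xi)\}\big|\ll\|R\|X_k^{-1}
\]
for every $k\ge1$.

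Next I would run this estimate along the progression $k,k+6,k+12,\dots$. Replacing $k$ by $k+6i$ gives $|\{x_{k+6(i+1),0}R(\xi)\}-\{x_{k+6i,0}R(\xi)\}|\ll\|R\|X_{k+6i}^{-1}$. From $X_{k+1}\asymp X_k^{\gamma}$ in \eqref{prelim:eq:basics} one has $X_{k+6}\asymp X_k^{\gamma^6}$ with $\gamma^6>2$, so $X_{k+6}\ge 2X_k$ for all sufficiently large $k$; iterating and absorbing the finitely many small values of $k$ into the constant, one gets $\sum_{i\ge0}X_{k+6i}^{-1}\ll X_k^{-1}$ uniformly in $k$. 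Hence $(\{x_{k+6i,0}R(\xi)\})_{i\ge0}$ is Cauchy, its limit is $\delta_k(R(\xi))$ by definition, and summing the telescoping bound gives $|\delta_k(R(\xi))-\{x_{k,0}R(\xi)\}|\ll\|R\|X_k^{-1}$. Periodicity with period $6$ follows by reindexing $i\mapsto i+1$, which shows $\delta_{k+6}(R(\xi))=\delta_k(R(\xi))$.

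For $\deg(R)\le3$ I would argue identically along the progression of common difference $3$, using Lemma~\ref{lemmaP}(i) in place of (ii) and the identity $\{-\eta\}=\{\eta\}$: since $x_{k+3,0}+x_{k,0}\in\bZ$, the same manipulations give $|\{x_{k+3,0}R(\xi)\}-\{x_{k,0}R(\xi)\}|=|\{x_{k+3,0}R(\xi)\}-\{-x_{k,0}R(\xi)\}|\le\{(x_{k+3,0}+x_{k,0})R(\xi)\}\ll\|R\|X_k^{-1}$, and $\gamma^3>2$ makes $\sum_{i\ge0}X_{k+3i}^{-1}\ll X_k^{-1}$. Thus $(\{x_{k+3i,0}R(\xi)\})_{i\ge0}$ converges, and as $(\{x_{k+6i,0}R(\xi)\})_{i\ge0}$ is a subsequence of it, its limit is again $\delta_k(R(\xi))$; reindexing now shows $\delta_{k+3}(R(\xi))=\delta_k(R(\xi))$.

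Since Lemma~\ref{lemmaP} already carries all of the arithmetic content, there is no genuine obstacle in this proof; the only point requiring a little care is establishing the bound $\sum_{i\ge0}X_{k+6i}^{-1}\ll X_k^{-1}$ uniformly in $k$, which is where one uses that $(X_k)$ grows faster than any geometric progression.
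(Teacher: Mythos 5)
Your proposal is correct and follows essentially the same route as the paper: the key estimate $\{(x_{k+6,0}-x_{k,0})R(\xi)\}\ll\|R\|X_k^{-1}$ from Lemma \ref{lemmaP}(ii), the telescoping sum $\sum_i X_{k+6i}^{-1}\ll X_k^{-1}$ exploiting the growth $X_{k+1}\asymp X_k^{\gamma}$, and Lemma \ref{lemmaP}(i) for degree at most $3$. The only (harmless) variation is at the last step: you obtain the period $3$ by showing the full difference-$3$ sequence converges and identifying $\delta_k(R(\xi))$ and $\delta_{k+3}(R(\xi))$ as limits of two of its subsequences, whereas the paper deduces it from $\delta_{k+3}(R(\xi))-\delta_k(R(\xi))\to 0$ combined with $6$-periodicity; both arguments are valid.
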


\begin{proof}
Write $R=\sum_{j=0}^5 r_j T^j$.  Using Lemma \ref{lemmaP} (ii), we find, for each $k\ge 1$,
\begin{align*}
 \big| \{ x_{k+6,0} R(\xi) \} - \{ x_{k,0} R(\xi) \} \big|
  &\le \{ (x_{k+6,0}-x_{k,0}) R(\xi) \} \\
  &\le \sum_{j=0}^5 \{ (x_{k+6,0}-x_{k,0}) \xi^j \}\, |r_j| \\
  &\le c X_k^{-1}\|R\|
\end{align*}
with a constant $c>0$ depending only on $\xi$.  Therefore, for any pair of positive integers $k$ and $i$, we have
\[
 \big| \{ x_{k+6i,0} R(\xi) \} - \{ x_{k,0} R(\xi) \} \big|
  \le c \sum_{j=0}^{i-1} X_{k+6j}^{-1}\|R\|
  \ll X_k^{-1} \|R\|.
\]
This proves the first assertion of the proposition.  The fact that $\delta_k(R(\xi))$ is $6$-periodic follows from the definition.  However, when $\deg(R)\le 3$, Lemma \ref{lemmaP} (i) provides
\[
  \big| \{ x_{k+3,0} R(\xi) \} - \{ x_{k,0} R(\xi) \} \big|
   \le \{ (x_{k+3,0}+x_{k,0}) R(\xi) \}
   \ll X_k^{-1} \|R\|
\]
showing that $\delta_{k+3}(R(\xi))-\delta_k(R(\xi))$ tends to $0$ as $k\to \infty$.  Thus, in that case, $\delta_k(R(\xi))$ is $3$-periodic.
\end{proof}

%
%

\section{Arithmetic estimates}
\label{sec:arith}

For the applications that we have in view in the next sections, we need to estimate the greatest common divisors of $x_{k,0}$ with each of the integers $A_k$ and $E_k$ defined in Lemma \ref{lemmaAF}.  Here, we show that both of them are equal to $1$ or $2$.  The proof relies on the properties of a sequence of polynomials which we first define below following \cite[\S 8]{RcubicI}.  We end the section by showing that, for any degree three polynomial $R\in\bZ[T]$, the accumulation points of the sequence $(\{x_{k,0}R(\xi)\})_{k\ge 1}$ are transcendental over $\bQ$.  We deduce from this a proof of a qualitative version of Theorem \ref{intro:thmPR} in degree $d=3$ which illustrates our general approach to the questions dealt with in this paper.

Put $\uT=(1,T,T^2)$ where $T$ denotes an indeterminate.  Following \cite[\S 8]{RcubicI}, we define a quadratic polynomial
\[
 Q_k(T) := \det(\uT,\ux_k,\ux_{k+1})
   = \left|\begin{matrix}
      1 &T &T^2\\ x_{k,0} &x_{k,1} &x_{k,2}\\ x_{k+1,0} &x_{k+1,1} &x_{k+1,2}
     \end{matrix}\right|
   \in \bZ[T],
\]
for each integer $k\ge 1$.  The next lemma collects most of the results that we need about these polynomials.

\begin{lemma}
\label{lemmaQ}
For each index $k\ge 2$, we have
\begin{equation}
\label{lemmaQ:eq1}
 \|Q_k\| \asymp |Q_k'(\xi)| \asymp X_{k-1}, \quad
 |Q_k(\xi)| \ll X_{k+2}^{-1} \asymp \|Q_k\|^{-\gamma^3}
\end{equation}
and the leading coefficient of $Q_k$ is $(-1)^{k-1}x_{k-1,0}$.  Moreover, the polynomials $Q_{k-1}$, $Q_k$ and $Q_{k+1}$ are linearly independent and satisfy
\begin{equation}
\label{lemmaQ:eq2}
 x_{k-1,0}Q_k(T) - x_{k,0}Q_{k+1}(T) + x_{k+1,0}Q_{k-1}(T)
 = \det(\ux_{k-1},\ux_k,\ux_{k+1})
 = -2(-1)^k.
\end{equation}
\end{lemma}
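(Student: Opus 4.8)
The plan is to reduce the whole lemma to the explicit minor expansion of $Q_k$ together with the commutation formulas of Section~\ref{sec:prelim}. Expanding the defining determinant along its first row gives $Q_k(T)=m_{12}-m_{02}T+m_{01}T^2$, where $m_{ij}=x_{k,i}x_{k+1,j}-x_{k,j}x_{k+1,i}$ are the three $2\times2$ minors of the matrix with rows $\ux_k$ and $\ux_{k+1}$. Into these I would feed the identities coming from Lemma~\ref{prelim:lemma:identities}(ii) — namely \eqref{prelim:eq:1,0,1}, \eqref{prelim:eq:1,1,2}, and the fourth scalar identity $x_{k,2}x_{k+1,0}=x_{k,1}x_{k+1,1}+3x_{k-1,0}+(-1)^kx_{k-1,1}$ obtained from that same matrix relation in the same way (it is the entry not retained among \eqref{prelim:eq:1,0,1}--\eqref{prelim:eq:1,1,2}). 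This rewrites the three minors in terms of the entries of $\ux_{k-1}$ alone:
\[
 m_{01}=(-1)^{k-1}x_{k-1,0},\qquad
 m_{02}=-3x_{k-1,0}-2(-1)^kx_{k-1,1},\qquad
 m_{12}=-3x_{k-1,1}-(-1)^kx_{k-1,2}.
\]
In particular the leading coefficient of $Q_k$ is $m_{01}=(-1)^{k-1}x_{k-1,0}$; the other two coefficients are visibly $\ll X_{k-1}$, while $|x_{k-1,0}|\asymp X_{k-1}$ by \eqref{prelim:hyp}, so $\|Q_k\|\asymp X_{k-1}$.

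Next, $Q_k'(\xi)=2m_{01}\xi-m_{02}=2(-1)^{k-1}x_{k-1,0}\xi+3x_{k-1,0}+2(-1)^kx_{k-1,1}$, and since $x_{k-1,1}=x_{k-1,0}\xi+\cO(X_{k-1}^{-1})$ by \eqref{prelim:eq:basics} the two $\xi$-terms cancel up to $\cO(X_{k-1}^{-1})$, leaving $Q_k'(\xi)=3x_{k-1,0}+\cO(X_{k-1}^{-1})$, hence $|Q_k'(\xi)|\asymp X_{k-1}\asymp\|Q_k\|$. For $Q_k(\xi)$ the explicit coefficients are too lossy, so instead I would subtract $x_{k,0}$ times the first row of $\det((1,\xi,\xi^2),\ux_k,\ux_{k+1})$ from its second row and $x_{k+1,0}$ times the first row from its third; by \eqref{prelim:eq:basics} the last two rows then have all entries $\cO(X_k^{-1})$ and $\cO(X_{k+1}^{-1})$ respectively, and expanding along the first column gives $|Q_k(\xi)|\ll X_k^{-1}X_{k+1}^{-1}\asymp X_{k+2}^{-1}$. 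Since $X_{j+1}\asymp X_j^\gamma$, iterating gives $X_{k+2}\asymp X_{k-1}^{\gamma^3}$, so $X_{k+2}^{-1}\asymp\|Q_k\|^{-\gamma^3}$.

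For the identity \eqref{lemmaQ:eq2} I would use the standard linear dependence among the four vectors $\uT,\ux_{k-1},\ux_k,\ux_{k+1}$ of $\bR^3$:
\[
 \det(\ux_{k-1},\ux_k,\ux_{k+1})\,\uT
 = \det(\uT,\ux_k,\ux_{k+1})\,\ux_{k-1}
 -\det(\uT,\ux_{k-1},\ux_{k+1})\,\ux_k
 +\det(\uT,\ux_{k-1},\ux_k)\,\ux_{k+1}.
\]
By Lemma~\ref{prelim:lemma:identities}(i), $\ux_{k+2}=3x_{k,0}\ux_{k+1}-\ux_{k-1}$, so $\det(\uT,\ux_{k-1},\ux_{k+1})=\det(\uT,\ux_{k+1},\ux_{k+2})=Q_{k+1}(T)$; comparing first coordinates of the displayed identity yields the polynomial equality in \eqref{lemmaQ:eq2}. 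To evaluate the constant, expand $\det(\ux_{k-1},\ux_k,\ux_{k+1})$ along its first row as $x_{k-1,0}m_{12}-x_{k-1,1}m_{02}+x_{k-1,2}m_{01}$ and substitute the three minor formulas: all cross terms cancel, leaving $-2(-1)^k(x_{k-1,0}x_{k-1,2}-x_{k-1,1}^2)=-2(-1)^k\det(\ux_{k-1})=-2(-1)^k$ since $\ux_{k-1}\in\SL_2(\bZ)$. Linear independence of $Q_{k-1},Q_k,Q_{k+1}$ then follows: a relation $aQ_{k-1}+bQ_k+cQ_{k+1}=0$ makes the linear form $\uv\mapsto a\det(\uv,\ux_{k-1},\ux_k)+b\det(\uv,\ux_k,\ux_{k+1})+c\det(\uv,\ux_{k+1},\ux_{k+2})$ vanish on the spanning set $\{(1,T,T^2)\}$ of $\bR^3$, hence identically; evaluating it at $\uv=\ux_{k+1}$, $\ux_k$ and $\ux_{k-1}$ and using $\det(\ux_{j-1},\ux_j,\ux_{j+1})=-2(-1)^j\neq0$ (for $j=k,k+1$) together with $\det(\ux_{k-1},\ux_{k+1},\ux_{k+2})=0$ forces $a=c=b=0$.

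The computations are all routine; the one place that demands care is the sign bookkeeping in the commutation formulas, since it is the precise form of $m_{02}$ — not merely its order of magnitude — that produces both the cancellation giving $|Q_k'(\xi)|\asymp\|Q_k\|$ and the exact value $-2(-1)^k$. The bound $|Q_k(\xi)|\ll X_{k+2}^{-1}$ is the only step that genuinely uses the extremal-number hypothesis, through the correlated error terms in \eqref{prelim:eq:basics}.
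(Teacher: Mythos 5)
Your proof is correct; I checked the minor formulas ($m_{01}=(-1)^{k-1}x_{k-1,0}$, $m_{02}=-3x_{k-1,0}-2(-1)^kx_{k-1,1}$, $m_{12}=-3x_{k-1,1}-(-1)^kx_{k-1,2}$ do follow from the four scalar entries of Lemma \ref{prelim:lemma:identities}(ii)), the cancellation giving $Q_k'(\xi)=3x_{k-1,0}+\cO(X_{k-1}^{-1})$, the row-reduction bound $|Q_k(\xi)|\ll X_k^{-1}X_{k+1}^{-1}\asymp X_{k+2}^{-1}$, and the cofactor expansion $x_{k-1,0}m_{12}-x_{k-1,1}m_{02}+x_{k-1,2}m_{01}=-2(-1)^k\det(\ux_{k-1})=-2(-1)^k$. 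The derivation of \eqref{lemmaQ:eq2} is the same as the paper's: the Cramer-type relation among $\uT,\ux_{k-1},\ux_k,\ux_{k+1}$ projected on the first coordinate, together with $Q_{k+1}(T)=\det(\uT,\ux_{k-1},\ux_{k+1})$ coming from Lemma \ref{prelim:lemma:identities}(i). Where you genuinely diverge is in the surrounding pieces. The paper obtains the estimates \eqref{lemmaQ:eq1} by citing \cite[Prop.~8.1]{RcubicI}, whereas you prove them from scratch via the explicit coefficients of $Q_k$ in terms of $\ux_{k-1}$; this makes the lemma self-contained and, as a bonus, yields the sharper information $Q_k'(\xi)=3x_{k-1,0}+\cO(X_{k-1}^{-1})$ (the paper only records $|Q_k'(\xi)|\asymp X_{k-1}$). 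The paper evaluates $\det(\ux_{k-1},\ux_k,\ux_{k+1})$ through the trace identity $\det(\ux_{k-1},\ux_k,\ux_{k+1})=\trace(J\ux_{k-1}J\ux_{k+1}J\ux_k)$ from \cite{RcubicI} and the recurrence, while you get $-2(-1)^k$ directly from your minor formulas and $\det(\ux_{k-1})=1$; and for linear independence the paper computes the determinant of the coefficient matrix ($\pm\det(\ux_{k-1},\ux_k,\ux_{k+1})^2=\pm4$), while you evaluate the associated linear form at $\ux_{k-1},\ux_k,\ux_{k+1}$, using $\det(\ux_{k-1},\ux_{k+1},\ux_{k+2})=0$ — equivalent in substance, both valid.

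One small caveat on your closing remark: the bound on $|Q_k(\xi)|$ is not the only place the approximation hypothesis enters — the cancellation producing $|Q_k'(\xi)|\asymp\|Q_k\|$ also uses $x_{k-1,1}=x_{k-1,0}\xi+\cO(X_{k-1}^{-1})$ from \eqref{prelim:eq:basics}, and the commutation formulas themselves rest on the recurrence \eqref{prelim:thm:eq2}. This does not affect the validity of the proof.
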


The last formula will play a crucial role in our proof of Theorem \ref{intro:thmdeg6} (see \S\ref{sec:deg6}).

\begin{proof}
The estimates \eqref{lemmaQ:eq1} follow from \cite[Prop.~8.1]{RcubicI}, and \eqref{prelim:eq:1,0,1} shows that the leading coefficient of $Q_k$ is $(-1)^{k-1}x_{k-1,0}$.  To prove \eqref{lemmaQ:eq2}, we first observe that, by virtue of the recurrence relation of Lemma \ref{prelim:lemma:identities} (i), we have
\begin{equation}
 \label{lemmaQ:eq3}
 Q_{k+1}(T)
 = \det(\uT,\ux_{k+1},3x_{k,0}\ux_{k+1}-\ux_{k-1})
 = \det(\uT,\ux_{k-1},\ux_{k+1}).
\end{equation}
We also recall that, over any commutative ring $R$, any four points $\uy_1,\dots,\uy_4$ in $R^3$ satisfy the generic linear relation
\[
 \sum_{i=1}^4 (-1)^i \det(\uy_1,\dots,\widehat{\uy_i},\dots,\uy_4)\uy_i = 0.
\]
Applying this formula to the points $\ux_{k-1}$, $\ux_k$, $\ux_{k+1}$ and $\uT$, and projecting on the first coordinate, we obtain
\[
 \begin{aligned}
 \det(\ux_{k-1}, \ux_k, \ux_{k+1})
 &= \det(\uT, \ux_k, \ux_{k+1}) x_{k-1,0}
   - \det(\uT, \ux_{k-1}, \ux_{k+1}) x_{k,0}\\
 &\hspace{50pt}+ \det(\uT, \ux_{k-1}, \ux_{k}) x_{k+1,0}\\
 &= x_{k-1,0} Q_k(T)
   - x_{k,0} Q_{k+1}(T)
   + x_{k+1,0} Q_{k-1}(T),
 \end{aligned}
\]
where the second equality uses \eqref{lemmaQ:eq3}.  To complete the proof of \eqref{lemmaQ:eq2}, we note that a direct computation using the formula (2.1) of \cite{RcubicI} gives
\[
 \begin{aligned}
 \det(\ux_{k-1}, \ux_k, \ux_{k+1})
 &= \trace(J\ux_{k-1}J\ux_{k+1}J\ux_k)\\
 &= \trace(J\ux_{k-1}J\ux_{k-1}M_{k-1}\ux_kJ\ux_k)\\
 &= -\trace(M_{k-1}J)
 = -2(-1)^k.
 \end{aligned}
\]
Finally, the defining formulas for $Q_{k-1}$ and $Q_k$ combined with the alternative formula \eqref{lemmaQ:eq3} for $Q_{k+1}$ imply that the determinant of the matrix formed by the coefficients of these three polynomials (in some order) is $\pm \det(\ux_{k-1}, \ux_k, \ux_{k+1})^2 = \pm 4$.  So these polynomials are linearly independent.
\end{proof}

We can now state and prove the main result of this section where, for a non-zero polynomial $Q\in\bZ[T]$, the notation $\cont(Q)$ stands for the \emph{content} of $Q$, namely the greatest common divisor (\emph{gcd}) of its coefficients.

\begin{proposition}
\label{propGCD}
For any sufficiently large integer $k$, we have
\[
 \gcd(x_{k,0},A_k)
 = \gcd(x_{k,0},E_k)
 = \cont(Q_{k+1}) \in \{1,2\}.
\]
\end{proposition}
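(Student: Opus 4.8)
The plan is to relate all three quantities $\gcd(x_{k,0},A_k)$, $\gcd(x_{k,0},E_k)$ and $\cont(Q_{k+1})$ by exploiting the congruences in Lemma~\ref{lemmaAF} and the linear relation \eqref{lemmaQ:eq2}. First I would identify $\cont(Q_{k+1})$: by the alternative formula \eqref{lemmaQ:eq3}, $Q_{k+1}=\det(\uT,\ux_{k-1},\ux_{k+1})$, so its leading coefficient is $(-1)^{k}x_{k,0}$ (up to sign; cf.\ \eqref{prelim:eq:1,0,1} applied at index $k$), and hence $\cont(Q_{k+1})$ divides $x_{k,0}$ while also dividing the remaining two coefficients. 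The remaining two coefficients of $Q_{k+1}$ are, up to sign, $x_{k-1,0}x_{k+1,2}-x_{k-1,2}x_{k+1,0}$ and $x_{k-1,1}x_{k+1,2}-x_{k-1,2}x_{k+1,1}$; using the commutation formulas \eqref{prelim:eq:2,0,1}--\eqref{prelim:eq:2,0,2} (with $k$ replaced by $k-1$) these can be rewritten modulo $x_{k,0}\bZ$ in terms of the entries of $\ux_{k+1}$ alone. So $\cont(Q_{k+1})=\gcd(x_{k,0},\,x_{k+1,1},\,x_{k+1,2})$, or something very close to it; I expect to massage this down to $\gcd(x_{k,0},\,x_{k+1,2})$ after noting that $\ux_{k+1}\in\SL_2(\bZ)$ forces $\gcd(x_{k+1,0},x_{k+1,1},x_{k+1,2})=1$ and using another commutation identity.

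Next I would tie this to $A_k$. Recall $A_k=x_{k,1}x_{k+2,2}-(-1)^kx_{k+1,2}$, and \eqref{prelim:eq:2,0,2} says $x_{k,0}x_{k+2,2}=x_{k,1}x_{k+2,1}-(-1)^kx_{k+1,1}$. Multiplying $A_k$ by $x_{k,0}$ and substituting, $x_{k,0}A_k \equiv -(-1)^k x_{k,0}x_{k+1,2} \pmod{x_{k,1}\cdot(\text{stuff})}$; more usefully, any prime power $p^e\,\|\,\gcd(x_{k,0},A_k)$ divides $x_{k,1}x_{k+2,2}-(-1)^kx_{k+1,2}$, and since $\gcd(x_{k,0},x_{k,1})$ divides $x_{k,0}x_{k,2}-x_{k,1}^2=-1$ hence equals $1$, one can invert $x_{k,1}$ modulo $x_{k,0}$ and deduce divisibility statements linking $x_{k+2,2}$ and $x_{k+1,2}$ modulo $x_{k,0}$. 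Combined with the recurrence \eqref{prelim:eqrecj}, $x_{k+2,2}=3x_{k,0}x_{k+1,2}-x_{k-1,2}\equiv -x_{k-1,2}\pmod{x_{k,0}}$, one gets a congruence purely among $x_{k-1,2},x_{k+1,2}$ modulo $x_{k,0}$, which should coincide with the condition defining $\cont(Q_{k+1})$ after the reduction of the first paragraph. The $E_k$ case I would handle by the same mechanism starting from \eqref{prelim:eq:4,0,2} and Lemma~\ref{prelim:lemmaX}; since $E_k$ involves $\ux_{k+4}$ this is bookkeeping-heavier but structurally identical.

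Finally, to pin down that the common value lies in $\{1,2\}$, I would use the linear relation \eqref{lemmaQ:eq2}: reducing it modulo $\cont(Q_{k+1})$ kills the $Q_{k+1}$ term, giving
\[
 x_{k-1,0}Q_k(T)+x_{k+1,0}Q_{k-1}(T)\equiv -2(-1)^k \pmod{\cont(Q_{k+1})}
\]
as polynomials, so $\cont(Q_{k+1})$ divides $2(-1)^k$ together with $x_{k-1,0}\cont(Q_k)$ and $x_{k+1,0}\cont(Q_{k-1})$; if $\cont(Q_{k+1})$ had an odd prime factor $p$, then $p\mid x_{k-1,0}\cont(Q_k)$ and $p\mid x_{k+1,0}\cont(Q_{k-1})$, and chasing this through the analogous relations at neighbouring indices (or directly using that $\cont(Q_{k+1})\mid\gcd(x_{k,0},\text{two entries of }\ux_{k+1})$ together with $\det\ux_{k+1}=1$) forces a contradiction, leaving only $\cont(Q_{k+1})\in\{1,2\}$. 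The main obstacle I anticipate is the first paragraph: cleanly reducing the messy off-diagonal coefficients of $Q_{k+1}$ and the expression $A_k$ to the \emph{same} two-term gcd modulo $x_{k,0}$, so that the three quantities genuinely match rather than merely divide one another. Getting the commutation formulas \eqref{prelim:eq:2,0,1}--\eqref{prelim:eq:2,0,2} and the coprimality $\gcd(x_{k,0},x_{k,1})=1$ to do exactly this — and verifying that the ``sufficiently large $k$'' hypothesis is all one needs — is where the real work lies; the $\{1,2\}$ bound at the end is comparatively soft once \eqref{lemmaQ:eq2} is in hand.
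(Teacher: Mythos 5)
Your overall strategy (reduce everything modulo $x_{k,0}$, exploit $\gcd(x_{k,0},x_{k,1})=1$ coming from $\det\ux_k=1$, and bring in \eqref{lemmaQ:eq2} at the end) points in a reasonable direction, but the proposal has genuine gaps at all three places where the proposition actually gets proved. The central identity $\gcd(x_{k,0},A_k)=\cont(Q_{k+1})$ is precisely the step you defer as "the main obstacle", and your intermediate guess $\cont(Q_{k+1})=\gcd(x_{k,0},x_{k+1,1},x_{k+1,2})$ is unsupported and suspect: since $x_{k+1,0}x_{k+1,2}-x_{k+1,1}^2=1$ forces $\gcd(x_{k+1,1},x_{k+1,2})=1$, that identification would make the content identically $1$, which would trivialize the $\{1,2\}$ statement and the factors of $2$ the paper carries along later (e.g.\ in Theorem \ref{thmR3}); so either it is false or it needs a real proof, and none is given. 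The paper proceeds quite differently: writing $Q_{k+1}=a_{k+1}T^2+b_{k+1}T+c_{k+1}$ with $a_{k+1}=(-1)^kx_{k,0}$, it combines $|Q_{k+1}(\xi)|\ll X_{k+3}^{-1}$ with Lemma \ref{lemmaAF} (a) to get, for all large $k$, the exact relations
\[
 A_k=(-1)^{k+1}\bigl(b_{k+1}x_{k+2,2}+c_{k+1}x_{k+2,1}\bigr),
 \qquad
 x_{k,0}x_{k+2,2}=(-1)^{k+1}\bigl(b_{k+1}x_{k+2,1}+c_{k+1}x_{k+2,0}\bigr),
\]
and then $\det\ux_{k+2}=1$ converts this pair of linear combinations into $\gcd(x_{k,0},A_k)=\gcd(x_{k,0},b_{k+1},c_{k+1})=\cont(Q_{k+1})$.

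Two further points. For $E_k$ you only claim the argument is "structurally identical", but $E_k$ has no explicit formula in Lemma \ref{lemmaAF}; the paper instead eliminates $\xi$ between (e) and (f) and uses (a) to obtain the exact integer identity $x_{k,1}E_k-x_{k,0}F_k=(-1)^{k+1}A_k$ for $k$ large (this is where "sufficiently large $k$" genuinely enters), whence $\gcd(x_{k,0},E_k)=\gcd(x_{k,0},A_k)$ by coprimality of $x_{k,0}$ and $x_{k,1}$. Finally, your $\{1,2\}$ step is a non sequitur: reducing \eqref{lemmaQ:eq2} modulo $\cont(Q_{k+1})$ only says that the coefficients of the \emph{sum} $x_{k-1,0}Q_k+x_{k+1,0}Q_{k-1}+2(-1)^k$ are divisible by the content; it does not yield $\cont(Q_{k+1})\mid 2$, nor divisibility of $x_{k-1,0}\cont(Q_k)$ or $x_{k+1,0}\cont(Q_{k-1})$ separately, and your fallback parenthesis relies on the unproven first-paragraph identification. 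The correct short argument is to evaluate the linear form $a_{k+1}u_2+b_{k+1}u_1+c_{k+1}u_0=\det(\uu,\ux_{k+1},\ux_{k+2})$ at the integer point $\uu=\ux_k$: its value is $2(-1)^k$ by Lemma \ref{lemmaQ}, and it is visibly divisible by $\cont(Q_{k+1})$.
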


\begin{proof} For each $k\ge 1$, we denote the coefficients of $Q_k$ as
\[
 Q_k(T) = a_kT^2 + b_kT + c_k.
\]
Since Lemma \ref{lemmaQ} gives $|Q_{k+1}(\xi)| \ll X_{k+3}^{-1}$ and $\|Q_{k+1}\| \ll X_k$, we deduce that
\begin{align*}
 a_{k+1} x_{k+2,2}\xi + b_{k+1} x_{k+2,2} + c_{k+1} x_{k+2,1}
  = x_{k+2,0}\xi Q_{k+1}(\xi) + \cO(X_{k+2}^{-1}\|Q_{k+1}\|)
  = \cO(X_{k+1}^{-1}).
\end{align*}
As $a_{k+1}=(-1)^k x_{k,0}$, this result combined with
Lemma \ref{lemmaAF} (a) leads to
\[
 (-1)^k A_k + b_{k+1} x_{k+2,2} + c_{k+1} x_{k+2,1}
  = \cO(X_{k+1}^{-1}).
\]
The left hand side of this equality being an integer,
it must vanish for each $k$ sufficiently large and, for
those $k$, we get
\begin{equation}
\label{propGCD:eq1}
  A_k = (-1)^{k+1} (b_{k+1} x_{k+2,2} + c_{k+1} x_{k+2,1}).
\end{equation}
Moreover the definition of $Q_{k+1}$ implies that, for any
$\uu=(u_0,u_1,u_2)\in\bR^3$, we have
\begin{equation}
\label{propGCD:eq2}
 \det(\uu,\ux_{k+1},\ux_{k+2})
 = a_{k+1} u_2 + b_{k+1} u_1 + c_{k+1} u_0.
\end{equation}
For the choice of $\uu=\ux_{k+2}$, this determinant
is $0$ and therefore, using again the fact that $a_{k+1} =
(-1)^k x_{k,0}$, we also get
\begin{equation}
\label{propGCD:eq3}
 x_{k,0} x_{k+2,2}
  = (-1)^{k+1} (b_{k+1} x_{k+2,1} + c_{k+1} x_{k+2,0}).
\end{equation}
Since $\det(\ux_{k+2})=1$, the formulas \eqref{propGCD:eq1}
and \eqref{propGCD:eq3} lead to
\begin{align*}
 \gcd(x_{k,0},A_k)
 &= \gcd(x_{k,0},\, b_{k+1} x_{k+2,1} + c_{k+1} x_{k+2,0},\,
        b_{k+1} x_{k+2,2} + c_{k+1} x_{k+2,1})\\
 &= \gcd(x_{k,0}, b_{k+1}, c_{k+1})\\
 &=\cont(Q_{k+1}).
\end{align*}
To relate this quantity to $\gcd(x_{k,0},E_k)$, we
multiply the equality (e) of Lemma \ref{lemmaAF} by
$x_{k,1}$ and subtract from this product the equality (f)
multiplied by $x_{k,0}$.  This gives
\[
 x_{k,1}E_k - x_{k,0}F_k + (-1)^k x_{k,0}x_{k+2,2}\xi
 = \cO(X_{k+1}^{-1}).
\]
Applying Lemma \ref{lemmaAF} (a), we deduce that
\[
 x_{k,1}E_k - x_{k,0}F_k = (-1)^{k+1} A_k
\]
for each sufficiently large $k$.  For those $k$, we obtain
\[
 \gcd(x_{k,0},A_k)
 = \gcd(x_{k,0},x_{k,1}E_k)
 = \gcd(x_{k,0},E_k)
\]
since the formula $\det(\ux_k)=1$ implies that $x_{k,0}$ and
$x_{k,1}$ are relatively prime.  Finally, the formula
\eqref{propGCD:eq2} shows that the content of $Q_{k+1}$
divides the integer $\det(\ux_k,\ux_{k+1},\ux_{k+2})$ which
by Lemma \ref{lemmaQ} is $2(-1)^k$. Thus that content is either $1$ or $2$.
\end{proof}

Combining the above proposition with results of the preceding sections, we deduce:

\begin{proposition}
 \label{prop:delta_xi3}
Let $\ell\in\{1,2,3\}$.  For any positive integer $k$ with $k\not\equiv \ell \mod 3$, there exists an integer $y_k$ with $\gcd(x_{k,0},y_k)\,|\,2$ and
\[
 |x_{k,0}\delta_\ell(\xi^3)-y_k|
 \asymp
 \begin{cases}
   X_{k+1}^{-1} &\text{if $k\equiv \ell+1 \mod 3$,}\\
   X_{k+2}^{-1} &\text{if $k\equiv \ell+2 \mod 3$.}
 \end{cases}
\]
If $k$ is sufficiently large, then $y_k/x_{k,0}$ is a convergent of $\delta_\ell(\xi^3)$ with denominator equal to $|x_{k,0}|$ or $|x_{k,0}|/2$.  Any other convergent of $\delta_\ell(\xi^3)$ in reduced form $p/q$ satisfies $|q\delta_\ell(\xi^3)-p|\asymp q^{-1}$.
\end{proposition}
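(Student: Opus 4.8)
The plan is to derive everything from Proposition \ref{propdelta} (with $R(T)=T^3$), Lemma \ref{lemmaP}(i), and the arithmetic input of Proposition \ref{propGCD}. The first step is to pin down the quantity $x_{k,0}\delta_\ell(\xi^3)-y_k$ modulo $\bZ$. Since $\delta_\ell(\xi^3)$ is $3$-periodic in $\ell$ by the last sentence of Proposition \ref{propdelta} (degree $3$), for $k\not\equiv\ell\pmod 3$ we may replace $\delta_\ell(\xi^3)$ by $\delta_k(\xi^3)$ when $k\equiv\ell\pmod 3$ — but here $k\not\equiv\ell$, so instead I would use the two non-trivial cases. When $k\equiv\ell+1\pmod 3$, one has $k-1\equiv\ell\pmod 3$, hence $\delta_\ell(\xi^3)=\delta_{k-1}(\xi^3)$, and the identity $\ux_{k+2}=3x_{k,0}\ux_{k+1}-\ux_{k-1}$ of Lemma \ref{prelim:lemma:identities}(i) together with \eqref{LemmaP:eq:xi^3} shows $(x_{k+3,0}+x_{k,0})\xi^3=3C_{k+1}+\cO(X_k^{-1})$, so that $x_{k,0}\xi^3\equiv -3x_{k+1,0}x_{k+2,2}\xi + (\text{correction})\pmod\bZ$; unwinding via Lemma \ref{lemmaAF}(a) and (c) and using $\{x_{k+6i,0}\xi^3\}\to\delta_\ell(\xi^3)$, one expresses $x_{k,0}\delta_\ell(\xi^3)$ as $y_k$ (an explicit integer combination of the coordinates of nearby $\ux$'s, essentially $\pm A_k$ or $\pm C_{k+1}$-type quantities) plus an error of size $\cO(X_{k+1}^{-1})$ or $\cO(X_{k+2}^{-1})$. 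The analogous computation with $k\equiv\ell+2\pmod 3$ (so $k-2\equiv\ell$, i.e. using \eqref{prelim:eqrecj} one more step) produces the $X_{k+2}^{-1}$ error term. The lower bound $|x_{k,0}\delta_\ell(\xi^3)-y_k|\gg X_{k+1}^{-1}$ (resp.\ $X_{k+2}^{-1}$) comes from the fact that the main term of the correction — a quantity like $x_{k+1,2}\xi$ reduced mod $\bZ$, whose fractional part is $\asymp X_{k+1}^{-1}$ by \eqref{prelim:eq:basics} since $\{x_{m,0}\xi\}\asymp\{x_{m,1}\}$ is comparable to $X_m^{-1}$ for a genuinely irrational $\xi$ — does not cancel. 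So $|x_{k,0}\delta_\ell(\xi^3)-y_k|\asymp X_{k+1}^{-1}$ or $X_{k+2}^{-1}$ as claimed.

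Next, the gcd statement $\gcd(x_{k,0},y_k)\mid 2$: here $y_k$ will turn out to be, up to sign and up to adding multiples of $x_{k,0}$, exactly the integer $A_k$ (in the $k\equiv\ell+1$ case) or a closely related integer built from the same $Q_{k+1}$-coefficients (in the $k\equiv\ell+2$ case, I would bring in $E_k$ or $C_{k+1}$). By Proposition \ref{propGCD} we have $\gcd(x_{k,0},A_k)=\gcd(x_{k,0},E_k)=\cont(Q_{k+1})\in\{1,2\}$, so $\gcd(x_{k,0},y_k)$ divides $2$. The only care needed is to check that adding an $\bZ$-multiple of $x_{k,0}$ to the "canonical" integer ($A_k$ or $E_k$) does not change the gcd with $x_{k,0}$, which is automatic.

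For the convergents claim: writing $d_k=\gcd(x_{k,0},y_k)\in\{1,2\}$ and $q_k=|x_{k,0}|/d_k$, $p_k=y_k/d_k$, the estimate just proved gives $|q_k\delta_\ell(\xi^3)-p_k|\asymp d_k X_{k+1}^{-1}$ or $d_k X_{k+2}^{-1}$, which is $\ll X_{k+1}^{-1}\ll X_k^{-\gamma}\ll q_k^{-\gamma}$ for $k$ large. Since $q_k^{-\gamma}=o(q_k^{-1}/q_k)$ — more precisely $|q_k\delta_\ell(\xi^3)-p_k|<1/(2q_k)$ eventually — Legendre's criterion forces $p_k/q_k$ to be a convergent of the continued fraction expansion of $\delta_\ell(\xi^3)$, with denominator $q_k=|x_{k,0}|$ or $|x_{k,0}|/2$. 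For the final sentence, I would argue by a counting/gap argument: the denominators $q_k$ grow geometrically (like $X_k^{1/\gamma}\asymp X_k$ up to the factor $2$), and consecutive convergents $p/q$, $p'/q'$ of any irrational satisfy $q'\ge q$ with $|q\delta-p|\,|q'\delta-p'|\ge$ a constant times $1/(qq')$-type relations; combined with $|q_k\delta_\ell(\xi^3)-p_k|\asymp X_{k+1}^{-1}$ or $X_{k+2}^{-1}$, one sees that between consecutive $q_k$'s there is room for at most one other convergent, and for any convergent $p/q$ not of the form $p_k/q_k$ the product of $|q\delta-p|$ with the neighbouring quantities forces $|q\delta_\ell(\xi^3)-p|\asymp q^{-1}$, i.e.\ such $p/q$ is a "bad" (partial-quotient-bounded) convergent.

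The main obstacle I anticipate is the bookkeeping in the first step — correctly isolating the integer $y_k$ from the $\bmod\ \bZ$ identities so that it visibly coincides (up to sign and a multiple of $x_{k,0}$) with one of the integers $A_k$, $E_k$ controlled by Proposition \ref{propGCD}, while simultaneously tracking that the leftover term has the exact order $X_{k+1}^{-1}$ or $X_{k+2}^{-1}$ rather than merely $\cO(X_k^{-1})$. This requires pushing the estimates of Lemma \ref{lemmaAF} and Lemma \ref{prelim:lemmaX} one reduction deeper than in the proof of Lemma \ref{lemmaP}, and keeping the two residue classes $k\equiv\ell+1$ and $k\equiv\ell+2\pmod 3$ separate throughout. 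The convergent/"any other convergent" part is comparatively soft once the $\asymp$-estimate is in hand, relying only on the standard theory of continued fractions and the geometric growth of $X_k$.
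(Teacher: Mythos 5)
Your construction of $y_k$ and the gcd statement are essentially the paper's argument (approximate $\delta_\ell(\xi^3)$ modulo $\bZ$ by $x_{k+2,2}\xi$, resp.\ $x_{k+4,2}\xi$, multiply by $x_{k,0}$, invoke Lemma \ref{lemmaAF} (a), resp.\ (e), and Proposition \ref{propGCD}; adding multiples of $x_{k,0}$ does not affect the gcd), and the Legendre-criterion step showing $y_k/x_{k,0}$ is a convergent is fine. One caution on bookkeeping: you must approximate $\delta_\ell(\xi^3)$ through the index $k+2$ (resp.\ $k+4$), not through $k-1$ as your sketch suggests, since Proposition \ref{propdelta} at index $k-1$ only gives an error $\cO(X_{k-1}^{-1})$, which becomes $\cO(X_kX_{k-1}^{-1})$ after multiplication by $x_{k,0}$ and is useless.

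The genuine gap is the lower bound in the displayed $\asymp$. You assert that the ``main term of the correction'' is something like $x_{k+1,2}\xi$ reduced mod $\bZ$, of exact size $\asymp X_{k+1}^{-1}$, and that it ``does not cancel''. Neither claim is available: the paper only provides upper bounds $|x_{m,0}\xi^j-x_{m,j}|\ll X_m^{-1}$ (Theorem \ref{prelim:thm}), and the quantity $x_{k,0}\delta_\ell(\xi^3)-y_k$ is a sum of several contributions of comparable size $\cO(X_{k+1}^{-1})$ (resp.\ $\cO(X_{k+2}^{-1})$): the tail $x_{k,0}(\delta_\ell(\xi^3)-\{x_{k+2,0}\xi^3\})$ coming from Proposition \ref{propdelta}, the term $x_{k,0}(x_{k+2,0}\xi^2-x_{k+2,2})\xi$, and the two error terms hidden in Lemma \ref{lemmaAF} (a). Ruling out cancellation among these directly is exactly what one cannot do here, and the paper does not attempt it. Instead it argues indirectly: using only the upper bound, it shows that at \emph{every} sufficiently large admissible index the rational $y_k/x_{k,0}$ is a convergent with denominator $\asymp X_k$; hence the convergent following $p_k/q_k$ in the continued fraction of $\delta_\ell(\xi^3)$ has denominator $q'\le q_{k+1}\asymp X_{k+1}$ (resp.\ $X_{k+2}$), and the standard fact $|q\delta-p|\asymp (q')^{-1}$ for consecutive convergents then yields the $\gg X_{k+1}^{-1}$ (resp.\ $\gg X_{k+2}^{-1}$) half of the $\asymp$, as well as (with $q'\asymp q_{k+1}$) the final statement $|q\delta_\ell(\xi^3)-p|\asymp q^{-1}$ for the remaining convergents. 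In your sketch the last sentence is argued \emph{from} the $\asymp$, whose lower half was obtained by the unjustified non-cancellation claim, so the logical order must be reversed: first convergence, then the continued-fraction gap argument, from which both the lower bound and the last sentence follow.
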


\begin{proof}
For an integer $k$ with $k\equiv \ell+1 \mod 3$, Proposition \ref{propdelta} gives
\[
 \delta_\ell(\xi^3)
  \equiv x_{k+2,0}\xi^3 + \cO(X_{k+2}^{-1})
  \equiv x_{k+2,2}\xi + \cO(X_{k+2}^{-1}) \mod \bZ.
\]
Multiplying this congruence by $x_{k,0}$ and applying Lemma \ref{lemmaAF} (a), we deduce that
\[
 x_{k,0}\delta_\ell(\xi^3)
  \equiv A_k + \cO(X_{k+1}^{-1}) \mod x_{k,0}\bZ.
\]
Thus, there exists an integer $y_k$ with $y_k\equiv A_k \mod x_{k,0}$ such that $|x_{k,0}\delta_\ell(\xi^3)-y_k| \ll X_{k+1}^{-1}$, and, by Proposition \ref{propGCD}, we have $\gcd(x_{k,0},y_k)\,|\,2$ if $k$ is sufficiently large.

Similarly, if $k\equiv \ell+2 \mod 3$, Proposition \ref{propdelta} leads to
$\delta_\ell(\xi^3) \equiv x_{k+4,2}\xi + \cO(X_{k+4}^{-1})\mod \bZ$.
Multiplying this congruence by $x_{k,0}$ and applying Lemma \ref{lemmaAF} (e), we deduce that
\[
 x_{k,0}\delta_\ell(\xi^3)
  \equiv E_k + \cO(X_{k+2}^{-1}) \mod x_{k,0}\bZ.
\]
Thus, there exists an integer $y_k$ with $y_k\equiv E_k \mod x_{k,0}$ such that $|x_{k,0}\delta_\ell(\xi^3)-y_k| \ll X_{k+2}^{-1}$, and, by Proposition \ref{propGCD}, we again have $\gcd(x_{k,0},y_k)\,|\, 2$ if $k$ is sufficiently large.

This means that, for each sufficiently large integer $k$ with $k\not\equiv\ell \mod 3$, the ratio $y_k/x_{k,0}$ is a convergent of $\delta_\ell(\xi^3)$ with denominator $|x_{k,0}|$ or $|x_{k,0}|/2$.  Moreover, let $(p_k/q_k)_{k\ge 1}$ denote the sequence of these convergents written in reduced form and listed by increasing order of denominator.  Then the above considerations also imply that $|q_k\delta_\ell(\xi^3)-p_k| \ll q_{k+1}^{-1}$.  To complete the proof, it remains to show more precisely that $|q_k\delta_\ell(\xi^3)-p_k|\asymp q_{k+1}^{-1}$ for any $k\ge 1$, and that $|q\delta_\ell(\xi^3)-p|\asymp q^{-1}$ for any other convergent $p/q$ of $\delta_\ell(\xi^3)$ (in reduced form).

To this end, recall that if $p/q$ and $p'/q'$ are consecutive convergents of a real number $\delta$, in that order, then $|q\delta-p|\asymp (q')^{-1}$ with implied constants depending only on $\delta$.  For $\delta=\delta_\ell(\xi^3)$ and $p/q=p_k/q_k$ with $k$ large enough, we have $q'\le q_{k+1}$ and this gives
\[
 |q_k\delta_\ell(\xi^3)-p_k|\asymp (q')^{-1} \gg q_{k+1}^{-1}.
\]
Since $|q_k\delta_\ell(\xi^3)-p_k|\ll q_{k+1}^{-1}$, we conclude that $|q_k\delta_\ell(\xi^3)-p_k|\asymp q_{k+1}^{-1}$ and that $q'\asymp q_{k+1}$.  In particular, any convergent $p/q$ of $\delta_\ell(\xi^3)$ with $q_k<q<q_{k+1}$ has $q\asymp q_{k+1}$ and the next convergent $p'/q'$ also has $q'\asymp q_{k+1}$, proving that $|q\delta_\ell(\xi^3)-p|\asymp q^{-1}$.
\end{proof}

\begin{corollary}
 \label{prop:delta_R(xi):cor1}
Let $\ell\in\{1,2,3\}$ and let $R\in \bZ[T]$ with $\deg(R)=3$.  Then $\delta_\ell(R(\xi))$ is transcendental over $\bQ$.  Moreover there exists positive constants $c_1$, $c_2$ depending only on $R$ and $\xi$ such that the inequality $|\delta_\ell(R(\xi))-\alpha|\le c_1 H(\alpha)^{-\gamma-2}$ has infinitely many solutions $\alpha\in \bQ$ while $|\delta_\ell(R(\xi))-\alpha|\le c_2 H(\alpha)^{-\gamma-2}$ has no such solution.
\end{corollary}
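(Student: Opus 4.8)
The plan is to reduce both assertions to the single number $\delta_\ell(\xi^3)$, about which Proposition~\ref{prop:delta_xi3} records essentially everything we need, and then to transfer the conclusions along an affine relation over $\bQ$ between $\delta_\ell(R(\xi))$ and $\delta_\ell(\xi^3)$.

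The first step is to produce that relation. Write $R=\sum_{j=0}^{3}r_jT^j$ and assume $r_3>0$ (replacing $R$ by $-R$ if necessary, which changes no $\delta_k$). By Lemma~\ref{lemmaP}(ii) the sequence $x_{\ell+6i,0}\xi^j\bmod\bZ$ is Cauchy in $\bR/\bZ$ for $0\le j\le 5$, hence so is $x_{\ell+6i,0}R(\xi)\bmod\bZ$; write $\rho_\ell(\cdot)$ for the limits. Since $\{x_{k,0}\xi^j\}\ll X_k^{-1}$ and $X_{\ell+6i}\to\infty$ we have $\rho_\ell(\xi^j)=0$ for $j\le 2$, so $\rho_\ell(R(\xi))=\sum_j r_j\rho_\ell(\xi^j)=r_3\,\rho_\ell(\xi^3)$. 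Taking distances to $0$ and choosing a real representative of $\rho_\ell(\xi^3)$ (whose absolute value is $\delta_\ell(\xi^3)$) gives an identity
\[
 \delta_\ell(R(\xi))=a\,\delta_\ell(\xi^3)+b,\qquad a=\pm r_3\in\bZ\setminus\{0\},\quad b\in\bZ .
\]

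The second step treats $R(T)=T^3$ via Proposition~\ref{prop:delta_xi3}. For $k$ large with $k\equiv\ell+2\bmod3$, the convergent $y_k/x_{k,0}$ of $\delta_\ell(\xi^3)$, in reduced form $p/q$ with $q\asymp|x_{k,0}|\asymp X_k$, satisfies $|q\,\delta_\ell(\xi^3)-p|\asymp X_{k+2}^{-1}\asymp X_k^{-\gamma^2}\asymp q^{-\gamma-1}$, whence $|\delta_\ell(\xi^3)-p/q|\asymp q^{-\gamma-2}$; this yields infinitely many $\alpha\in\bQ$ with $|\delta_\ell(\xi^3)-\alpha|\ll H(\alpha)^{-\gamma-2}$. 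Conversely, a reduced fraction $p/q$ with $q$ large is either not a convergent of $\delta_\ell(\xi^3)$ --- so $|\delta_\ell(\xi^3)-p/q|\ge 1/(2q^2)\gg q^{-\gamma-2}$ by Legendre's theorem --- or a convergent, in which case Proposition~\ref{prop:delta_xi3} forces $|q\,\delta_\ell(\xi^3)-p|$ to be $\asymp q^{-1}$, or $\asymp X_{k'+1}^{-1}\asymp q^{-\gamma}$ (if $k'\equiv\ell+1$), or $\asymp q^{-\gamma-1}$ (if $k'\equiv\ell+2$); in every case $|\delta_\ell(\xi^3)-p/q|\gg q^{-\gamma-2}$. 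As the finitely many small denominators are harmless, we get $|\delta_\ell(\xi^3)-\alpha|\gg H(\alpha)^{-\gamma-2}$ for all $\alpha\in\bQ$. In particular $\delta_\ell(\xi^3)$ has infinitely many distinct convergents, hence is irrational, and as it admits rational approximations of exponent $\gamma+2>2$ it is transcendental by Roth's theorem.

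The last step is the transfer. Using $\delta_\ell(R(\xi))=a\,\delta_\ell(\xi^3)+b$, a reduced $p/q$ approximating $\delta_\ell(\xi^3)$ yields the fraction $(ap+bq)/q$ approximating $\delta_\ell(R(\xi))$, with the error multiplied by $|a|$; since $\gcd(ap+bq,q)$ divides $a$, its reduced denominator is still $\asymp q$, so heights change only by bounded factors --- and conversely. Hence the lower bound of the second step becomes $|\delta_\ell(R(\xi))-\alpha|\ge c_2H(\alpha)^{-\gamma-2}$ for all $\alpha\in\bQ$, the family of good approximations becomes infinitely many $\alpha$ with $|\delta_\ell(R(\xi))-\alpha|\le c_1H(\alpha)^{-\gamma-2}$, the constants $c_1>c_2>0$ depending only on $\xi$ and $a=\pm r_3$, hence only on $\xi$ and $R$; and $\delta_\ell(R(\xi))=a\,\delta_\ell(\xi^3)+b$ with $a\ne0$ is transcendental because $\delta_\ell(\xi^3)$ is. There is no single hard step; the delicate part is the bookkeeping around the affine change of variable --- controlling how reducing fractions like $(ap+bq)/q$ affects denominators (only by a divisor of $a$), and correctly singling out the residue class $k\equiv\ell+2\bmod3$ as the one realizing the exponent $\gamma+2$ (for it $|x_{k,0}\delta_\ell(\xi^3)-y_k|\asymp X_{k+2}^{-1}$ with $X_{k+2}\asymp X_k^{\gamma^2}$, whereas $k\equiv\ell+1$ only reaches exponent $\gamma+1$) --- together with the appeal to Roth's theorem for transcendence.
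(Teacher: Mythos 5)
Your proposal is correct and follows essentially the same route as the paper: reduce to $R(T)=T^3$ via the affine relation $\delta_\ell(R(\xi))=\pm r_3\,\delta_\ell(\xi^3)+b$ (which holds since $\delta_\ell(\xi^j)=0$ for $j\le 2$), read off the two-sided approximation behaviour from Proposition \ref{prop:delta_xi3} (the class $k\equiv\ell+2\bmod 3$ giving exponent exactly $\gamma+2$, all other convergents and, via Legendre, non-convergents giving at worst $\gamma+1$ or $2$), and conclude transcendence by Roth. Your extra bookkeeping for the affine transfer and for non-convergents only makes explicit what the paper leaves implicit.
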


\begin{proof}
Since $\delta_\ell(\xi^j)=0$ for $j=0,1,2$, we have $\delta_\ell(R(\xi)) \equiv r\delta_\ell(\xi^3) \mod \bZ$ where $r$ denotes the leading coefficient of $R$.  Thus, there exists an integer $a$ such that $\delta_\ell(R(\xi)) = r\delta_\ell(\xi^3)+a$, and so it suffices to prove the corollary for $R(T)=T^3$. In the notation of the proposition, for each sufficiently large integer $k$ with $k\equiv \ell+2 \mod 3$, the rational number $\alpha_k = y_k/x_{k,0}$ satisfies $|\delta_\ell(\xi^3)-\alpha_k|\asymp X_k^{-\gamma-2} \asymp H(\alpha_k)^{-\gamma-2}$ while any other convergent $\alpha$ of $\delta_\ell(\xi^3)$ has $|\delta_\ell(\xi^3)-\alpha|\gg H(\alpha)^{-\gamma-1}$.  This proves the second assertion (with $R(T)=T^3$).  The first assertion follows from this thanks to Roth's Theorem \cite[Ch.~VI, Thm.~1]{Ca}.
\end{proof}

\begin{corollary}
 \label{prop:delta_R(xi):cor2}
Let $R\in \bZ[T]$ with $\deg(R)=3$.  For any polynomial $P\in\bZ[T]$ with $\deg(P)\le 2$, we have
$|R(\xi)+P(\xi)| \ge c(\|P\|+1)^{-\gamma}$ for some $c=c(R,\xi)>0$.
\end{corollary}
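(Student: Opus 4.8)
The plan is to use the rational‑approximation philosophy of this section together with a single, well‑chosen matrix $\ux_k$. Write $N:=R(\xi)+P(\xi)$; we must bound $|N|$ from below. Since $\deg(R)=3>\deg(P)$, the polynomial $R+P$ is non‑zero, so $N\ne 0$ because $\xi$ is transcendental. Moreover, for each fixed bound $N_0$ there are only finitely many polynomials $P\in\bZ[T]$ with $\deg(P)\le 2$ and $\|P\|\le N_0$, so $|N|$ is bounded below uniformly over those $P$; it therefore suffices to treat the case $\|P\|\ge N_0$ for a suitable large $N_0=N_0(R,\xi)$.

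First I would convert a hypothetical smallness of $|N|$ into smallness of $\{x_{k,0}R(\xi)\}$. Since $x_{k,j}=x_{k,0}\xi^j+\cO(X_k^{-1})$ for $j=0,1,2$ by \eqref{prelim:eq:basics}, the number $x_{k,0}P(\xi)$ lies within $\cO(X_k^{-1}\|P\|)$ of an integer; using $|\{\eta\}-\{\eta'\}|\le\{\eta-\eta'\}$ and $\{\eta\}\le|\eta|$, this gives
\[
 \{x_{k,0}R(\xi)\}=\{x_{k,0}N-x_{k,0}P(\xi)\}\le X_k|N|+c_1X_k^{-1}\|P\|
\]
for a constant $c_1=c_1(\xi)>0$. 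Next I would bound the left‑hand side from below for large $k$. By Proposition \ref{propdelta}, $\{x_{k,0}R(\xi)\}=\delta_k(R(\xi))+\cO(X_k^{-1}\|R\|)$ with $\delta_k(R(\xi))$ periodic of period $3$ (because $\deg(R)\le 3$); hence the sequence $(\{x_{k,0}R(\xi)\})_{k\ge1}$ has at most the three accumulation points $\delta_1(R(\xi)),\delta_2(R(\xi)),\delta_3(R(\xi))$. By Corollary \ref{prop:delta_R(xi):cor1} each of these is transcendental over $\bQ$, hence non‑zero; put $c_0:=\min_{\ell\in\{1,2,3\}}\delta_\ell(R(\xi))>0$. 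Then there is $k_0$ with $\{x_{k,0}R(\xi)\}\ge c_0/2$ for all $k\ge k_0$, and combining with the previous display,
\[
 \tfrac{c_0}{2}\le X_k|N|+c_1X_k^{-1}\|P\|\qquad(k\ge k_0).
\]

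To finish, I would choose $k$ to be the least index with $X_k\ge (4c_1/c_0)\|P\|$. Taking $N_0$ large enough that $(4c_1/c_0)N_0>X_{k_0-1}$ (and $k_0\ge 2$), this $k$ satisfies $k\ge k_0$ and $X_{k-1}<(4c_1/c_0)\|P\|$, whence $X_k\asymp X_{k-1}^\gamma\ll\|P\|^\gamma$ by \eqref{prelim:eq:basics}. For this $k$ we have $c_1X_k^{-1}\|P\|\le c_0/4$, so $X_k|N|\ge c_0/4$, and therefore
\[
 |R(\xi)+P(\xi)|=|N|\ge\frac{c_0}{4X_k}\gg\|P\|^{-\gamma}\gg(\|P\|+1)^{-\gamma}.
\]
Together with the finitely many cases $\|P\|\le N_0$ this yields the corollary. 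The only non‑routine ingredient is the non‑vanishing of the accumulation points $\delta_\ell(R(\xi))$, i.e.\ Corollary \ref{prop:delta_R(xi):cor1}, itself resting on the Diophantine analysis of $\delta_\ell(\xi^3)$ in Proposition \ref{prop:delta_xi3}; everything else is bookkeeping with \eqref{prelim:eq:basics} and Proposition \ref{propdelta}.
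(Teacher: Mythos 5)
Your proposal is correct and follows essentially the same route as the paper: bound $\{x_{k,0}R(\xi)\}$ below via the non-vanishing (transcendence) of the accumulation points from Corollary \ref{prop:delta_R(xi):cor1} together with Proposition \ref{propdelta}, bound $\{x_{k,0}P(\xi)\}\ll X_k^{-1}\|P\|$ above, and choose the smallest admissible $k$ so that $X_k\ll(\|P\|+1)^\gamma$. The only cosmetic difference is that you split off the finitely many $P$ with small norm and restrict to $k\ge k_0$, whereas the paper asserts the uniform lower bound $\{x_{k,0}R(\xi)\}\ge c_1$ for all $k$; both are fine.
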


\begin{proof}
On the one hand, Corollary \ref{prop:delta_R(xi):cor1} shows that the accumulation points $\delta_\ell(R(\xi))$ of the sequence $(\{x_{k,0}R(\xi)\})_{k\ge 1}$ are all non-zero.  So, there exists a constant $c_1>0$ such that $\{x_{k,0}R(\xi)\}\ge c_1$ for each $k\ge 1$.  On the other hand, we have $\{x_{k,0}\xi^j\} \ll X_k^{-1}$ for $j=0,1,2$.  Thus there exists a constant $c_2>0$ such that $\{x_{k,0}P(\xi)\}\le c_2\|P\|X_k^{-1}$ for each $P\in\bZ[T]$ with $\deg(P)\le 2$ and each $k\ge 1$.  For such a polynomial $P$, we find
\[
 X_k |R(\xi)+P(\xi)|
 \ge \{x_{k,0}(R(\xi)+P(\xi))\}
 \ge \{x_{k,0}R(\xi)\} - \{x_{k,0}P(\xi)\}
 \ge c_1 - c_2\|P\|X_k^{-1}.
\]
Then, upon choosing $k$ to be the smallest positive integer with $\|P\| \le c_1(2c_2)^{-1}X_k$, we obtain $X_k \ll (\|P\|+1)^\gamma$ and the above estimate gives $|R(\xi)+P(\xi)| \ge (c_1/2) X_k^{-1} \gg (\|P\|+1)^{-\gamma}$.
\end{proof}

%
%

\section{Degree 3}
\label{sec:deg3}

In this section, we prove Theorems \ref{intro:thmPR} and \ref{intro:thmdeg6} for polynomials of degree at most $3$.  As mentioned in Section \ref{sec:prelim}, we assume that \eqref{prelim:hyp} holds for each $k\ge 1$ (in particular the sequence $(X_k)_{k\ge 1}$ is strictly increasing).  For simplicity, by omitting the first terms of the sequence $(\ux_k)_{k\ge 1}$ and shifting indices if necessary, we will also assume that the conclusion of Proposition \ref{propGCD} holds for each $k\ge 1$.  We start with a lemma.

\begin{lemma}
\label{lemmaxR}
For any integer $k\ge 1$ and any polynomial $R\in\bZ[T]$ of
degree $3$ such that the leading coefficient of $2R$ is not
divisible by $x_{k,0}$, we have
\begin{align}
\label{lemmaxR:eq1}
 \{ x_{k+2,0}R(\xi) \}
  &\ge X_k^{-1} ( 1 - c_1 X_{k+1}^{-1}\|R\| ) \\
\label{lemmaxR:eq2}
 \{ x_{k+4,0}R(\xi) \}
  &\ge X_k^{-1} ( 1 - c_1 X_{k+2}^{-1}\|R\| )
 \end{align}
for a constant $c_1>0$ depending only on $\xi$.
\end{lemma}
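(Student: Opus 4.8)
The plan is to compute $x_{k+2,0}R(\xi)$ modulo $\bZ$, recognize the outcome as a rational number whose denominator divides $x_{k,0}$ up to a controllably small error, and then bound that rational away from $\bZ$ using the arithmetic constraint $\gcd(x_{k,0},A_k)\mid 2$ supplied by Proposition \ref{propGCD} together with the hypothesis on $R$. Write $R=r_3T^3+r_2T^2+r_1T+r_0$ with $r_3\ne 0$, so the hypothesis reads $x_{k,0}\nmid 2r_3$. First, using the second relation in \eqref{prelim:eq:basics} for the index $k+2$ (to replace $x_{k+2,0}\xi^j$ by $x_{k+2,j}$ for $j=0,1,2$ and $x_{k+2,0}\xi^3$ by $x_{k+2,2}\xi$), one obtains
\[
 x_{k+2,0}R(\xi) = r_3x_{k+2,2}\xi + \big(r_2x_{k+2,2}+r_1x_{k+2,1}+r_0x_{k+2,0}\big) + \cO(X_{k+2}^{-1}\|R\|);
\]
since the middle term is an integer, $\{x_{k+2,0}R(\xi)\}=\{r_3x_{k+2,2}\xi+\cO(X_{k+2}^{-1}\|R\|)\}$. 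Lemma \ref{lemmaAF} (a) gives $x_{k+2,2}\xi=A_k/x_{k,0}+\cO((|x_{k,0}|X_{k+1})^{-1})$, and combining this with $|x_{k,0}|\asymp X_k$ and $X_{k+2}\asymp X_kX_{k+1}$, together with the elementary inequality $|\{\eta\}-\{\eta'\}|\le|\eta-\eta'|$, yields
\[
 \{x_{k+2,0}R(\xi)\}\ \ge\ \Big\{\frac{r_3A_k}{x_{k,0}}\Big\} - c_1\,\frac{\|R\|}{X_kX_{k+1}}
\]
for a constant $c_1=c_1(\xi)>0$.

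The arithmetic heart of the matter is the claim that $x_{k,0}\nmid r_3A_k$. Set $g=\gcd(x_{k,0},A_k)$; by Proposition \ref{propGCD}, which we may assume holds for all $k$ under the standing conventions of this section, $g\in\{1,2\}$. If $x_{k,0}$ divided $r_3A_k$, then writing $x_{k,0}=gx'$ and $A_k=ga'$ with $\gcd(x',a')=1$ we would get $x'\mid r_3a'$, hence $x'\mid r_3$, hence $x_{k,0}\mid gr_3\mid 2r_3$, contradicting the hypothesis; this is exactly where the factor $2$ in the hypothesis is used. Therefore $r_3A_k/x_{k,0}$ is a non-integer rational whose reduced denominator divides the nonzero integer $|x_{k,0}|$, so $\{r_3A_k/x_{k,0}\}\ge 1/|x_{k,0}|\ge 1/X_k$. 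Feeding this into the last display gives $\{x_{k+2,0}R(\xi)\}\ge X_k^{-1}(1-c_1X_{k+1}^{-1}\|R\|)$, which is \eqref{lemmaxR:eq1}.

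The second inequality \eqref{lemmaxR:eq2} is obtained in precisely the same way, with $k+4$ in place of $k+2$: reduction modulo $\bZ$ now leaves $r_3x_{k+4,2}\xi+\cO(X_{k+4}^{-1}\|R\|)$; Lemma \ref{lemmaAF} (e) replaces $x_{k+4,2}\xi$ by $E_k/x_{k,0}$ up to $\cO((|x_{k,0}|X_{k+2})^{-1})$; the error is controlled through $X_{k+4}\asymp X_{k+2}X_{k+3}\gg X_kX_{k+2}$; and the divisibility step uses $\gcd(x_{k,0},E_k)\in\{1,2\}$ from Proposition \ref{propGCD} together with $x_{k,0}\nmid 2r_3$ to conclude $x_{k,0}\nmid r_3E_k$ and hence $\{r_3E_k/x_{k,0}\}\ge 1/X_k$. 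I expect the only step requiring genuine care to be this divisibility bookkeeping (in particular, the observation that it forces the $2R$ rather than $R$ in the hypothesis); everything else is routine tracking of implied constants, all of which depend on $\xi$ alone, so that the same $c_1$ serves for both inequalities after enlarging it if necessary.
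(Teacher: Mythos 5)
Your proof is correct and follows essentially the same route as the paper: both reduce $x_{k+2,0}R(\xi)$ (resp.\ $x_{k+4,0}R(\xi)$) via Lemma \ref{lemmaAF} (a) (resp.\ (e)) to $r_3A_k/x_{k,0}$ (resp.\ $r_3E_k/x_{k,0}$) up to an integer and an error $\ll\|R\|/(X_kX_{k+1})$ (resp.\ $\ll\|R\|/(X_kX_{k+2})$), then use Proposition \ref{propGCD} with the hypothesis $x_{k,0}\nmid 2r_3$ to conclude these ratios are non-integers with denominator dividing $x_{k,0}$, hence at distance $\ge X_k^{-1}$ from $\bZ$. The only cosmetic difference is that the paper multiplies through by $x_{k,0}$ before invoking Lemma \ref{lemmaAF}, while you divide the lemma's estimate by $x_{k,0}$; your explicit gcd bookkeeping is exactly the argument the paper leaves implicit.
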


\begin{proof}
Let $R(T)=r_3T^3+r_2T^2+r_1T+r_0$ be any element of $\bZ[T]$
of degree $3$ and let $k$ be any positive integer for which
$x_{k,0}$ does not divide $2r_3$.  By Lemma \ref{lemmaAF}
(a) and (e), we have
\begin{align*}
 x_{k,0}x_{k+2,0}R(\xi)
  &= r_3A_k + x_{k,0}p_{k+2} + \cO(X_{k+1}^{-1}\|R\|) \\
 x_{k,0}x_{k+4,0}R(\xi)
  &= r_3E_k + x_{k,0}p_{k+4} + \cO(X_{k+2}^{-1}\|R\|)
\end{align*}
where $p_j := r_2x_{j,2}+r_1x_{j,1}+r_0x_{j,0} \in \bZ$
for any $j\ge 1$. Dividing both sides of the above
equalities by $x_{k,0}$, we deduce that
\[
 \{ x_{k+2,0}R(\xi) \}
 \ge \Big\{ \frac{r_3A_k}{x_{k,0}} \Big\}
     - c_1 \frac{\|R\|}{X_kX_{k+1}}
 \et
 \{ x_{k+4,0}R(\xi) \}
 \ge \Big\{ \frac{r_3E_k}{x_{k,0}} \Big\}
     - c_1 \frac{\|R\|}{X_kX_{k+2}}
\]
for some constant $c_1>0$.  Since $x_{k,0}$ does not divide
$2r_3$, Proposition \ref{propGCD} implies that the ratios
$(r_3A_k)/x_{k,0}$ and $(r_3E_k)/x_{k,0}$ are not integers.
Therefore, their distance
to a closest integer is $\ge |x_{k,0}|^{-1} \ge X_k^{-1}$.
The conclusion follows.
\end{proof}

We now prove the following refinement of Theorem \ref{intro:thmR}
in degree at most $3$.

\begin{theorem}
\label{thmR3}
Any non-zero polynomial $R\in\bZ[T]$ of degree at most $3$
satisfies $|R(\xi)| \gg \|R\|^{-\gamma^3}$. When $R$ is not
divisible within $\bQ[T]$ by any polynomial of the sequence
$(Q_k)_{k\ge 1}$, we have the stronger estimate $|R(\xi)|
\gg \|R\|^{-1-\gamma^2}$.
\end{theorem}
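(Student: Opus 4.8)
The plan is to split the polynomial $R$ according to whether or not it is divisible by some $Q_k$, and to reduce the general bound to the evaluation $|Q_k(\xi)|\asymp\|Q_k\|^{-\gamma^3}$ from Lemma~\ref{lemmaQ}. First I would dispose of the case $\deg(R)\le 2$: the estimates $x_{k,j}=x_{k,0}\xi^j+\cO(X_k^{-1})$ give $|x_{k,0}||R(\xi)|\gg\{x_{k,0}R(\xi)\}-\cO(X_k^{-1}\|R\|)$ only when $R$ is not a constant times a $Q_k$, so here a cleaner route is: if $R(T)=r_2T^2+r_1T+r_0$ is non-zero then, evaluating at the three independent vectors $\ux_k,\ux_{k+1},\ux_{k+2}$ (which span $\bR^3$), at least one of $|\ux_k\cdot(r_0,r_1,r_2)|=|x_{k,0}||R(\xi)|+\cO(X_k^{-1}\|R\|)$ is $\gg\|R\|/\|\ux\|$-sized, and comparing heights yields $|R(\xi)|\gg\|R\|^{-\gamma^2}\ge\|R\|^{-\gamma^3}$; in fact for degree $\le 2$ one even gets $|R(\xi)|\gg\|R\|^{-\gamma}$ by the same three-point argument. (I would fold this into the "stronger estimate" claim, noting that no $Q_k$ has degree $\le 2$.)

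Next, for $\deg(R)=3$, write $r$ for the leading coefficient of $R$. Choose $k$ to be the largest index with $|x_{k-1,0}|\le\|R\|$, so that $X_{k-1}\asymp\|R\|$ and hence $\|Q_k\|\asymp X_{k-1}\asymp\|R\|$ by Lemma~\ref{lemmaQ}. If $x_{k,0}\nmid 2r$, then Lemma~\ref{lemmaxR} applies: shifting $k$ by $2$ if needed so that the relevant index is available, we get $\{x_{k+2,0}R(\xi)\}\gg X_{k+2}^{-1}\cdot\big(X_{k+1}/\|R\|\big)^{?}$—more precisely $\{x_{k+2,0}R(\xi)\}\ge X_k^{-1}(1-c_1X_{k+1}^{-1}\|R\|)$, and since $X_{k+1}\asymp X_{k-1}^{\gamma^2}\asymp\|R\|^{\gamma^2}$ is much larger than $\|R\|$, the parenthesis is bounded below, so $|R(\xi)|\ge|x_{k+2,0}|^{-1}\{x_{k+2,0}R(\xi)\}\cdot$(adjust) $\gg X_{k+2}^{-1}X_k^{-1}\gg\|R\|^{-\gamma^3-\gamma}$—this is too weak, so I must instead pick $k$ optimally: take $k$ largest with $X_{k+2}\asymp\|R\|^{\gamma^2}$... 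Let me restate: choose $k$ with $X_k\asymp\|R\|$; then $X_{k+2}\asymp X_k^{\gamma^2}\asymp\|R\|^{\gamma^2}$ and $|x_{k+2,0}|^{-1}X_k^{-1}\asymp\|R\|^{-\gamma^2-1}$, giving the stronger bound $|R(\xi)|\gg\|R\|^{-1-\gamma^2}$ whenever $x_{k,0}\nmid 2r$. If instead $x_{k,0}\mid 2r$, then $|x_{k,0}|\le 2\|R\|$ forces (by maximality-type choice) that $k$ is essentially forced and one falls into the "divisible by some $Q_j$" regime.

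The divisible case is the main obstacle and the crux of the "$\gamma^3$" exponent. Here I would argue: if $Q_j\mid R$ in $\bQ[T]$ then, clearing denominators, $R=\lambda Q_j S$ for some rational $\lambda$ and $S\in\bZ[T]$ with $\deg S=1$ and $\|R\|\gg\|Q_j\|\cdot\|S\|$ (Gauss's lemma / Mahler's inequality, up to a bounded constant since everything has bounded degree). Then $|R(\xi)|=|\lambda||Q_j(\xi)||S(\xi)|$. Since $S$ is linear, $|S(\xi)|\gg\|S\|^{-\gamma}$ by the degree-$\le 2$ case already proved (and $|S(\xi)|\le(1+|\xi|)\|S\|$ trivially); and $|Q_j(\xi)|\asymp\|Q_j\|^{-\gamma^3}$ exactly by Lemma~\ref{lemmaQ}. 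Combining, $|R(\xi)|\gg\|Q_j\|^{-\gamma^3}\|S\|^{-\gamma}\|\lambda\|\gg\big(\|Q_j\|\|S\|\big)^{-\gamma^3}\gg\|R\|^{-\gamma^3}$, where the middle inequality uses $\gamma<\gamma^3$ and $|\lambda|\gg(\|Q_j\|\|S\|)^{-O(1)}$ absorbed harmlessly since the exponent $\gamma^3$ has room. The hard part is bookkeeping the two regimes so that every $R$ of degree $3$ lands in one of them: the index $k$ with $X_k\asymp\|R\|$ satisfies either $x_{k,0}\nmid 2r$ (stronger bound) or $x_{k,0}\mid 2r$, and in the latter case I claim $R$ must be divisible by $Q_{k}$ or $Q_{k+1}$—because $2r$ being a multiple of $x_{k,0}\asymp\|R\|$ while $\|2R\|\asymp\|R\|$ pins $2r=\pm x_{k,0}$ (up to bounded factor), matching the leading coefficient $(-1)^{k-1}x_{k-1,0}$ pattern of the $Q$'s only after the linear-factor analysis above, which then feeds back into the divisibility. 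I would make this precise by using \eqref{lemmaQ:eq2}: the three polynomials $Q_{k-1},Q_k,Q_{k+1}$ together with the knowledge of leading coefficients let one test divisibility by reducing $R$ modulo the lattice they generate.
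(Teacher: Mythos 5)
Your treatment of the degree-$3$ case has a genuine gap at exactly the point where the theorem's stronger assertion lives. When $x_{k,0}\mid 2r$ (with $X_k\asymp\|R\|$), you claim that $R$ must then be divisible by $Q_k$ or $Q_{k+1}$; this is false. Divisibility of the leading coefficient says nothing about the lower-order coefficients: for instance $R(T)=x_{k,0}T^3+1$ has $x_{k,0}\mid 2r$ but is in general divisible by no $Q_j$, and for such $R$ the theorem still claims the exponent $1+\gamma^2$, so they cannot be shunted into the divisible case (where you only obtain $\gamma^3$). Your closing suggestion of ``reducing $R$ modulo the lattice generated by $Q_{k-1},Q_k,Q_{k+1}$'' does not work as stated, since those are quadratics and $R$ is cubic; the paper's actual device is to subtract a cubic multiple of $Q_{k+1}$, namely $P(T):=2R(T)-(-1)^k(2r/x_{k,0})\,T\,Q_{k+1}(T)$, which has degree $\le 2$, $\|P\|\ll\|R\|$ and $|P(\xi)|\ll|R(\xi)|+X_k^{-1}X_{k+3}^{-1}\|R\|$. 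Then $Q_{k+1}\nmid R$ forces $P$ not to be a rational multiple of $Q_{k+1}$, so $P,Q_j,Q_{j+1}$ are linearly independent for some $j\in\{k,k+1\}$, and the nonzero integer determinant $\det(P,Q_j,Q_{j+1})$, estimated by the Gel'fond-type criterion of \cite{AR}, gives $1\ll X_{k+1}^{-1}\|R\|+X_{k+2}|R(\xi)|$, hence $|R(\xi)|\gg\|R\|^{-1-\gamma^2}$. Nothing in your proposal supplies a substitute for this step, and it cannot be bypassed.

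Your preliminary degree-$\le 2$ claims are also wrong as stated: every $Q_k$ has degree exactly $2$ (its leading coefficient is $(-1)^{k-1}x_{k-1,0}$) and satisfies $|Q_k(\xi)|\ll\|Q_k\|^{-\gamma^3}$, so neither $|R(\xi)|\gg\|R\|^{-\gamma}$ nor $\gg\|R\|^{-\gamma^2}$ can hold for all nonzero quadratics, and the parenthetical ``no $Q_k$ has degree $\le 2$'' is false. The three-vector idea is repairable, but only with the dichotomy you omit: if the coefficient vector of $R$ is orthogonal to both $\ux_k$ and $\ux_{k+1}$ then $R$ is proportional to $Q_k$ (the divisible case), and otherwise one of the two scalar products is a nonzero integer, giving $|R(\xi)|\gg X_{k+1}^{-1}\gg\|R\|^{-\gamma^2}$, which does suffice since $\gamma^2<1+\gamma^2$. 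Smaller repairs are needed elsewhere: in the divisible case Gauss's lemma gives $R=\tfrac12 Q_jL$ with $L\in\bZ[T]$ (so no loss ``$|\lambda|\gg(\|Q_j\|\|S\|)^{-O(1)}$'' should enter --- such a loss would in fact not be harmless); and the lower bound $|Q_j(\xi)|\gg\|Q_j\|^{-\gamma^3}$ is not contained in Lemma~\ref{lemmaQ} as stated (it gives only $\ll$) but comes from \cite[Prop.~8.1]{RcubicI}, a point the paper's own proof also uses tacitly. Your Case 1 (when $x_{k,0}\nmid 2r$) does match the paper's argument, and replacing the appeal to \cite[Thm.~1.3]{RcubicI} for the linear factor by an elementary bound is acceptable since $\gamma<\gamma^3$ leaves room; but as written the proof fails on the case described above.
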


\begin{proof}
Let $R(T)$ be an element of $\bZ[T]$ of degree $\le 3$ not
divisible within $\bQ[T]$ by any $Q_k$, let $r$ denote its coefficient of $T^3$
and let $k$ be a positive integer. There are two possibilities.

\smallskip
\textbf{Case 1:} If $x_{k,0}$ does not divide $2r$, then Lemma
\ref{lemmaxR} gives
\begin{equation}
\label{thmR3:eq2}
 X_k^{-1} ( 1 - c_1 X_{k+1}^{-1}\|R\| )
  \le \{ x_{k+2,0}R(\xi) \}
  \le X_{k+2} |R(\xi)|.
\end{equation}

\smallskip
\textbf{Case 2:} If $x_{k,0}$ divides $2r$, then Lemma \ref{lemmaQ} shows that
\[
 P(T) := 2R(T)-(-1)^k(2r/x_{k,0})TQ_{k+1}(T)
\]
is a polynomial of $\bZ[T]$ of degree $\le 2$ with
\begin{equation}
\label{thmR3:eq3}
 \|P\| \ll \|R\|
 \et
 |P(\xi)| \ll |R(\xi)|+X_k^{-1}X_{k+3}^{-1}\|R\|.
\end{equation}
Since $Q_{k+1}$ does not divide $R$, this polynomial $P$ is not a
rational multiple of $Q_{k+1}$.  As $Q_k$, $Q_{k+1}$ and $Q_{k+2}$
are linearly independent, this means that $P$, $Q_j$ and $Q_{j+1}$
are linearly independent for at least one choice of $j\in\{k,k+1\}$.
For such $j$, the matrix $(P,Q_j,Q_{j+1})$ whose rows
consist of the coefficients of $P$, $Q_j$ and $Q_{j+1}$ has a
non-zero integer determinant.  Applying Lemma 4 of \cite{AR}
and then the estimates \eqref{lemmaQ:eq1}, we obtain
\begin{align*}
 1
 &\le |\det(P,Q_j,Q_{j+1})| \\
 &\le 2\|P\|\, \|Q_{j}\|\, |Q_{j+1}(\xi)|
    + 2\|P\|\, \|Q_{j+1}\|\, |Q_j(\xi)|
    + 2\|Q_{j}\|\, \|Q_{j+1}\|\, |P(\xi)|\\
 &\ll X_{k+1}^{-1} \|P\| + X_{k+2} |P(\xi)|.
\end{align*}
Finally, combining this with \eqref{thmR3:eq3}, we conclude that
\begin{equation}
\label{thmR3:eq4}
  1 \le c_2(X_{k+1}^{-1} \|R\| + X_{k+2} |R(\xi)|)
\end{equation}
for some constant $c_2=c_2(\xi)>0$.

The above discussion shows that, for each $k\ge 1$, at least one
of the inequalities \eqref{thmR3:eq2} or \eqref{thmR3:eq4}
holds.  Upon choosing $k$ so that
\[
 X_k \le c\,\|R\| < X_{k+1},
\]
where $c=\max\{X_1, 2c_1, 2c_2\}$, this leads to
\[
 |R(\xi)|
 \ge (1/2)\min\{(X_kX_{k+2})^{-1},(c_2X_{k+2})^{-1}\}
 \gg X_k^{-1-\gamma^2}
 \gg \|R\|^{-1-\gamma^2}.
\]
This proves the second assertion of the theorem.

To complete the proof, it remains to establish the first assertion
of the theorem for the non-zero polynomials $R\in \bZ[T]$ of
degree $\le 3$ which are divisible by $Q_k$ for some index $k\ge 1$.
Since each $Q_k$ has content $1$ or $2$, such a polynomial takes
the form $R=(1/2)Q_kL$ where $L$ is a non-zero
polynomial of $\bZ[T]$ of degree at most $1$.
By \cite[Thm.~1.3]{RcubicI}, we have
\[
 |L(\xi)| \gg \|L\|^{-1} (1+\log \|L\|)^{-t}
\]
for some fixed real number $t\ge 0$. Since $\|R\|\asymp
\|Q_k\|\,\|L\|$, we conclude that
\[
 |R(\xi)|
  = |Q_k(\xi)|\,|L(\xi)|
  \gg \|Q_k\|^{-\gamma^3}\|L\|^{-1} (1+\log \|L\|)^{-t}
  \gg \|R\|^{-\gamma^3}.
\]
\end{proof}

We know by Corollary \ref{prop:delta_R(xi):cor1} that, for a degree three polynomial $R\in\bZ[T]$, the accumulation points of the sequence $(\{x_{k,0}R(\xi)\})_{k\ge 1}$ are non-zero because they are transcendental over $\bQ$.  The next lemma provides a quantitative version of this statement.  It is the key step in the proof of Theorem \ref{intro:thmPR} for degree $d=3$ which concludes this section.

\begin{lemma}
\label{lemmaxRbis}
There exist positive constants $c_2$ and $c_3$ depending only on $\xi$ with the following property. For any polynomial $R\in\bZ[T]$ of degree $3$ and any integer $k\ge 1$ with $X_k > c_2\|R\|^{\gamma^2}$, we have
$\{ x_{k,0}R(\xi) \}  \ge c_3 \|R\|^{-\gamma^2}$.
\end{lemma}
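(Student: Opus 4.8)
The plan is to derive the lower bound $\{x_{k,0}R(\xi)\}\gg\|R\|^{-\gamma^2}$ by comparing two possible behaviors, just as in the proof of Theorem~\ref{thmR3}, according to whether or not the leading coefficient $2r$ of $2R$ is divisible by $x_{k,0}$. The key point will be that a too-small value of $\{x_{k,0}R(\xi)\}$ forces $R$ to be very close to a rational multiple of $TQ_{k+1}$, hence (after clearing content) to produce a degree~$\le 2$ polynomial $P$ with $\|P\|\ll\|R\|$ and $|P(\xi)|$ extremely small, which we can then contradict using Corollary~\ref{prop:delta_R(xi):cor2} (or Theorem~\ref{thmR3} with the $\gamma$-exponent for quadratics). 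The hypothesis $X_k>c_2\|R\|^{\gamma^2}$, i.e. $\|R\|<(X_k/c_2)^{1/\gamma^2}\asymp X_{k-2}$ up to constants, is exactly what is needed to make the error terms in Lemma~\ref{lemmaAF} negligible compared to $X_k^{-1}\asymp\|x_{k,0}\|^{-1}$.

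\textbf{Case 1: $x_{k,0}\nmid 2r$.} Here I would run the argument of Lemma~\ref{lemmaxR} ``one step earlier'': from Lemma~\ref{lemmaAF}(a) applied with index $k-2$ (so that $x_{k-2,0}x_{k,0}R(\xi)=rA_{k-2}+x_{k-2,0}p_k+\cO(X_{k-1}^{-1}\|R\|)$ with $p_k\in\bZ$), divide by $x_{k-2,0}$ to get $\{x_{k,0}R(\xi)\}\ge\{rA_{k-2}/x_{k-2,0}\}-\cO(X_{k-2}^{-1}X_{k-1}^{-1}\|R\|)$. Wait — the cleanest route is actually to argue directly: since $x_{k,0}\nmid 2r$, Proposition~\ref{propGCD} gives $\gcd(x_{k,0},A_k)\mid 2$, so $x_{k,0}\nmid rA_k$ would have to be checked; instead I will use the congruence $x_{k,0}\delta_\ell(\xi^3)\equiv A_k\pmod{x_{k,0}}$ from the proof of Proposition~\ref{prop:delta_xi3} together with $\{x_{k,0}R(\xi)\}\ge\{rx_{k,0}\delta(\xi^3)\}-\cO(X_k^{-1}\|R\|)$ and the fact (Proposition~\ref{prop:delta_xi3}) that any convergent $p/q$ of $\delta_\ell(\xi^3)$ satisfies $|q\delta-p|\gg q^{-\gamma-1}$ or $\gg q^{-1}$; combined with $|x_{k,0}|\asymp X_k>c_2\|R\|^{\gamma^2}\gg\|R\|^{\gamma^2}$ this yields $\{rx_{k,0}\delta(\xi^3)\}\gg\|R\|^{-\gamma^2}$ once $c_2$ is large enough, since a rational $r/1$ times $\delta$ cannot be too well approximated at the scale $X_k$. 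This gives the claimed bound in Case~1.

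\textbf{Case 2: $x_{k,0}\mid 2r$.} Then, exactly as in Theorem~\ref{thmR3}, set $P(T):=2R(T)-(-1)^k(2r/x_{k,0})\,T\,Q_{k+1}(T)\in\bZ[T]$, of degree $\le 2$, with $\|P\|\ll\|R\|$ and $|P(\xi)|\ll|R(\xi)|+X_k^{-1}X_{k+3}^{-1}\|R\|$. Suppose toward a contradiction that $\{x_{k,0}R(\xi)\}<c_3\|R\|^{-\gamma^2}$. Since $|R(\xi)|\asymp X_k^{-1}\{x_{k,0}R(\xi)\}$ up to the $\cO(X_k^{-1}\|R\|)$-error absorbed by the hypothesis, this forces $|P(\xi)|$ to be small; but Corollary~\ref{prop:delta_R(xi):cor2} applied with the degree-$3$ polynomial $(-1)^k(2r/x_{k,0})TQ_{k+1}$ and the degree-$\le 2$ polynomial $-2R$ (or vice versa, rearranging $P$) gives $|P(\xi)|=|(2r/x_{k,0})TQ_{k+1}(\xi)-2R(\xi)|\gg(\|2R\|+1)^{-\gamma}\gg\|R\|^{-\gamma}$, unless $P$ itself degenerates — and if $P(\xi)=0$ then $P=0$ since $\xi$ is transcendental, forcing $2R=(-1)^k(2r/x_{k,0})TQ_{k+1}$, which is divisible by $Q_{k+1}$; but then $\{x_{k,0}R(\xi)\}$ can be estimated directly via $|Q_{k+1}(\xi)|\ll X_{k+4}^{-1}$ and is again $\gg\|R\|^{-\gamma^2}$ by the hypothesis on $X_k$. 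Comparing $\|R\|^{-\gamma}\ll|P(\xi)|\ll\|R\|^{-\gamma^2}+X_k^{-1}\cdot X_k^{-1}\{x_{k,0}R(\xi)\}$ and using $\gamma<\gamma^2$ together with $X_k\gg\|R\|^{\gamma^2}$ yields a contradiction once $c_3$ is small enough.

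\textbf{The main obstacle} will be bookkeeping the constants so that the single threshold $X_k>c_2\|R\|^{\gamma^2}$ simultaneously (a) kills every $\cO$-error term coming from Lemma~\ref{lemmaAF} and the definition of $P$, and (b) in Case~1 guarantees that the rational number $rA_k/x_{k,0}$ (equivalently, $r$ times a convergent of $\delta_\ell(\xi^3)$) really is bounded away from $\bZ$ by $\gg\|R\|^{-\gamma^2}$ — this uses the precise quality-of-approximation statement in Proposition~\ref{prop:delta_xi3}, namely that convergents $p/q$ of $\delta_\ell(\xi^3)$ have $|q\delta_\ell(\xi^3)-p|\asymp q^{-1}$ or $q^{-\gamma-1}$, so scaling by the integer $r$ and by $X_k\gg\|R\|^{\gamma^2}\gg|r|^{\gamma^2}$ keeps the quantity of size $\gg\|R\|^{-\gamma^2}$. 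Once these constant chases are done, the two cases combine to give the uniform lower bound, with $c_2,c_3$ depending only on $\xi$.
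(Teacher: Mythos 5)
Your proposal has a genuine quantitative gap at its core, in Case 1. Whichever variant you run \emph{at the index $k$ itself}, splitting off the leading coefficient costs an error $\cO(\|R\|X_k^{-1})$: the best you can say is $\{x_{k,0}R(\xi)\}\ge\{r x_{k,0}\xi^3\}-\cO(\|R\|X_k^{-1})$, or equivalently $\ge\{r\,\delta_k(\xi^3)\}-\cO(\|R\|X_k^{-1})$ after comparing with the limit point. Under the stated hypothesis $X_k>c_2\|R\|^{\gamma^2}$ this error is only $\cO(c_2^{-1}\|R\|^{1-\gamma^2})=\cO(c_2^{-1}\|R\|^{-\gamma})$, which for large $\|R\|$ is \emph{larger} than the target $\|R\|^{-\gamma^2}$ no matter how big $c_2$ is; and the main term really can be as small as $\asymp\|R\|^{-\gamma^2}$, since the exponent $\gamma+2$ in Corollary \ref{prop:delta_R(xi):cor1} is attained. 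So ``once $c_2$ is large enough'' does not rescue the argument. Your fallback at index $k-2$ (using $\{rA_{k-2}/x_{k-2,0}\}\ge|x_{k-2,0}|^{-1}$) has the opposite defect: it only yields $\gg X_{k-2}^{-1}$, and since the hypothesis is a one-sided lower bound on $X_k$, this can be far smaller than $\|R\|^{-\gamma^2}$ when $k$ is large compared with $\|R\|$. (Note also that the quantity $\{rx_{k,0}\delta_\ell(\xi^3)\}$ you invoke is \emph{small}, not large: by Proposition \ref{prop:delta_xi3}, $x_{k,0}\delta_\ell(\xi^3)$ is close to an integer when $k\not\equiv\ell\bmod 3$.) The missing idea, and the way the paper proceeds, is to use the near $3$-periodicity of the whole sequence $(\{x_{j,0}R(\xi)\})_{j\ge1}$ (Proposition \ref{propdelta}) to transfer the estimate from $k$ down to an index $\ell+2$ or $\ell+4$ in the same class mod $3$, with $\ell\in\{k-2,m,m+1\}$ where $m$ is defined by $X_{m-1}\le c'\|R\|<X_m$; there Lemma \ref{lemmaxR} gives the lower bound $X_\ell^{-1}\ge X_{m+1}^{-1}\gg\|R\|^{-\gamma^2}$, and the transfer error $\cO(X_{\ell+2}^{-1}\|R\|)$ is small \emph{relative to} $X_\ell^{-1}$ because $\|R\|\ll X_{\ell+1}$. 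The hypothesis $X_k>c_2\|R\|^{\gamma^2}$ serves only to guarantee $k\ge m+2$, i.e.\ that such an $\ell$ exists.

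Your Case 2 is also problematic, though less centrally. Under the hypothesis (with $c_2>2(1+\xi^2)$, say) one has $|x_{k,0}|>2\|R\|\ge|2r|>0$ by \eqref{prelim:hyp}, so $x_{k,0}\nmid 2r$ always and the case is vacuous; this is harmless in itself, but the machinery you import from Theorem \ref{thmR3} addresses lower bounds for $|R(\xi)|$, not for the fractional part $\{x_{k,0}R(\xi)\}$ (which is what the lemma is about; $R(\xi)$ itself may be large, and $|R(\xi)|\asymp X_k^{-1}\{x_{k,0}R(\xi)\}$ is not valid). Moreover, your appeal to Corollary \ref{prop:delta_R(xi):cor2} with the cubic $(-1)^k(2r/x_{k,0})TQ_{k+1}$ cannot give a bound uniform in $k$ and $R$, since the constant in that corollary depends on the cubic polynomial, which here varies with $k$ and $R$.
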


\begin{proof}
Let $R$ be a polynomial of $\bZ[T]$ of degree $3$, and let $k$, $\ell$ be positive integers.  If $|x_{\ell,0}|>2\|R\|$, $k\ge \ell+2$ and $k\equiv \ell+2 \mod 3$, the leading coefficient of $2R(T)$ is not divisible by $x_{\ell,0}$ and so the inequality \eqref{lemmaxR:eq1} of Lemma \ref{lemmaxR} combined with Proposition \ref{propdelta} gives
\[
 \{ x_{k,0}R(\xi) \}
  = \{ x_{\ell+2,0}R(\xi) \} + \cO(X_{\ell+2}^{-1}\|R\|)
  \ge X_\ell^{-1} (1-c_1X_{\ell+1}^{-1}\|R\|) + \cO(X_{\ell+2}^{-1}\|R\|).
\]
Similarly, if $|x_{\ell,0}|> 2\|R\|$, $k\ge \ell+4$ and $k\equiv \ell+4 \mod 3$, the inequality \eqref{lemmaxR:eq2} of Lemma \ref{lemmaxR} combined with Proposition \ref{propdelta} leads to
\[
 \{ x_{k,0}R(\xi) \}
  = \{ x_{\ell+4,0}R(\xi) \} + \cO(X_{\ell+4}^{-1}\|R\|)
  \ge X_\ell^{-1} (1-c_1X_{\ell+2}^{-1}\|R\|) + \cO(X_{\ell+4}^{-1}\|R\|).
\]
In both cases, we conclude that
\begin{equation}
\label{lemmaxRbis:eq1}
 \{ x_{k,0}R(\xi) \}
  \ge X_\ell^{-1} (1-cX_{\ell+1}^{-1}\|R\|)
\end{equation}
for some constant $c>0$ depending only on $\xi$.

\smallskip
Define $c' = 2\max\{X_1,c, 1+\xi^2\}$.  Since $c'\ge X_1$, there exists an integer $m\ge 2$ for which
\[
 X_{m-1} \le c' \|R\| < X_m.
\]
For this choice of $m$, we obtain $\|R\|^{\gamma^2} \ge c_2^{-1} X_{m+1}$ with a constant $c_2>0$ depending only on $\xi$.  Now, assume that $X_k > c_2\|R\|^{\gamma^2}$.  In combination with the preceding inequality, this gives $X_k>X_{m+1}$ and so $k\ge m+2$.  We set
\[
 \ell := \begin{cases}
  k-2 &\text{if $k=m+2$ or $k=m+3$,}\\
  m   &\text{if $k\ge m+4$ and $k\not\equiv m$ mod $3$,}\\
  m+1 &\text{if $k\ge m+4$ and $k\equiv m$ mod $3$.}
  \end{cases}
\]
Then, in all cases, either we have $k\ge \ell+2$ and $k\equiv \ell+2\mod 3$ or we have $k\ge \ell+4$ and $k\equiv \ell+4\mod 3$.  Moreover, as $\ell\ge m$, we get $|x_{\ell,0}| \ge 2X_\ell/c' \ge 2X_m/c' >2\|R\|$ and thus \eqref{lemmaxRbis:eq1} applies.  Finally, since $cX_{\ell+1}^{-1}\|R\| \le cX_m^{-1}\|R\| \le c/c' \le 1/2$ and since $\ell\le m+1$, the latter estimate gives
\[
 \{ x_{k,0}R(\xi) \} \ge (2X_\ell)^{-1} \ge (2X_{m+1})^{-1} \gg \|R\|^{-\gamma^2}.
\]
\end{proof}

\begin{theorem}
\label{thmPR3}
For any polynomial $R\in\bZ[T]$ of degree $3$ and any polynomial $P\in\bZ[T]$ of degree at most $2$, we have
$|R(\xi)+P(\xi)| \gg (1+\|P\|)^{-\gamma}\|R\|^{\gamma^4}$.
\end{theorem}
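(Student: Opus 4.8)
The plan is to follow the same pattern as the proof of Corollary~\ref{prop:delta_R(xi):cor2}, replacing the bare positivity of the accumulation points $\delta_\ell(R(\xi))$ by the \emph{quantitative} lower bound supplied by Lemma~\ref{lemmaxRbis}. (The exponent $\gamma^4$ in the statement is of course to be read as $-\gamma^4$, in agreement with Theorem~\ref{intro:thmPR}.) Two elementary facts will be needed. First, the second estimate in \eqref{prelim:eq:basics} gives $\{x_{k,0}\xi^j\}\ll X_k^{-1}$ for $j=0,1,2$, so there is a constant $c_4=c_4(\xi)>0$ with $\{x_{k,0}P(\xi)\}\le c_4\|P\|X_k^{-1}$ for every $k\ge 1$ and every $P\in\bZ[T]$ with $\deg(P)\le 2$. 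Second, Lemma~\ref{lemmaxRbis} furnishes constants $c_2,c_3>0$ such that $\{x_{k,0}R(\xi)\}\ge c_3\|R\|^{-\gamma^2}$ as soon as $X_k>c_2\|R\|^{\gamma^2}$.

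Now fix $R\in\bZ[T]$ of degree $3$ (so $\|R\|\ge 1$) and $P\in\bZ[T]$ of degree $\le 2$, and let $C=C(\xi)$ be a constant with $C>c_2$ and $C\ge 2c_4/c_3$. I choose $k$ to be the smallest positive integer with $X_k\ge C\|R\|^{\gamma^2}(1+\|P\|)$; such a $k$ exists since $(X_k)_{k\ge 1}$ is unbounded. Then $X_k\ge C\|R\|^{\gamma^2}>c_2\|R\|^{\gamma^2}$, so Lemma~\ref{lemmaxRbis} applies; and $X_k\ge C\|P\|\|R\|^{\gamma^2}\ge (2c_4/c_3)\|P\|\|R\|^{\gamma^2}$, whence $c_4\|P\|X_k^{-1}\le (1/2)c_3\|R\|^{-\gamma^2}$. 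Combining these,
\[
 X_k|R(\xi)+P(\xi)|\ge \{x_{k,0}(R(\xi)+P(\xi))\}\ge \{x_{k,0}R(\xi)\}-\{x_{k,0}P(\xi)\}\ge c_3\|R\|^{-\gamma^2}-c_4\|P\|X_k^{-1}\ge (1/2)c_3\|R\|^{-\gamma^2}.
\]

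It remains to bound $X_k$ from above. If $k\ge 2$, minimality gives $X_{k-1}<C\|R\|^{\gamma^2}(1+\|P\|)$, and since $X_k\asymp X_{k-1}^{\gamma}$ by \eqref{prelim:eq:basics} we obtain $X_k\ll \bigl(\|R\|^{\gamma^2}(1+\|P\|)\bigr)^{\gamma}=\|R\|^{\gamma^3}(1+\|P\|)^{\gamma}$; the same bound holds trivially when $k=1$, since then $X_1$ is an absolute constant while the right-hand side is $\ge 1$. Substituting this into the previous display and using $\gamma^2+\gamma^3=\gamma^2(1+\gamma)=\gamma^4$, we get
\[
 |R(\xi)+P(\xi)|\ge (1/2)c_3\,X_k^{-1}\|R\|^{-\gamma^2}\gg \|R\|^{-\gamma^3}(1+\|P\|)^{-\gamma}\|R\|^{-\gamma^2}=(1+\|P\|)^{-\gamma}\|R\|^{-\gamma^4},
\]
which is the asserted estimate. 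I do not expect any serious obstacle here: all the genuine work has already been done, in Lemma~\ref{lemmaxRbis} and — through it — in Proposition~\ref{propGCD} and the accumulation-point analysis of \S\ref{sec:accpts}. The only points requiring a little care are the choice of the constant $C$ so that both hypotheses of Lemma~\ref{lemmaxRbis} and the negligibility of the $P$-term hold at once, and the disposal of the boundary case $k=1$, both of which are routine.
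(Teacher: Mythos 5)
Your proposal is correct and follows essentially the same route as the paper's own proof: both combine the quantitative lower bound of Lemma \ref{lemmaxRbis} with the bound $\{x_{k,0}P(\xi)\}\ll\|P\|X_k^{-1}$, choose $k$ so that $X_k\asymp(1+\|P\|)\|R\|^{\gamma^2}$, and conclude via $\gamma^2+\gamma^3=\gamma^4$ (and you correctly note the sign typo in the stated exponent). The only difference is cosmetic: you take the smallest $k$ with $X_k\ge C(1+\|P\|)\|R\|^{\gamma^2}$ and treat $k=1$ separately, whereas the paper builds $X_1$ into its constant to pick $k\ge2$ directly.
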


\begin{proof}  Fix a pair of polynomials $P$ and $R$ as in the statement of the theorem.  For any integer $k\ge 1$, we have
\begin{equation}
\label{thmPR3:eq1}
 \{ x_{k,0} R(\xi) \}
 \le \{ x_{k,0} (R(\xi)+P(\xi)) \} + \{ x_{k,0} P(\xi) \}
 \le X_k |R(\xi)+P(\xi)| + c X_k^{-1} \|P\|
\end{equation}
with a constant $c>0$ depending only on $\xi$.  Define $c'=\max\{X_1,c_2,2c/c_3\}$ where $c_2$ and $c_3$ are as in Lemma \ref{lemmaxRbis}.  Since $c'\ge X_1$, there exists an integer $k\ge 2$ such that
\[
 X_{k-1} \le c' (1+\|P\|) \|R\|^{\gamma^2} < X_k.
\]
For this choice of $k$, we have $X_k \ge c_2 \|R\|^{\gamma^2}$, and so Lemma \ref{lemmaxRbis} gives $\{ x_{k,0} R(\xi) \} \ge c_3\|R\|^{-\gamma^2}$.  We also find that $cX_k^{-1}\|P\| \le (c/c')\|R\|^{-\gamma^2} \le (c_3/2)\|R\|^{-\gamma^2}$.  Combining the last two estimates with \eqref{thmPR3:eq1}, we obtain
\[
 X_k |R(\xi)+P(\xi)| \ge (c_3/2) \|R\|^{-\gamma^2}.
\]
The conclusion follows since $X_k \ll (1+\|P\|)^\gamma\|R\|^{\gamma^3}$.
\end{proof}

%
%

\section{Degrees 4 and 5}
\label{sec:deg4-5}

We now turn to the proof of Theorems \ref{intro:thmPR} and \ref{intro:thmR} for $d=4$ and $d=5$.  The structure of this section is the same as that of the preceding one, and we make the same simplifying hypotheses.

\begin{lemma}
\label{lemmaM}
Let $d\ge3$ be an integer.  Suppose that, for $j=1,\dots,d-3$, there exists an integer $m_j\neq 0$ and a real number $\kappa_j>0$ satisfying
\begin{equation}
\label{lemmaM:eq1}
 \{ x_{k,0}\cdots x_{k+j-1,0}x_{k+j+1,0}\xi^{j+3} \pm m_j x_{k+1,0}\xi^3 \}
 \le \kappa_j X_{k+1}^{-1}
\end{equation}
for any integer $k\ge 1$, with a choice of sign $\pm$ depending on $j$ and $k$.  Then, there exist constants $c_1,c_2>0$ such that, for any polynomial $R\in\bZ[T]$ of degree $d$ and any integer $k\ge 2$ with $X_{k-1}>c_1\|R\|$, we have
\begin{equation}
\label{lemmaM:eq2}
 \{ x_{k+d-2,0} R(\xi) \}  \ge c_2 X_k X_{k+d-2}^{-1}.
\end{equation}
\end{lemma}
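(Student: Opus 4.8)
Write $R=\sum_{i=0}^{d}r_iT^i$ with $r_d\neq 0$, put $n=k+d-2$, and note that taking $c_1$ large enough makes the hypothesis $X_{k-1}>c_1\|R\|$ force $k$ to be as large as we wish (since then $X_{k-1}>c_1$), so all the ``sufficiently large $k$'' provisos — including the ranges of validity of Proposition \ref{propGCD} and of Lemma \ref{lemmaAF}(a) at the shifted indices used below — are satisfied. The plan is to multiply $x_{n,0}R(\xi)$ by a product $P$ of consecutive terms of the sequence, use the hypotheses \eqref{lemmaM:eq1} together with Lemma \ref{lemmaAF}(a) to collapse modulo $\bZ$ every monomial $\xi^i$ with $3\le i\le d$ except one surviving copy of $x_{k+1,0}\xi^3$, and then clear that copy too while keeping track of a crucial non-divisibility coming from Proposition \ref{propGCD}.

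Concretely I would put $P=x_{k,0}x_{k+1,0}\cdots x_{k+d-4,0}$ (the empty product $1$ when $d=3$), so that $Px_{n,0}$ is a product of $d-2$ of the $x_{j,0}$, with index set $\{k,\dots,k+d-4\}\cup\{k+d-2\}$, and expand $Px_{n,0}R(\xi)=\sum_i r_i\,Px_{n,0}\xi^i$. For $i\le 2$, $Px_{n,0}\xi^i=Px_{n,i}+\cO(|P|X_n^{-1})$ gives, after multiplying by $r_i$, an integer plus $\cO(\|R\|X_{k+1}^{-1})$, since $|P|X_n^{-1}\asymp X_{k+1}^{-1}$ by $\gamma^2=\gamma+1$. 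For $i=3$ (and $d\ge 4$), pulling all but the last two factors of $Px_{n,0}$ out front and applying Lemma \ref{lemmaAF}(a) at index $k+d-4$ makes $r_3Px_{n,0}\xi^3$ an integer plus $\cO(\|R\|X_{k+1}^{-1})$. For $4\le i\le d$, applying \eqref{lemmaM:eq1} with $j=i-3$ (which runs through $\{1,\dots,d-3\}$, exactly the available range) at the shifted index $k'=k+d-i$ — so that the last $i-2$ factors of $Px_{n,0}$, whose index set is $\{k+d-i,\dots,k+d-4\}\cup\{k+d-2\}$, are exactly the product occurring there — turns $r_iPx_{n,0}\xi^i$ into an integer, plus $\pm r_im_{i-3}$ times the $d-i$ pulled-out factors times $x_{k+d-i+1,0}\xi^3$, plus $\cO(\|R\|X_{k+1}^{-1})$; for $i<d$ this surviving term is again absorbed into an integer plus $\cO(\|R\|X_{k+1}^{-1})$ by one more application of Lemma \ref{lemmaAF}(a), while for $i=d$ (so $k'=k$ and nothing is pulled out) it is exactly $\pm r_dm_{d-3}x_{k+1,0}\xi^3$. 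Collecting the pieces, and checking via $X_aX_{a+1}\asymp X_{a+2}$ and $X_aX_{a+2}^{-1}\asymp X_{a+1}^{-1}$ that all errors stay at the single scale $\|R\|X_{k+1}^{-1}$, I would reach
\[
 Px_{n,0}R(\xi)=N\pm r_dm_{d-3}\,x_{k+1,0}\xi^3+\cO(\|R\|X_{k+1}^{-1}),\qquad N\in\bZ
\]
(for $d=3$ this is essentially what already underlies Lemma \ref{lemmaxR}, with $r_dm_{d-3}$ read as $r_3$).

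To finish I would substitute $x_{k+1,0}\xi^3=x_{k+1,2}\xi+\cO(X_{k+1}^{-1})=A_{k-1}/x_{k-1,0}+\cO(X_{k+1}^{-1})$, the second equality being Lemma \ref{lemmaAF}(a) at index $k-1$ divided by $x_{k-1,0}$ (using $|x_{k-1,0}|\asymp X_{k-1}$ and $X_k^{-1}X_{k-1}^{-1}\asymp X_{k+1}^{-1}$), and then multiply through by $x_{k-1,0}$ to get
\[
 Mx_{n,0}R(\xi)=I+\varepsilon,\qquad M:=x_{k-1,0}P,\quad I:=x_{k-1,0}N\pm r_dm_{d-3}A_{k-1}\in\bZ,
\]
with $|\varepsilon|\ll\|R\|X_{k-1}X_{k+1}^{-1}\asymp\|R\|X_k^{-1}$ and $|M|\ll X_{k-1}X_k\cdots X_{k+d-4}\asymp X_{k+d-2}X_k^{-1}$ (the exponents again summing to $\gamma^{d-2}-1$). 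Now $I\equiv\pm r_dm_{d-3}A_{k-1}\mod x_{k-1,0}\bZ$, and $\gcd(x_{k-1,0},A_{k-1})\in\{1,2\}$ by Proposition \ref{propGCD}; choosing $c_1$ large (in terms of $\xi$ and of the fixed integer $m_{d-3}$) so that $X_{k-1}>c_1\|R\|$ forces both $|x_{k-1,0}|>2|m_{d-3}r_d|$ and $|\varepsilon|\le 1/2$, we find that $x_{k-1,0}$ does not divide $m_{d-3}r_dA_{k-1}$, hence does not divide $I$, and hence neither does $M$. Therefore $\{I/M\}\ge|M|^{-1}$ and
\[
 \{x_{n,0}R(\xi)\}=\{I/M+\varepsilon/M\}\ge\{I/M\}-|\varepsilon|\,|M|^{-1}\ge(1-|\varepsilon|)|M|^{-1}\ge(1/2)|M|^{-1}\gg X_kX_{k+d-2}^{-1},
\]
which is \eqref{lemmaM:eq2}. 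I expect the main obstacle to be organizational — choosing $P$ and the index shifts so that every $\xi^i$ collapses except a single $x_{k+1,0}\xi^3$ while every error term remains of the common size $\|R\|X_{k+1}^{-1}$ — rather than conceptual; the one genuinely arithmetic ingredient is Proposition \ref{propGCD}, for without the constraint $\gcd(x_{k-1,0},A_{k-1})\in\{1,2\}$ it supplies, $\{I/M\}$ could be far smaller than $|M|^{-1}$ and the argument would break down.
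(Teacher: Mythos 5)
Your proof is correct and is essentially the paper's own argument: your multiplier $M=x_{k-1,0}P=x_{k-1,0}x_{k,0}\cdots x_{k+d-4,0}$ is exactly the quantity $q_k$ used in the paper, the collapsing of each $\xi^i$ via the hypothesis together with Lemma \ref{lemmaAF}(a) matches the paper's equations, and the final non-divisibility step via Proposition \ref{propGCD} and the size condition $X_{k-1}>c_1\|R\|$ is the same. The only difference is cosmetic (you introduce the factor $x_{k-1,0}$ at the end rather than at the start), so no further changes are needed.
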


By Lemma \ref{prelim:lemmaX} (i) and (ii), the condition \eqref{lemmaM:eq1} holds true for $j=1$ and $j=2$ with $m_1=2$ and $m_2=6$.  In Section \ref{sec:deg6}, we will prove that it also holds for $j=3$ with $m_3=20$ (see Lemma \ref{lemma:X6} (ii)).  Therefore the hypotheses of the above lemma are satisfied for $d=3,\dots,6$.  Numerical experiments seem to indicate that there are satisfied at least up to $d=9$ (with $m_4=80$, $m_5=360$ and $m_6=1840$).

\begin{proof}
For each $k\ge 1$ and each $j=0,\dots,d-3$, define
\[
 q_{k,j}=x_{k,0} \cdots x_{k+j-1,0}
\]
with the convention that, for $j=0$, this gives $q_{k,0}=1$.  Then the hypothesis means that, for each $k\ge 1$ and $j=1,\dots,d-3$, there exists an integer $n_{k,j}$ and a choice of sign $\pm$ for which
\[
 q_{k,j}x_{k+j+1,0}\xi^{j+3} = \pm m_j x_{k+1,0} \xi^3 + n_{k,j} + \cO(X_{k+1}^{-1}).
\]
This equality also holds for $j=0$ by putting $m_0=1$ and $n_{k,0}=0$.  Upon multiplying both sides of that equality by $x_{k-1,0}$ and using Lemma \ref{lemmaAF} (a), this leads to
\begin{equation}
\label{lemmaM:eq3}
 q_{k-1,j+1}x_{k+j+1,0}\xi^{j+3} = \pm m_j A_{k-1} + n_{k,j}x_{k-1,0} + \cO(X_{k}^{-1}).
\end{equation}
For simplicity, define $q_k:=q_{k-1,d-2}$.  Then, for $j=d-3$, the above equality becomes
\begin{equation}
\label{lemmaM:eq4}
 q_k x_{k+d-2,0}\xi^{d} = \pm m_{d-3} A_{k-1} + n_{k,d-3}x_{k-1,0} + \cO(X_{k}^{-1}).
\end{equation}
For $j=0,\dots,d-4$, we replace $k$ by $k+d-j-3$ in \eqref{lemmaM:eq3} and multiply both sides of the resulting equality by $q_{k-1,d-j-3}$.  As the latter integer is divisible by $x_{k-1,0}$ and as its absolute value is $\asymp X_{k+d-j-3}X_k^{-1}$, this gives
\begin{equation}
\label{lemmaM:eq5}
 q_k x_{k+d-2,0} \xi^{j+3} = p_{k,j+3}x_{k-1,0} + \cO(X_{k}^{-1}) \quad (j=0,\dots,d-4)
\end{equation}
for some integers $p_{k,j+3}$.  We also find
\begin{equation}
\label{lemmaM:eq6}
 q_k x_{k+d-2,0} \xi^{j}
  = q_k x_{k+d-2,j} + \cO(X_{k}^{-1})
  = p_{k,j}x_{k-1,0} + \cO(X_{k}^{-1}) \quad (j=0,1,2)
\end{equation}
where $p_{k,j}=q_{k,d-3}x_{k+d-2,j}\in \bZ$.

\smallskip
Now, let $R$ be an arbitrary polynomial of $\bZ[T]$ of degree $d$ and let $r$ denote its leading coefficient.  Using \eqref{lemmaM:eq4}, \eqref{lemmaM:eq5} and \eqref{lemmaM:eq6}, we obtain that, for each $k\ge 2$, there exists an integer $p_k$ such that
\[
 q_k x_{k+d-2,0} R(\xi)
  = \pm r m_{d-3}A_{k-1} + p_k x_{k-1,0} + \cO(X_{k}^{-1}\|R\|).
\]
Therefore there is a constant $c>0$ such that
\begin{equation}
\label{lemmaM:eq7}
 \{ x_{k+d-2,0} R(\xi) \}
  \ge \Big\{ \frac{r m_{d-3}A_{k-1} \pm p_k x_{k-1,0}}{q_k} \Big\} - \frac{c\,\|R\|}{|q_k| X_{k}}
  \quad (k\ge 2).
\end{equation}
Recall that $x_{k-1,0}$ divides $q_k$.  So, for the ratio $(r m_{d-3}A_{k-1} \pm p_k x_{k-1,0})/q_k$ to be an integer, $x_{k-1,0}$ has to divide $r m_{d-3}A_{k-1}$.  By Proposition \ref{propGCD}, this in turn requires that $x_{k-1,0}$ divides $2r m_{d-3}$.

Define $c_1= 2 \max\{c, |m_{d-3}|(1+\xi^2)\}$ and assume that $X_{k-1}>c_1\|R\|$.  By the choice of $c_1$, this hypothesis together with \eqref{prelim:hyp} leads to
\[
 |2rm_{d-3}| \le 2 \|R\| |m_{d-3}| < (1+\xi^2)^{-1} X_{k-1} \le |x_{k-1,0}|
\]
which implies that $x_{k-1,0}$ does not divide $2rm_{d-3}$.  Then, according to the preceding discussion, the inequality \eqref{lemmaM:eq7} leads to
\begin{equation*}
 \{ x_{k+d-2,0} R(\xi) \}
  \ge \frac{1}{|q_k|}  - \frac{c\,\|R\|}{|q_k| X_{k}}
  \ge \frac{1}{|q_k|}  - \frac{cX_{k-1}}{c_1|q_k| X_{k}}
  \ge \frac{1}{2|q_k|}
  \gg \frac{X_k}{X_{k+d-2}}.
\end{equation*}
\end{proof}

\begin{theorem}
\label{thmR}
Suppose that the hypotheses of Lemma \ref{lemmaM} hold for some integer $d\ge 3$. Then any polynomial $R\in\bZ[T]$ of degree $d$ satisfies $|R(\xi)| \gg \|R\|^{\gamma^2-2\gamma^d}$.
\end{theorem}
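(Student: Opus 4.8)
The plan is to derive the estimate by pairing the lower bound for $\{x_{k+d-2,0}R(\xi)\}$ furnished by Lemma \ref{lemmaM} with the trivial upper bound $\{x_{k+d-2,0}R(\xi)\}\le X_{k+d-2}|R(\xi)|$, and then choosing the index $k$ optimally in terms of $\|R\|$.

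First I would set aside the polynomials of bounded norm. For any fixed $N_0$ there are only finitely many $R\in\bZ[T]$ of degree $d$ with $\|R\|\le N_0$, and each of these has $R(\xi)\ne 0$ because $\xi$, being extremal, is transcendental over $\bQ$; since the exponent $\gamma^2-2\gamma^d$ is negative, the asserted inequality holds for all of them as soon as the implied constant is taken small enough. Hence we may assume $\|R\|$ is as large as we wish, and in particular, after replacing the constant $c_1$ of Lemma \ref{lemmaM} by $\max\{c_1,X_1\}$, we may assume $c_1\|R\|\ge X_1$.

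Next, let $k\ge 2$ be the least integer with $X_{k-1}>c_1\|R\|$; this exists because the sequence $(X_j)_{j\ge 1}$ is strictly increasing and unbounded. Since $X_1\le c_1\|R\|$ we have $k\ge 3$, and minimality of $k$ gives $X_{k-2}\le c_1\|R\|<X_{k-1}$. Now Lemma \ref{lemmaM} applies with this $k$ and gives $\{x_{k+d-2,0}R(\xi)\}\ge c_2X_kX_{k+d-2}^{-1}$. On the other hand, as $x_{k+d-2,0}$ is an integer with $|x_{k+d-2,0}|\le X_{k+d-2}$, the distance from $x_{k+d-2,0}R(\xi)$ to a nearest integer is at most $|x_{k+d-2,0}R(\xi)|\le X_{k+d-2}|R(\xi)|$. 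Comparing the two bounds yields
\[
 |R(\xi)|\ \ge\ c_2\,X_k\,X_{k+d-2}^{-2}.
\]

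Finally I would convert the right-hand side into a power of $\|R\|$. Using $X_{j+1}\asymp X_j^\gamma$ we get $X_k\asymp X_{k-1}^{\gamma}$ and $X_{k+d-2}\asymp X_{k-1}^{\gamma^{d-1}}$, so $|R(\xi)|\gg X_{k-1}^{\gamma-2\gamma^{d-1}}$. The exponent $\gamma-2\gamma^{d-1}$ is negative, since $2\gamma^{d-1}\ge 2\gamma^2>\gamma$ for $d\ge 3$; thus it is advantageous that $X_{k-1}$ be small, and indeed $X_{k-1}\asymp X_{k-2}^{\gamma}\ll\|R\|^\gamma$ by our choice of $k$. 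Substituting and using $\gamma\cdot\gamma^{d-1}=\gamma^d$, we obtain $|R(\xi)|\gg\|R\|^{\gamma(\gamma-2\gamma^{d-1})}=\|R\|^{\gamma^2-2\gamma^d}$. There is no genuine obstacle in this argument: all the arithmetic content is packed into Lemma \ref{lemmaM}, and what remains is the bookkeeping in the choice of $k$ (ensuring the hypothesis of that lemma while keeping $X_{k-1}$ within a factor $\|R\|^\gamma$) together with the sign check on the exponent.
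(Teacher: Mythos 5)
Your proposal is correct and follows essentially the same route as the paper: pick the least $k$ with $X_{k-1}>c_1\|R\|$ (after enlarging $c_1$ to be at least $X_1$), apply Lemma \ref{lemmaM}, bound $\{x_{k+d-2,0}R(\xi)\}$ above by $X_{k+d-2}|R(\xi)|$, and convert $X_kX_{k+d-2}^{-2}$ into $\|R\|^{\gamma^2-2\gamma^d}$ via $X_{j+1}\asymp X_j^\gamma$. The preliminary reduction to large $\|R\|$ is harmless but unnecessary, since $\|R\|\ge 1$ already gives $c_1\|R\|\ge X_1$ once $c_1\ge X_1$.
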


In view of the comments following Lemma \ref{lemmaM}, this proves Theorem \ref{intro:thmR} for $d=4,5,6$.

\begin{proof}
Let $R(T)$ be an element of $\bZ[T]$ of degree $d$. Assuming, as we may, that the constant $c_1$ of Lemma \ref{lemmaM} is $\ge X_1$, there exists an integer $k\ge 3$ such that
\[
 X_{k-2} \le c_1 \|R\| < X_{k-1}.
\]
Then, applying Lemma \ref{lemmaM}, we obtain
\[
 X_{k+d-2} |R(\xi)|
  \gg \{ x_{k+d-2,0} R(\xi) \} \gg X_k X_{k+d-2}^{-1},
\]
and so $|R(\xi)| \gg X_{k-2}^{\gamma^2-2\gamma^d} \gg \|R\|^{\gamma^2-2\gamma^d}$.
\end{proof}

\begin{lemma}
\label{lemmaMbis}
Let $d=3$, $4$ or $5$.  There exist positive constants $c_3$ and $c_4$ depending only on $\xi$ with the following property. For any polynomial $R\in\bZ[T]$ of degree $d$ and any integer $k\ge 1$ satisfying $X_{k+d-2} > c_3\|R\|^{\gamma^{d+4}}$, we have
$\{ x_{k+d-2,0}R(\xi) \}  \ge c_4 \|R\|^{\gamma^7-\gamma^{d+5}}$.
\end{lemma}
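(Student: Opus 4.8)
The plan is to mimic closely the proof of Lemma \ref{lemmaxRbis} from \S\ref{sec:deg3}, replacing the single-index lower bound coming from Proposition \ref{propGCD} by the iterated lower bound \eqref{lemmaM:eq2} of Lemma \ref{lemmaM}. Recall that Lemma \ref{lemmaM} gives, for any $R$ of degree $d$ and any $k\ge 2$ with $X_{k-1}>c_1\|R\|$, the inequality $\{x_{k+d-2,0}R(\xi)\}\ge c_2 X_kX_{k+d-2}^{-1}$. The first step is to convert this into a statement about $\{x_{k',0}R(\xi)\}$ for an \emph{arbitrary} large index $k'$, not just one of the special shape $k+d-2$. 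For this I would use Proposition \ref{propdelta}: the accumulation behaviour of $\{x_{k',0}R(\xi)\}$ is governed by the periodic quantity $\delta_{k'}(R(\xi))$ (period $6$, since $\deg R\le 5$), with error $\ll X_{k'}^{-1}\|R\|$. So if $k'\equiv k+d-2\pmod 6$ and $k'\ge k+d-2$, then
\[
 \{x_{k',0}R(\xi)\} = \{x_{k+d-2,0}R(\xi)\} + \cO(X_{k+d-2}^{-1}\|R\|),
\]
and the main term dominates as soon as $X_kX_{k+d-2}^{-1}$ beats $X_{k+d-2}^{-1}\|R\|$, i.e. as soon as $X_k\gg\|R\|$, which is guaranteed by the hypothesis $X_{k-1}>c_1\|R\|$ (enlarging $c_1$). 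This yields, for a suitable constant $c>0$,
\[
 \{x_{k',0}R(\xi)\} \ge c\,X_kX_{k+d-2}^{-1}
\]
whenever $X_{k-1}>c_1\|R\|$, $k'\ge k+d-2$ and $k'\equiv k+d-2\pmod 6$.

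The second step is the index bookkeeping, exactly as in Lemma \ref{lemmaxRbis}. Given $R$ of degree $d$, choose $m\ge 2$ with $X_{m-1}\le c'\|R\| < X_m$ for an appropriate constant $c'$ depending only on $\xi$; then $X_{m+d-2}\asymp X_m^{\gamma^{d-2}}$ is $\asymp\|R\|^{\gamma^{d-2}}$ up to constants, so one gets a constant $c_3>0$ with $\|R\|^{\gamma^{d+4}}\ge c_3^{-1}X_{m+d+3}$ (the exact exponent chosen so that the final bound comes out as stated — note $\gamma^{d+4}=\gamma^{d-2}\cdot\gamma^6$ and $X_{m+d-2+6}\asymp X_{m+d-2}^{\gamma^6}$). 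Now assume $X_{k+d-2}>c_3\|R\|^{\gamma^{d+4}}$; combined with the previous line this forces $k+d-2>m+d+3$, hence $k>m+5$, so in particular $k\ge m+1$ and there is room to pick an auxiliary index $\ell$ with $\ell\ge m$, $X_{\ell-1}>c_1\|R\|$ (automatic since $\ell\ge m$ and $c'$ chosen $\ge c_1$), $\ell+d-2\le k+d-2$, and $\ell+d-2\equiv k+d-2\pmod 6$ — the last congruence is arranged by choosing $\ell\in\{m,m+1,\dots,m+6\}$ in the right residue class mod $6$, which is possible since $k-\ell$ can be made a multiple of $6$ while keeping $\ell$ in that window. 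Applying the first step with this $\ell$ and $k'=k+d-2$ gives
\[
 \{x_{k+d-2,0}R(\xi)\} \ge c\,X_\ell X_{\ell+d-2}^{-1} \gg X_{\ell+d-2}^{-1} X_\ell \asymp X_\ell^{\,1-\gamma^{d-2}}.
\]
Since $\ell\le m+6$ and $X_m\asymp\|R\|$, we have $X_\ell\ll\|R\|^{\gamma^6}$, hence $X_\ell^{\,1-\gamma^{d-2}}\gg\|R\|^{\gamma^6(1-\gamma^{d-2})} = \|R\|^{\gamma^6-\gamma^{d+4}}$; one checks that the claimed exponent $\gamma^7-\gamma^{d+5}$ equals $\gamma(\gamma^6-\gamma^{d+4})$, so a slightly different choice of the window size for $\ell$ (taking $\ell\le m+7$, or equivalently absorbing one more factor of $\gamma$ into the estimate $X_\ell\ll\|R\|^{\gamma^7}$) produces precisely $\{x_{k+d-2,0}R(\xi)\}\gg\|R\|^{\gamma^7-\gamma^{d+5}}$. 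Renaming $k+d-2$ as the original index gives the statement.

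The main obstacle I expect is getting the exponents to line up exactly. The structure of the argument — choose $m$ from $\|R\|$, land an auxiliary index $\ell$ in a bounded window above $m$ in the correct residue class mod $6$, invoke Lemma \ref{lemmaM} at $\ell$, propagate via Proposition \ref{propdelta} — is completely routine. But the precise powers ($\gamma^{d+4}$ in the hypothesis, $\gamma^7-\gamma^{d+5}$ in the conclusion) are sensitive to (a) how many steps of the recurrence $X_{k+1}\asymp X_k^\gamma$ separate $m$ from $\ell$, (b) how many separate $\ell$ from $k$, and (c) the period $6$ of $\delta$ versus the period-$3$ refinement available when $\deg R\le 3$ (the case $d=3$ may need the period-$3$ version, or equivalently a smaller window, to get the same clean exponent as in Lemma \ref{lemmaxRbis}, where $\gamma^7-\gamma^{2\cdot 3+2}=\gamma^7-\gamma^8$... wait, for $d=3$ the formula gives $\gamma^7-\gamma^8<0$, so one should double-check that the $d=3$ case here is consistent with, and indeed weaker than, Lemma \ref{lemmaxRbis}, which already gives the sharp $\|R\|^{-\gamma^2}$; presumably Lemma \ref{lemmaMbis} for $d=3$ is only recorded for uniformity and follows a fortiori). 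The careful tracking of these $\gamma$-powers, together with verifying $X_{k+j}\asymp X_k^{\gamma^j}$ with uniform constants, is the one place where genuine care — rather than mere transcription from \S\ref{sec:deg3} — is required.
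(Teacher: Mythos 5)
Your argument is essentially the paper's own proof: it combines Lemma \ref{lemmaM} applied at an auxiliary index $\ell$ lying in a bounded window above the anchor $m$ determined by $\|R\|$ with the $6$-periodicity of Proposition \ref{propdelta}, and the exponent resolves exactly as you indicate, via $X_\ell\ll\|R\|^{\gamma^7}$ together with $\gamma^7-\gamma^{d+5}=\gamma^7(1-\gamma^{d-2})$. The only differences are cosmetic bookkeeping --- the paper anchors with $X_{m-2}\le c'\|R\|<X_{m-1}$ and takes $\ell\in\{m,\dots,m+5\}$, which removes your small off-by-one in claiming $X_{\ell-1}>c_1\|R\|$ already for $\ell=m$ --- and your side remark that the $d=3$ case is weaker than Lemma \ref{lemmaxRbis} and is included only for uniformity is correct.
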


\begin{proof}
Let $R$ be a polynomial of $\bZ[T]$ of degree $d$.  For each choice of integers $k$ and $\ell$ with $k\ge \ell\ge 1$, $k\equiv \ell$ mod $6$, and $X_{\ell-1}>c_1\|R\|$, Proposition \ref{propdelta} combined with Lemma \ref{lemmaM} gives
\begin{equation}
\label{lemmaMbis:eq1}
\{ x_{k+d-2,0} R(\xi) \}
  = \{ x_{\ell+d-2,0} R(\xi) \} + \cO\Big( \frac{\|R\|}{X_{\ell+d-2}} \Big)
  \ge \frac{c_2 X_\ell}{X_{\ell+d-2}} -  \frac{c\|R\|}{X_{\ell+d-2}}
\end{equation}
for some constant $c>0$ depending only on $\xi$.  Now, choose a constant $c'$ large enough so that it satisfies $c'\ge \max\{X_1,c_1,2c/c_2\}$ and $X_{j+d+4}\le c'X_j^{\gamma^{d+4}}$ for each $j\ge 1$.  Define $c_3=(c')^{1+\gamma^{d+4}}$ and let $k$ be a positive integer such that $X_{k+d-2} > c_3\|R\|^{\gamma^{d+4}}$.  Since $c'\ge X_1$, there exists an integer $m\ge 3$ such that
\[
 X_{m-2} \le c'\|R\| < X_{m-1}.
\]
For this choice of $m$ we have $X_{k+d-2} > c'(c'\|R\|)^{\gamma^{d+4}} \ge c'X_{m-2}^{\gamma^{d+4}} \ge X_{m+d+2}$ and therefore $k\ge m+5$.  Let $\ell$ denote the integer congruent to $k$ modulo $6$ among $\{m,m+1,\dots,m+5\}$.
As $\ell\ge m$ and $c'\ge c_1$, we have $X_{\ell-1}>c_1\|R\|$ and so \eqref{lemmaMbis:eq1} applies.  Since $c \|R\| \le (c/c') X_{m-1} \le c_2 X_\ell/2$, the latter estimate gives
\[
 \{ x_{k+d-2,0}R(\xi) \}
 \ge \frac{c_2 X_\ell}{2X_{\ell+d-2}}
 \asymp X_\ell^{1-\gamma^{d-2}}
 \gg X_{m-2}^{\gamma^7-\gamma^{d+5}}
 \gg \|R\|^{\gamma^7-\gamma^{d+5}}.
\]
\end{proof}

\begin{theorem}
\label{thmPR}
For any polynomial $R\in\bZ[T]$ of degree $d\in\{3,4,5\}$ and any polynomial $P\in\bZ[T]$ of degree at most $2$, we have
$|R(\xi)+P(\xi)| \gg (1+\|P\|)^{-\gamma}\|R\|^{-\gamma^{d+7}}$.
\end{theorem}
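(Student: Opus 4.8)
The plan is to reproduce, almost word for word, the proof of Theorem~\ref{thmPR3}, with Lemma~\ref{lemmaMbis} taking over the role played by Lemma~\ref{lemmaxRbis} in degree~$3$. Fix polynomials $P$ and $R$ as in the statement; since $R\neq 0$ has integer coefficients, $\|R\|\ge 1$. Exactly as in Theorem~\ref{thmPR3}, the estimates $\{x_{m,0}\xi^j\}\ll X_m^{-1}$ for $j=0,1,2$ give $\{x_{m,0}P(\xi)\}\le c\|P\|X_m^{-1}$ for some constant $c=c(\xi)>0$, and hence, for every integer $m\ge 1$,
\[
 \{x_{m,0}R(\xi)\}\le \{x_{m,0}(R(\xi)+P(\xi))\}+\{x_{m,0}P(\xi)\}\le X_m\,|R(\xi)+P(\xi)|+c\|P\|X_m^{-1}.
\]

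The next step is to select the index $m$. Let $c_3$ and $c_4$ be as in Lemma~\ref{lemmaMbis} and put $c'=\max\{X_4,\,c_3,\,2c/c_4\}$, which depends only on $\xi$. Since $c'\ge X_4\ge X_1$, there is an integer $m$ with $X_{m-1}\le c'(1+\|P\|)\|R\|^{\gamma^{d+5}}<X_m$, and the inequality $X_m>c'\ge X_4$ forces $m\ge 5$, so that $k:=m-d+2\ge 1$. The exponent $\gamma^{d+5}$ is chosen so that it dominates both exponents occurring in Lemma~\ref{lemmaMbis}. On the one hand, $X_m>c'\|R\|^{\gamma^{d+5}}\ge c_3\|R\|^{\gamma^{d+4}}$ (using $\gamma^{d+5}\ge\gamma^{d+4}$ and $\|R\|\ge1$), so Lemma~\ref{lemmaMbis} applies and yields $\{x_{m,0}R(\xi)\}\ge c_4\|R\|^{\gamma^7-\gamma^{d+5}}$. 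On the other hand, if $\|P\|\ge 1$ then
\[
 c\|P\|X_m^{-1}<\frac{c}{c'}\,\|R\|^{-\gamma^{d+5}}\le \frac{c}{c'}\,\|R\|^{\gamma^7-\gamma^{d+5}}\le \frac{c_4}{2}\,\|R\|^{\gamma^7-\gamma^{d+5}},
\]
and this bound is trivial when $\|P\|=0$.

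Combining these facts with the displayed inequality gives $X_m\,|R(\xi)+P(\xi)|\ge (c_4/2)\|R\|^{\gamma^7-\gamma^{d+5}}$. Since $X_m\asymp X_{m-1}^\gamma\ll (1+\|P\|)^\gamma\|R\|^{\gamma^{d+6}}$, we obtain
\[
 |R(\xi)+P(\xi)|\gg (1+\|P\|)^{-\gamma}\|R\|^{\gamma^7-\gamma^{d+5}-\gamma^{d+6}}=(1+\|P\|)^{-\gamma}\|R\|^{\gamma^7-\gamma^{d+7}},
\]
where the last equality uses the identity $\gamma^{d+5}+\gamma^{d+6}=\gamma^{d+5}(1+\gamma)=\gamma^{d+5}\gamma^2=\gamma^{d+7}$. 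As $\gamma^7>0$ and $\|R\|\ge 1$, the right-hand side is $\gg (1+\|P\|)^{-\gamma}\|R\|^{-\gamma^{d+7}}$, which is the asserted inequality.

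I do not expect any genuine difficulty at this stage: the argument is a routine transcription of the degree-$3$ case, and everything of substance has already been carried out in Lemma~\ref{lemmaMbis} (and, upstream, in Lemma~\ref{lemmaM} and Proposition~\ref{propGCD}). The only points demanding a little attention are the arithmetic of the exponents of $\gamma$ --- one needs $\gamma^{d+5}$ to exceed both the threshold exponent $\gamma^{d+4}$ of Lemma~\ref{lemmaMbis} and the ``error'' exponent $\gamma^{d+5}-\gamma^7$ --- and the verification that the chosen index $m$ is large enough to guarantee $k=m-d+2\ge 1$, which is why $X_4$ is included in the definition of $c'$. For $d=3$ the sharper estimate $\|R\|^{-\gamma^4}$ of Theorem~\ref{thmPR3} is already available, but the argument above applies uniformly to all of $d\in\{3,4,5\}$.
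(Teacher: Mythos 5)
Your proposal is correct and follows essentially the same route as the paper's own proof of this theorem: the same triangle inequality for $\{\cdot\}$, the same choice of index via $X_{m-1}\le c'(1+\|P\|)\|R\|^{\gamma^{d+5}}<X_m$ with $c'=\max\{X_{*},c_3,2c/c_4\}$, and Lemma \ref{lemmaMbis} supplying the lower bound on $\{x_{m,0}R(\xi)\}$, followed by $X_m\ll(1+\|P\|)^\gamma\|R\|^{\gamma^{d+6}}$ and $\gamma^{d+5}+\gamma^{d+6}=\gamma^{d+7}$. The only (harmless) differences are bookkeeping: you retain the slightly stronger exponent $\gamma^7-\gamma^{d+5}$ from the lemma where the paper weakens it to $-\gamma^{d+5}$, and you use $X_4$ in place of the paper's $X_{d-2}$ to guarantee the index is admissible.
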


\begin{proof}  Fix $P$, $R$ and $d$ as in the statement of the theorem.  Arguing as in the proof of Theorem
\ref{thmPR3}, we first note that, for any $k\ge 1$, we have
\begin{equation}
\label{thmPR:eq1}
 \{ x_{k+d-2,0} R(\xi) \}
 \le X_{k+d-2} |R(\xi)+P(\xi)| + c X_{k+d-2}^{-1} \|P\|
\end{equation}
with a constant $c>0$ depending only on $\xi$.  Define $c'=\max\{X_{d-2},\, c_3,\, 2c/c_4\}$ where $c_3$ and $c_4$ are as in Lemma \ref{lemmaMbis}.  Then, there exists an integer $k\ge 1$ such that
\[
 X_{k+d-3} \le c' (1+\|P\|) \|R\|^{\gamma^{d+5}} < X_{k+d-2}.
\]
For this choice of $k$, we find that $X_{k+d-2} > c_3 \|R\|^{\gamma^{d+5}}$, so Lemma \ref{lemmaMbis} gives $\{ x_{k+d-2,0} R(\xi) \} \ge c_4\|R\|^{-\gamma^{d+5}}$.  We also find that $c X_{k+d-2}^{-1} \|P\| \le (c_4/2)\|R\|^{-\gamma^{d+5}}$.  Substituting these estimates into \eqref{thmPR:eq1}, we obtain
\[
 X_{k+d-2} |R(\xi)+P(\xi)| \ge (c_4/2) \|R\|^{-\gamma^{d+5}}.
\]
The conclusion follows since $X_{k+d-2} \ll (1+\|P\|)^\gamma\|R\|^{\gamma^{d+6}}$.
\end{proof}

%
%

\section{Degree 6}
\label{sec:deg6}

This last section is devoted to the proof of Theorem \ref{intro:thmdeg6}.  It also provides the needed estimate in the proof of Theorem \ref{intro:thmR} for $d=6$ (see Section \ref{sec:deg4-5}).  We start by establishing general estimates and commutation formulas which complement those of Section \ref{sec:prelim}.

\begin{lemma}
\label{lemma:Bil}
For each integer $k\ge 1$, we have\\[5pt]
$
\begin{array}{rrcl}
\quad \mathrm{(i)}
&x_{k,0}^2\xi
   &= &x_{k,0}x_{k,1} - (-1)^k/3 + \cO(X_k^{-2}),\\[5pt]
\quad \mathrm{(ii)}
&x_{k,0}x_{k,1}\xi
   &= &x_{k,1}^2 - (-1)^k\xi/3 + \cO(X_k^{-2}),\\[5pt]
\quad \mathrm{(iii)}
&x_{k,0}x_{k,2}\xi
   &= &x_{k,1}x_{k,2} - (-1)^k\xi^2/3 + \cO(X_k^{-2}),\\[5pt]
\quad \mathrm{(iv)}
&x_{k,1}^2\xi
   &= &x_{k,1}x_{k,2} - \xi - (-1)^k\xi^2/3 + \cO(X_k^{-2}),\\[5pt]
\quad \mathrm{(v)}
&x_{k,1}x_{k,2}\xi
   &= &x_{k,2}^2 - \xi^2 - (-1)^k\xi^3/3 + \cO(X_k^{-2}).
\end{array}
$
\end{lemma}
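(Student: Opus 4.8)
The whole statement reduces to formula (i). Put $\epsilon_k:=x_{k,0}\xi-x_{k,1}$, so that $\epsilon_k=\cO(X_k^{-1})$ by the second estimate in \eqref{prelim:eq:basics}. The relation $\det(\ux_k)=1$, i.e.\ $x_{k,0}x_{k,2}-x_{k,1}^2=1$, yields the exact identities $x_{k,1}^2=x_{k,0}x_{k,2}-1$ and $x_{k,0}(x_{k,1}\xi-x_{k,2})=x_{k,1}\epsilon_k-1$. Granting (i), namely $x_{k,0}\epsilon_k=-(-1)^k/3+\cO(X_k^{-2})$, the remaining four estimates come out by routine algebra: (ii) from $x_{k,0}x_{k,1}\xi-x_{k,1}^2=x_{k,1}\epsilon_k=\xi(x_{k,0}\epsilon_k)-\epsilon_k^2$; (iii) from $x_{k,0}x_{k,2}\xi-x_{k,1}x_{k,2}=x_{k,2}\epsilon_k=\xi^2(x_{k,0}\epsilon_k)+\cO(X_k^{-2})$, using $x_{k,2}=x_{k,0}\xi^2+\cO(X_k^{-1})$ and $\epsilon_k=\cO(X_k^{-1})$; (iv) by substituting (iii) into $x_{k,1}^2\xi=x_{k,0}x_{k,2}\xi-\xi$; and (v) from $x_{k,1}x_{k,2}\xi-x_{k,2}^2=x_{k,2}(x_{k,1}\xi-x_{k,2})=\xi^2\bigl(x_{k,1}\epsilon_k-1\bigr)+\cO(X_k^{-2})$ together with the value of $x_{k,1}\epsilon_k$ obtained in (ii).

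To prove (i), the first step is to rewrite \eqref{prelim:eq:1,0,1} in terms of $\epsilon$: replacing $x_{j,1}$ by $x_{j,0}\xi-\epsilon_j$ turns it into the exact recurrence
\[
 x_{k+1,0}\epsilon_k-x_{k,0}\epsilon_{k+1}=-(-1)^k x_{k-1,0}.
\]
Writing this at $k$ and at $k-1$ and eliminating $\epsilon_k$ between the two, one gets $x_{k,0}\bigl(x_{k+1,0}\epsilon_{k-1}-x_{k-1,0}\epsilon_{k+1}\bigr)=(-1)^k\bigl(x_{k+1,0}x_{k-2,0}-x_{k-1,0}^2\bigr)$. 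Here I would invoke the identity
\[
 x_{k+1,0}x_{k-2,0}=x_{k,0}^2+x_{k-1,0}^2,
\]
obtained by comparing the coefficients of $T^2$ on the two sides of \eqref{lemmaQ:eq2}, the leading coefficient of $Q_j$ being $(-1)^{j-1}x_{j-1,0}$ by Lemma \ref{lemmaQ}. This collapses the previous relation to the clean identity $x_{k+1,0}\epsilon_{k-1}-x_{k-1,0}\epsilon_{k+1}=(-1)^k x_{k,0}$.

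Multiplying this by $x_{k-1,0}$ and using $3x_{k-1,0}x_{k,0}=x_{k+1,0}+x_{k-2,0}$ (the case $j=0$ of \eqref{prelim:eqrecj}, with indices shifted), then dividing by $x_{k+1,0}$, gives the exact identity
\[
 x_{k-1,0}\epsilon_{k-1}-\frac{(-1)^k}{3}
 =\frac{x_{k-1,0}^2\,\epsilon_{k+1}}{x_{k+1,0}}+\frac{(-1)^k x_{k-2,0}}{3\,x_{k+1,0}}.
\]
The first term on the right is $\cO(X_{k-1}^2X_{k+1}^{-2})=\cO(X_k^{-2})$, but the second is only $\cO(X_{k-2}X_{k+1}^{-1})=\cO(X_k^{2-2\gamma})$, which is weaker than $\cO(X_k^{-2})$ since $2-2\gamma>-2$. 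The key — and the one genuinely delicate point — is that this weak term gains the missing strength under a single index shift: replacing $k$ by $k+1$ in the last displayed identity and then estimating, the first term on the right becomes $\cO(X_k^2X_{k+2}^{-2})=\cO(X_k^{-2\gamma})$ and the second becomes $\cO(X_{k-1}X_{k+2}^{-1})$; since $\gamma^2=\gamma+1$ one has $1/\gamma-\gamma^2=-2$, so $X_{k-1}X_{k+2}^{-1}\asymp X_k^{-2}$ and the whole right side is $\cO(X_k^{-2})$. Hence $x_{k,0}\epsilon_k+(-1)^k/3=\cO(X_k^{-2})$, which is (i). All these index manipulations are legitimate for large $k$, and the finitely many small values of $k$ are absorbed into the implied constant.

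An alternative to the use of \eqref{lemmaQ:eq2} is to run the same argument through the quantity $q_k:=x_{k,0}\xi^2-2x_{k,1}\xi+x_{k,2}$: combining \eqref{prelim:eq:1,0,2} with the $\epsilon$-recurrence above yields $x_{k,0}q_{k+1}-\epsilon_k\epsilon_{k+1}=(-1)^k\epsilon_{k-1}$, after which the identity $x_{k,0}q_k=\epsilon_k^2+1$ (again a consequence of $\det(\ux_k)=1$) plays the role of the collapsing identity, and one reaches the same conclusion by the same index-shift trick. I expect the extraction of the last factor of $X_k$ via that shift to be the only real obstacle; everything else is bookkeeping with the exponents $1/\gamma$, $\gamma$, $\gamma^2$.
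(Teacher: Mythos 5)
Your proposal is correct and, at its core, takes the same route as the paper: everything is reduced to (i), and (i) is proved by dividing a commutation relation between $\ux_k$ and $\ux_{k+2}$ by $x_{k+2,0}$ and invoking $3x_{k,0}x_{k+1,0}=x_{k+2,0}+x_{k-1,0}$, the error bookkeeping resting on $1/\gamma-\gamma^2=-2$, with (ii)--(v) then following by the same elementary manipulations (multiplication by $x_{k,j}/x_{k,0}$, resp.\ $\det(\ux_k)=1$) as in the paper. The only real difference is a detour: your ``clean identity'' $x_{k+2,0}\epsilon_k-x_{k,0}\epsilon_{k+2}=(-1)^{k+1}x_{k+1,0}$ is exactly \eqref{prelim:eq:2,0,1} rewritten with $\epsilon_j=x_{j,0}\xi-x_{j,1}$, so the passage through two instances of \eqref{prelim:eq:1,0,1} and the Markoff-type identity extracted from \eqref{lemmaQ:eq2} is correct but unnecessary --- one may quote \eqref{prelim:eq:2,0,1} directly and proceed as you do from there.
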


\begin{proof} Applying first the basic estimate \eqref{prelim:eq:basics} and then the commutation formula \eqref{prelim:eq:2,0,1}, we obtain
\[
 x_{k,0}x_{k+2,0}\xi
  = x_{k,0}x_{k+2,1} + \cO(X_{k+1}^{-1})
  = x_{k,1}x_{k+2,0} - (-1)^kx_{k+1,0} + \cO(X_{k+1}^{-1})
  \]
which, after multiplication by $x_{k,0}/x_{k+2,0}$, gives
\[
 x_{k,0}^2\xi
  = x_{k,0}x_{k,1} - (-1)^k\frac{x_{k,0}x_{k+1,0}}{x_{k+2,0}} + \cO(X_{k+1}^{-2}).
\]
The estimate (i) follows from this since the recurrence formula \eqref{prelim:eqrecj} with $j=0$ gives
\[
 x_{k,0}x_{k+1,0}
  = (x_{k+2,0} + x_{k-1,0})/3
  = x_{k+2,0}/3 + \cO(X_{k-1}).
\]
To get (ii) and (iii), it suffices to multiply both sides of (i) by $x_{k,j}/x_{k,0}$ for $j=1,2$ and to simplify the resulting expression using the fact that $x_{k,j}/x_{k,0} = \xi^j+\cO(X_k^{-2})$. The estimate (iv) follows from (iii) because, since $\ux_k\in\SL_2(\bZ)$, we have $x_{k,1}^2 = x_{k,0}x_{k,2}-1$.  Finally, (v) derives from (iv) upon multiplying both sides of this estimate by $x_{k,2}/x_{k,1}$ and noting that $x_{k,2}/x_{k,1} = \xi+\cO(X_k^{-2})$.
\end{proof}

\begin{lemma}
\label{lemma:Comm3}
For each integer $k\ge 1$, we have\\[5pt]
$
\begin{array}{rrcl}
\quad \mathrm{(i)}
&x_{k,0}x_{k+3,1}
   &= &x_{k,1}x_{k+3,0} - 3(-1)^k x_{k+1,0}^2,\\[5pt]
\mathrm{(ii)}
&x_{k,0}x_{k+3,2}
   &= &x_{k,2}x_{k+3,0} - 3x_{k+1,0}( 3x_{k+1,0} + 2(-1)^k x_{k+1,1} ),\\[5pt]
\mathrm{(iii)}
&x_{k,1}x_{k+3,2}
   &= &x_{k,2}x_{k+3,1} - 3x_{k+1,0}( 3x_{k+1,1} + (-1)^k x_{k+1,2} ).
\end{array}
$
\end{lemma}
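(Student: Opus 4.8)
The plan is to establish a single matrix identity for $\ux_k J\ux_{k+3}$, in the spirit of Lemma \ref{prelim:lemma:identities}, and then to read off the three scalar relations by comparing matrix entries. Starting from the recurrence \eqref{prelim:thm:eq2} written as $\ux_{k+3}=\ux_{k+2}M_{k+2}\ux_{k+1}$ and using Lemma \ref{prelim:lemma:identities} (iii) to replace $\ux_k J\ux_{k+2}$ by $JM_k\ux_{k+1}$, I would first obtain
\[
 \ux_k J\ux_{k+3}=JM_k\,\ux_{k+1}M_{k+2}\,\ux_{k+1}.
\]
Since $M_{k+2}=P+(-1)^{k+2}J=P+(-1)^kJ$ and since $\ux_{k+1}$ is a symmetric matrix in $\SL_2(\bR)$, so that $\ux_{k+1}J\ux_{k+1}={}^t\ux_{k+1}J\ux_{k+1}=J$, this becomes
\[
 \ux_k J\ux_{k+3}=JM_k\,\ux_{k+1}P\ux_{k+1}+(-1)^k JM_kJ.
\]
This identity is the heart of the argument; it is valid for every $k\ge 1$ since Lemma \ref{prelim:lemma:identities} (iii), whose proof only invokes $\ux_{k+2}=\ux_kM_k\ux_{k+1}$ and ${}^t\ux_kJ\ux_k=J$, holds already for $k\ge1$.

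Next I would make both sides explicit. Writing $\ux_kJ\ux_{k+3}$ in coordinates, its $(1,1)$ entry is $x_{k,0}x_{k+3,1}-x_{k,1}x_{k+3,0}$, its $(2,2)$ entry is $x_{k,1}x_{k+3,2}-x_{k,2}x_{k+3,1}$, and the sum of its $(1,2)$ and $(2,1)$ entries collapses to $x_{k,0}x_{k+3,2}-x_{k,2}x_{k+3,0}$. On the right-hand side, $P$ equals $3$ times the column ${}^t(1,0)$ times the row $(1,0)$, so $\ux_{k+1}P\ux_{k+1}$ is $3$ times the rank-one symmetric matrix with $(i,j)$ entry $x_{k+1,i}x_{k+1,j}$ for $i,j\in\{0,1\}$; one then left-multiplies by $JM_k$ and adds the constant matrix $(-1)^kJM_kJ$, both written down explicitly via $JM_k=JP-(-1)^kI$ (using $J^2=-I$). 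Comparing the $(1,1)$ entries yields formula (i) at once, and comparing the sum of the two off-diagonal entries yields formula (ii), after grouping $-9x_{k+1,0}^2-6(-1)^kx_{k+1,0}x_{k+1,1}$ as $-3x_{k+1,0}\bigl(3x_{k+1,0}+2(-1)^kx_{k+1,1}\bigr)$.

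Formula (iii) needs one further observation: comparing the $(2,2)$ entries produces $x_{k,1}x_{k+3,2}-x_{k,2}x_{k+3,1}=-9x_{k+1,0}x_{k+1,1}-3(-1)^k\bigl(x_{k+1,1}^2+1\bigr)$, and one replaces $x_{k+1,1}^2+1$ by $x_{k+1,0}x_{k+1,2}$ using $\det(\ux_{k+1})=1$, whereupon the right-hand side becomes $-3x_{k+1,0}\bigl(3x_{k+1,1}+(-1)^kx_{k+1,2}\bigr)$. The one place calling for care is the bookkeeping of the many signs $(-1)^k$ (with the evaluations $(-1)^{k+2}=(-1)^k$ and $(-1)^{2k}=1$) and this last use of $\det(\ux_{k+1})=1$ to recognise the $(2,2)$ entry; I do not expect any genuine obstacle beyond that routine computation.
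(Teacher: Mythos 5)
Your proof is correct: the identity $\ux_k J\ux_{k+3}=JM_k\ux_{k+1}P\ux_{k+1}+(-1)^k JM_kJ$ does hold (I checked the entries; they reproduce (i), (ii), (iii) exactly as you describe, including the use of $\det(\ux_{k+1})=1$ for (iii)), and your remark that Lemma \ref{prelim:lemma:identities} (iii) is already valid for $k\ge 1$ is accurate, since its proof only uses \eqref{prelim:thm:eq2} and the symmetry of $\ux_k$. The route differs from the paper's in how the matrix identity is obtained: the paper multiplies the Cayley--Hamilton recurrence $\ux_{k+3}=3x_{k+1,0}\ux_{k+2}-\ux_k$ (Lemma \ref{prelim:lemma:identities} (i), shifted) on the left by $\ux_kJ$ and invokes (iii) together with $\ux_kJ\ux_k=J$, arriving at $\ux_kJ\ux_{k+3}=3x_{k+1,0}JM_k\ux_{k+1}-J$, whereas you substitute the multiplicative recurrence $\ux_{k+3}=\ux_{k+2}M_{k+2}\ux_{k+1}$ and split $M_{k+2}=P+(-1)^kJ$, exactly in the style of the paper's own derivation of Lemma \ref{prelim:lemma:identities} (iv). The trade-off: the paper's right-hand side is linear in the entries of $\ux_{k+1}$, so all three formulas, including (iii), drop out by direct entry comparison with no appeal to the determinant; your right-hand side is quadratic in those entries (via $\ux_{k+1}P\ux_{k+1}$) and needs $x_{k+1,1}^2+1=x_{k+1,0}x_{k+1,2}$ to put (iii) in the stated form, but in exchange it bypasses the recurrence of part (i) altogether and uses only the defining relation \eqref{prelim:thm:eq2}, symmetry, and ${}^t\ux J\ux=J$. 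Both are short, complete arguments.
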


\begin{proof} By Lemma \ref{prelim:lemma:identities} (i), we have $\ux_{k+3}=3x_{k+1,0}\ux_{k+2}-\ux_k$.  Upon multiplying both sides of this equality on the left by $\ux_kJ$ and using Lemma \ref{prelim:lemma:identities} (iii), we obtain
\[
 \ux_k J \ux_{k+3} = 3x_{k+1,0}JM_k\ux_{k+1} - J.
\]
Then (i) and (iii) follow by comparing the diagonal entries of the matrices on both sides of this equality, while (ii) follows by comparing the sum of their off-diagonal entries.
\end{proof}

\begin{lemma}
\label{lemma:Est3}
For each integer $k\ge 1$, we have\\[5pt]
$
\begin{array}{rrcl}
\quad \mathrm{(i)}
&x_{k,0}x_{k+3,2}\xi
   &\equiv &-2\xi + \cO(X_{k+1}^{-2}) \quad \mod \bZ,\\[5pt]
\mathrm{(ii)}
&x_{k,0}x_{k+3,2}\xi^2
   &\equiv &-4\xi^2 + \cO(X_{k+1}^{-2}) \quad \mod \bZ,\\[5pt]
\mathrm{(iii)}
&x_{k,1}x_{k+3,2}\xi
   &\equiv &-3(-1)^k\xi - \xi^2 + \cO(X_{k+1}^{-2}) \quad \mod \bZ.
\end{array}
$
\end{lemma}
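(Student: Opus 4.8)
The plan is to follow the same pattern as in the proofs of Lemmas \ref{lemmaAF} and \ref{prelim:lemmaX}: take the commutation formulas of Lemma \ref{lemma:Comm3}, multiply them by the relevant power of $\xi$, and reduce each resulting product modulo $\bZ$ using the basic approximation \eqref{prelim:eq:basics} together with the bilinear estimates of Lemma \ref{lemma:Bil}. Two elementary facts are used throughout: first, any quantity which can be written as a product $x_{a,i}x_{b,j}$ plus $\cO(X_{k+1}^{-2})$ contributes nothing modulo $\bZ$, since such a product is an integer; second, by \eqref{prelim:eq:basics} one has $x_{k+3,0}\xi=x_{k+3,1}+\cO(X_{k+3}^{-1})$ and $x_{k+3,0}\xi^2=x_{k+3,2}+\cO(X_{k+3}^{-1})$, where, thanks to the identity $\gamma^3=2\gamma+1$ (equivalently $X_{k+3}\asymp X_{k+1}^2X_k^{-1}$, as noted after \eqref{prelim:eq:basics}), one has $X_kX_{k+3}^{-1}\asymp X_{k+1}^{-2}$.

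For (i), I would multiply the identity $x_{k,0}x_{k+3,2}=x_{k,2}x_{k+3,0}-9x_{k+1,0}^2-6(-1)^kx_{k+1,0}x_{k+1,1}$ coming from Lemma \ref{lemma:Comm3} (ii) by $\xi$. The term $x_{k,2}x_{k+3,0}\xi$ equals $x_{k,2}x_{k+3,1}+\cO(X_kX_{k+3}^{-1})$, hence vanishes modulo $\bZ$ up to $\cO(X_{k+1}^{-2})$; the term $9x_{k+1,0}^2\xi$ becomes an integer up to $\cO(X_{k+1}^{-2})$ by Lemma \ref{lemma:Bil} (i); and the term $6(-1)^kx_{k+1,0}x_{k+1,1}\xi$ is reduced by Lemma \ref{lemma:Bil} (ii), whose non-integer part $-6(-1)^k(-1)^{k+1}\xi/3=2\xi$ gives, after the minus sign in front, the stated residue $-2\xi$. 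Statement (iii) follows in the same manner, starting from Lemma \ref{lemma:Comm3} (iii) and using parts (ii) and (iii) of Lemma \ref{lemma:Bil} at index $k+1$: the reduction of $9x_{k+1,0}x_{k+1,1}\xi$ leaves a surviving term $3(-1)^k\xi$ and that of $3(-1)^kx_{k+1,0}x_{k+1,2}\xi$ leaves a surviving term $\xi^2$, which, after the minus signs in front, together yield the residue $-3(-1)^k\xi-\xi^2$.

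Statement (ii) requires a little more bookkeeping. After multiplying Lemma \ref{lemma:Comm3} (ii) by $\xi^2$, I would reduce $x_{k+1,0}^2\xi^2$ by two successive applications of Lemma \ref{lemma:Bil}, namely parts (i) then (ii), and reduce $x_{k+1,0}x_{k+1,1}\xi^2$ by parts (ii) then (iv); since each substitution only worsens the error by a fixed multiple of $\xi$, the total error stays $\cO(X_{k+1}^{-2})$. This produces two parasitic linear terms in $\xi$, one from $-9x_{k+1,0}^2\xi^2$ and one from $-6(-1)^kx_{k+1,0}x_{k+1,1}\xi^2$, which cancel exactly; what remains is the quadratic term $-4\xi^2$, which is the stated residue modulo $\bZ$.

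Since $\xi$ is a fixed real number and the implied constants in \eqref{prelim:eq:basics} do not depend on $k$, all of the errors above are uniformly $\cO(X_{k+1}^{-2})$. The computation is thus essentially mechanical; the one point requiring genuine care --- and the only thing that can easily go wrong --- is the bookkeeping of the signs $(-1)^k$ versus $(-1)^{k+1}$, which must be handled precisely so that the unwanted pure-$\xi$ contributions in (ii) and (iii) collapse to the stated residues.
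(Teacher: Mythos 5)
Your proposal is correct and takes essentially the same route as the paper: multiply the identities of Lemma \ref{lemma:Comm3} (ii)--(iii) by the appropriate power of $\xi$ and reduce modulo $\bZ$ using the basic estimates together with Lemma \ref{lemma:Bil} at index $k+1$ (for part (ii) the paper just multiplies the reduced form of (i) by $\xi$ and applies Lemma \ref{lemma:Bil} (ii) and (iv), which amounts to the same computation and cancellation of the linear terms in $\xi$ that you describe). The only slip is the parenthetical claim $X_{k+3}\asymp X_{k+1}^2X_k^{-1}$, which should read $X_{k+3}\asymp X_{k+1}^2X_k$; the bound you actually use, $X_kX_{k+3}^{-1}\asymp X_{k+1}^{-2}$, is correct and does follow from $\gamma^3=2\gamma+1$.
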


\begin{proof}
Applying first Lemma \ref{lemma:Comm3} (ii) and then Lemma \ref{lemma:Bil} (i) and (ii), we find
\begin{align*}
 x_{k,0}x_{k+3,2}\xi
   &= x_{k,2}x_{k+3,0}\xi - 3x_{k+1,0}( 3x_{k+1,0} + 2(-1)^k x_{k+1,1} )\xi,\\
   &= x_{k,2}x_{k+3,1} - 3x_{k+1,1}( 3x_{k+1,0} + 2(-1)^k x_{k+1,1} ) -3(-1)^k -2\xi + \cO(X_{k+1}^{-2}),
\end{align*}
which proves (i).  Then, multiplying this equality by $\xi$ and applying Lemma \ref{lemma:Bil} (ii) and (iv), a short computation gives (ii).  Similarly (iii) follows from Lemma \ref{lemma:Comm3} (iii) together with Lemma \ref{lemma:Bil} (ii) and (iii).
\end{proof}

The first step in the proof of Theorem \ref{intro:thmdeg6} is to show that there are arbitrarily large values of $k$ for which $\{x_{k,0}\xi^6\} \le c/k$ for some constant $c=c(\xi)>0$.  This is achieved by proving first that the sequence of differences $\{(x_{k+6,0}-x_{k,0})\xi^6\}$ admits at most six accumulation points and then that these are irrational numbers.  The reader will note that this already implies that the numbers $\{x_{k,0}\xi^6\}$ are dense in the interval $[0,1/2]$.  We start by working out an estimate for $\{(x_{k+6,0}-x_{k,0})\xi^6\}$ modulo $\bZ$.

\begin{proposition}
 \label{prop:sigma}
For each $k\ge 1$, the quantity $\sigma_k := (x_{k+6,0}-x_{k,0})\xi^6$ satisfies
\[
 \sigma_k
 \equiv -18(-1)^k \big( 2D_{k+1}\xi +12x_{k,0}\xi^3+(-1)^kx_{k+3,0}\xi^4 \big)
   + \cO(X_k^{-1})
 \mod \bZ.
\]
\end{proposition}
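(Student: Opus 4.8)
The plan is to expand $(x_{k+6,0}-x_{k,0})\xi^6$ by peeling off powers of $\xi$ one at a time, exactly as in the proof of Lemma~\ref{lemmaP}, but now pushing two steps further (from $\xi^5$ to $\xi^6$) and keeping track of the result modulo $\bZ$ rather than just claiming it is $\cO(X_k^{-1})$. The natural starting point is to split
\[
 (x_{k+6,0}-x_{k,0})\xi^6 = (x_{k+6,0}+x_{k+3,0})\xi^6 - (x_{k+3,0}+x_{k,0})\xi^6,
\]
so that everything reduces to understanding $(x_{k+3,0}+x_{k,0})\xi^6 \bmod \bZ$. I already have $(x_{k+3,0}+x_{k,0})\xi^4 = 3(D_{k+1}+2(-1)^kx_{k,2}\xi)+\cO(X_k^{-1})$ from \eqref{LemmaP:eq4}, so the task is to multiply this through by $\xi^2$ and simplify. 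Multiplying by $\xi$ once turns $x_{k,2}\xi$ into (via \eqref{prelim:eq:basics}) $x_{k,0}\xi^3+\cO(X_k^{-1})$ and turns $D_{k+1}\xi$ into something handled by Lemma~\ref{lemmaAF}(d) (with $k$ replaced by $k+1$); multiplying by $\xi$ a second time then brings in $x_{k,0}\xi^4$ and the remaining $D$-terms need to be reduced again. Throughout, any product of the form $x_{j,1}x_{j',2}\xi$ or $x_{j,2}^2\xi$ that appears should be converted using Lemma~\ref{lemmaAF} and, where a single matrix is involved, Lemma~\ref{lemma:Bil}.

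\textbf{Key steps, in order.} First, I would write $(x_{k+3,0}+x_{k,0})\xi^5 = 3(D_{k+1}\xi + 2(-1)^k x_{k,2}\xi^2)+\cO(X_k^{-1})$ and reduce $D_{k+1}\xi$ using Lemma~\ref{lemmaAF}(d) (shifting $k\mapsto k+1$): this gives $D_{k+1}\xi = x_{k+1,2}x_{k+2,2}\xi - 3x_{k,2}\xi = \cdots$, and $x_{k+1,2}x_{k+2,2}\xi$ is handled by Lemma~\ref{lemmaAF}(a) applied at index $k+1$, namely $x_{k+1,2}x_{k+2,2}\xi \equiv A_{k+1}\xi + (-1)^{k+1}x_{k+2,2}\xi\cdot(\text{something})$ — more precisely I would go through $A_{k+1}$ and then \eqref{eqAkxi}. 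Second, multiply once more by $\xi$ to reach the $\xi^6$ level; here $x_{k,2}\xi^2\equiv x_{k,0}\xi^4+\cO(X_k^{-1})$, which by Lemma~\ref{prelim:lemmaX}(i) equals $B_k-2(-1)^kx_{k+1,2}\xi+\cO(X_{k+1}^{-1})$, and the surviving $B$- and $D$-terms get absorbed into integers. Third, assemble $(x_{k+3,0}+x_{k,0})\xi^6$ as an explicit combination of $D_{k+1}\xi$, $x_{k,0}\xi^3$, $x_{k+3,0}\xi^4$ and integer terms modulo $\bZ$. Fourth, subtract the $(k\mapsto k+3)$ version: the pure-integer parts drop out modulo $\bZ$, and telescoping $x_{k+3,0}\xi^3$-type terms against each other (using Lemma~\ref{lemmaP}(i), i.e. $x_{k+3,0}\equiv -x_{k,0}$ up to $\cO(X_k^{-1})$ after multiplying by $\xi^j$, $j\le 3$) collapses many contributions. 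What remains should be exactly the stated expression $-18(-1)^k(2D_{k+1}\xi+12x_{k,0}\xi^3+(-1)^kx_{k+3,0}\xi^4)+\cO(X_k^{-1})$; I expect the factor $18$ and the coefficients $2$, $12$, $1$ to emerge from the bookkeeping of the $3$'s coming from \eqref{LemmaP:eq4}, the $2$'s from $\SL_2$-commutation, and the sign $(-1)^k$ from $M_k$.

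\textbf{Main obstacle.} The real difficulty is purely in the bookkeeping: every multiplication by $\xi$ introduces a new layer where one must decide, term by term, whether a product is handled by Lemma~\ref{lemmaAF}, by Lemma~\ref{lemma:Comm3}, or by Lemma~\ref{prelim:lemmaX}, and the error terms live at several different scales ($X_k^{-1}$, $X_{k+1}^{-1}$, $X_{k+2}^{-1}$) that must all be checked to be $\cO(X_k^{-1})$ after multiplication by the relevant $x_{j,0}$ factors. In particular the cross terms between $D_{k+4}$ and $D_{k+1}$ (as in \eqref{LemmaP:eq:Dxi}) have to be expanded once more using the recurrence \eqref{prelim:eqrecj} and Lemma~\ref{prelim:lemmaX}(iii), which is where the coefficient $12$ in front of $x_{k,0}\xi^3$ is most likely to go wrong if one is not careful. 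There is no conceptual obstruction — it is the same mechanism as Lemma~\ref{lemmaP} run two steps longer and tracked modulo $\bZ$ instead of to zero — but the computation is long enough that I would organize it as a sequence of displayed congruences, each justified by a single cited formula, rather than attempt it in one pass.
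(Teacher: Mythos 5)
Your overall plan is the same mechanism as the paper's proof (push the computation of Lemma \ref{lemmaP} one power of $\xi$ higher and track everything modulo $\bZ$), and the splitting $\sigma_k=(x_{k+6,0}+x_{k+3,0})\xi^6-(x_{k+3,0}+x_{k,0})\xi^6$ together with the exact estimate \eqref{LemmaP:eq4} is a legitimate starting point. The genuine gap is in how you propose to absorb the extra power of $\xi$. After multiplying \eqref{LemmaP:eq4} by $\xi^2$ you are left with $D_{k+1}\xi^2$ (equivalently $x_{k+1,2}x_{k+2,2}\xi^2$) and, since $x_{k,2}\xi^3=x_{k,0}\xi^5+\cO(X_k^{-1})$, with $x_{k,0}\xi^5$; none of the results you cite reduce these at the required precision. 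Lemma \ref{lemmaAF}(a) concerns the product $x_{k,0}x_{k+2,2}\xi$, not $x_{k+1,2}x_{k+2,2}\xi$ (in the correct computation the latter is never reduced: it is rewritten as $D_{k+1}\xi+3x_{k,2}\xi$, and $D_{k+1}\xi$ survives into the final formula). More seriously, Lemma \ref{prelim:lemmaX}(iii), \eqref{LemmaP:eq:Dxi} and the reduction \eqref{eqAkxi} are statements modulo $\bZ$ only; you cannot multiply them by a further factor of $\xi$, because the discarded integer, once multiplied by $\xi$, contributes terms of the same size as the main terms of the statement. Also note that you cannot trade $x_{k+1,2}$ for $x_{k+1,0}\xi^2$ inside a product with $x_{k+2,2}$: the resulting error is $\cO(X_{k+2}X_{k+1}^{-1})$, which is unbounded.

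What is missing is the device that makes a single remaining multiplication by $\xi$ legitimate: one must first rewrite the whole $j=5$ computation of Lemma \ref{lemmaP} as an exact identity with all integer parts named explicitly, of the form $\tfrac19(x_{k+6,0}-x_{k,0})\xi^5= x_{k+2,1}B_{k+3}-x_{k+1,2}C_{k+3}+(-1)^kB_{k+2}-9C_{k+1}-2(-1)^kD_{k+1}-(-1)^kF_{k+1}+\cO(X_k^{-1})$, and only then multiply once by $\xi$, reducing each product via the commutation formulas and Lemma \ref{lemmaAF}, and — at the one unavoidable place, the product $x_{k+2,2}x_{k+4,2}\xi$ expanded through the recurrence \eqref{prelim:eqrecj} — via Lemma \ref{lemma:Bil}(iii), whose $\cO(X_{k+2}^{-2})$ error is exactly sharp enough to stay $\cO(X_k^{-1})$ after multiplication by $x_{k+3,2}$. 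Without an explicit integer-level identity of this kind (or an equivalent re-derivation of Lemma \ref{prelim:lemmaX}(iii) with its integer part recorded), your third and fourth steps cannot be carried out as described, and the coefficients $18$, $2$, $12$, $1$ cannot be pinned down; the rest of your outline (telescoping with Lemma \ref{lemmaP}(i), parity of signs from $M_k$) is fine once this is fixed.
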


\begin{proof}
Going back to the computations of Lemma \ref{lemmaP} (ii) for $j=5$ and replacing at each step the congruence modulo $\bZ$ by an explicit estimate, we find
\begin{equation}
 \label{prop:sigma:eq0}
 \begin{aligned}
 \frac{1}{9}(x_{k+6,0}-x_{k,0})\xi^5
 =\, &x_{k+2,1}B_{k+3} - x_{k+1,2}C_{k+3} + (-1)^kB_{k+2}\\
  &- 9C_{k+1} - 2(-1)^kD_{k+1} - (-1)^kF_{k+1} +\cO(X_k^{-1}).
 \end{aligned}
\end{equation}
Now, we multiply both sides of this equality by $\xi$ and estimate separately each product in the right hand side.  We first find
\begin{equation}
 \label{prop:sigma:eq1}
 \begin{aligned}
 x_{k+2,1}B_{k+3}\xi
  &= x_{k+2,1}x_{k+3,2}x_{k+5,2}\xi - 3x_{k+2,1}x_{k+4,2}\xi\\
  &= \big( x_{k+2,2}x_{k+3,1} - 3x_{k+1,1} -(-1)^kx_{k+1,2} \big) x_{k+5,2}\xi - 3x_{k+2,1}x_{k+4,2}\xi\\
  &\equiv (-1)^k x_{k+2,2}x_{k+4,2}\xi - (-1)^k x_{k+1,2}x_{k+5,2}\xi + \cO(X_{k+3}^{-1}) \mod \bZ,
 \end{aligned}
\end{equation}
where the second equality follows from the commutation formula \eqref{prelim:eq:1,1,2} applied to the product $x_{k+2,1}x_{k+3,2}$ while the congruence modulo $\bZ$ derives from Lemma \ref{lemmaAF} (b) and (f).  Thanks to \eqref{LemmaP:eqCk}, we also have
\begin{equation}
 \label{prop:sigma:eq2}
 C_{k+1}\xi \equiv 2(-1)^k x_{k,2}\xi + \cO(X_k^{-1}) \mod \bZ.
\end{equation}
From this, we deduce that
\begin{equation}
 \label{prop:sigma:eq3}
 x_{k+1,2}C_{k+3}\xi \equiv 2(-1)^k x_{k+1,2}x_{k+2,2}\xi + \cO(X_k^{-1}) \mod \bZ.
\end{equation}
The formula for $F_k$ is not stated in Lemma \ref{lemmaAF} but is easily derived from \eqref{prelim:eq:4,1,2}.  Expanding it for $F_{k+1}$ and then using Lemma \ref{lemmaAF} (a) and (b), we obtain
\begin{equation}
 \label{prop:sigma:eq4}
 \begin{aligned}
 F_{k+1}\xi
  &= x_{k+1,2}x_{k+5,2}\xi - 3(3x_{k+2,0}-(-1)^kx_{k+2,1})x_{k+4,2}\xi \\
  &\equiv x_{k+1,2}x_{k+5,2}\xi - 3 x_{k+3,2}\xi + \cO(X_{k+3}^{-1}) \mod \bZ.
 \end{aligned}
\end{equation}
Combining (\ref{prop:sigma:eq0}--\ref{prop:sigma:eq4}) and using the expressions for $B_{k+2}$ and $D_{k+1}$ coming from their definitions in Lemma \ref{lemmaAF}, we obtain after simplifications
\[
 \frac{(-1)^k\sigma_k}{9}
 \equiv
 2x_{k+2,2}x_{k+4,2}\xi - 2x_{k+1,2}x_{k+5,2}\xi - 4x_{k+1,2}x_{k+2,2}\xi - 12x_{k,2}\xi + \cO(X_k^{-1}) \mod \bZ.
\]
Since by \eqref{LemmaP:eq:xi^3}, we have $x_{k+5,2}\xi \equiv - x_{k+2,2}\xi + \cO(X_{k+2}^{-1}) \mod \bZ$, the above congruence simplifies to
\begin{equation}
 \label{prop:sigma:eq5}
 \begin{aligned}
 (-1)^k\sigma_k/9
 &\equiv
 2x_{k+2,2}x_{k+4,2}\xi - 2x_{k+1,2}x_{k+2,2}\xi - 12x_{k,2}\xi + \cO(X_k^{-1}) \mod \bZ\\
 &\equiv
 2x_{k+2,2}x_{k+4,2}\xi - 2D_{k+1}\xi - 18x_{k,2}\xi + \cO(X_k^{-1}) \mod \bZ.
 \end{aligned}
\end{equation}
On the other hand, upon expanding $x_{k+4,2}$ according to the recurrence formula \eqref{prelim:eqrecj} with $j=2$ and then using Lemma \ref{lemma:Bil} (iii) to estimate $x_{k+2,0}x_{k+2,2}\xi$, we find
\[
 \begin{aligned}
 x_{k+2,2}x_{k+4,2}\xi
 &= x_{k+2,2} ( 3x_{k+2,0}x_{k+3,2} - x_{k+1,2} )\xi\\
 &\equiv -(-1)^k x_{k+3,2}\xi^2 - x_{k+1,2}x_{k+2,2}\xi + \cO(X_k^{-1}) \mod \bZ\\
 &\equiv -D_{k+1}\xi - 3x_{k,2}\xi -(-1)^kx_{k+3,2}\xi^2 + \cO(X_k^{-1}) \mod \bZ.
 \end{aligned}
\]
The conclusion follows by substituting this estimate into \eqref{prop:sigma:eq5} and using the basic estimates \eqref{prelim:eq:basics}.
\end{proof}

\begin{corollary}
 \label{cor:sigma:limit}
Let $\sigma_k$ be defined as in Proposition \ref{prop:sigma}.  Then, for each integer $k\ge 1$, the limit $\delta_k := \lim_{i\to\infty}\{\sigma_{k+6i}\}$ exists and is a $6$-periodic function of $k$ satisfying
\[
 \sigma_k \equiv \delta_k +\cO(X_k^{-1}) \mod \bZ.
\]
\end{corollary}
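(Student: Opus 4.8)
The plan is to follow the pattern of the proof of Proposition \ref{propdelta}: establish that the increment $\sigma_{k+6}-\sigma_k$ lies within $\cO(X_k^{-1})$ of an integer, and then deduce the existence of $\delta_k$, its $6$-periodicity, and the final estimate by a routine telescoping argument. So the real work is to prove that $\{\sigma_{k+6}-\sigma_k\}\ll X_k^{-1}$ for each $k\ge 1$.

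To this end I would start from the explicit congruence of Proposition \ref{prop:sigma}. Since $(-1)^{k+6}=(-1)^k$, subtracting the formula for $\sigma_k$ from the one for $\sigma_{k+6}$ gives, modulo $\bZ$,
\[
 \sigma_{k+6}-\sigma_k
 \equiv -18(-1)^k\Big( 2(D_{k+7}-D_{k+1})\xi + 12(x_{k+6,0}-x_{k,0})\xi^3 + (-1)^k(x_{k+9,0}-x_{k+3,0})\xi^4 \Big)+\cO(X_k^{-1}).
\]
By Lemma \ref{lemmaP} (ii) we have $\{(x_{k+6,0}-x_{k,0})\xi^3\}\ll X_k^{-1}$ and, after replacing $k$ by $k+3$, also $\{(x_{k+9,0}-x_{k+3,0})\xi^4\}\ll X_{k+3}^{-1}\ll X_k^{-1}$, so the second and third terms contribute only $\cO(X_k^{-1})$ modulo $\bZ$. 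For the term involving the $D_k$'s, I would apply the identity \eqref{LemmaP:eq:Dxi} established inside the proof of Lemma \ref{lemmaP}, namely $(D_{k+4}-D_{k+1})\xi\equiv -6x_{k+3,2}\xi+6x_{k,2}\xi+\cO(X_k^{-1})\bmod\bZ$, once at index $k$ and once at index $k+3$, and add the two congruences: the terms $\mp 6x_{k+3,2}\xi$ cancel, leaving $(D_{k+7}-D_{k+1})\xi\equiv -6x_{k+6,2}\xi+6x_{k,2}\xi+\cO(X_k^{-1})\bmod\bZ$. Using $x_{k+6,2}\xi=x_{k+6,0}\xi^3+\cO(X_{k+6}^{-1})$ and $x_{k,2}\xi=x_{k,0}\xi^3+\cO(X_k^{-1})$ from \eqref{prelim:eq:basics}, this equals $-6(x_{k+6,0}-x_{k,0})\xi^3+\cO(X_k^{-1})$, which is again $\cO(X_k^{-1})$ modulo $\bZ$ by Lemma \ref{lemmaP} (ii). Combining the three contributions yields $\{\sigma_{k+6}-\sigma_k\}\ll X_k^{-1}$.

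To finish, I would argue exactly as in Proposition \ref{propdelta}. From $\{\sigma_{k+6}-\sigma_k\}\ll X_k^{-1}$ and the inequality $|\{\eta\}-\{\eta'\}|\le\{\eta-\eta'\}$ one gets $|\{\sigma_{k+6i}\}-\{\sigma_k\}|\ll\sum_{j=0}^{i-1}X_{k+6j}^{-1}\ll X_k^{-1}$, the last step because $(X_k)_{k\ge1}$ grows at least geometrically; hence $(\{\sigma_{k+6i}\})_{i\ge 1}$ is a Cauchy sequence, its limit $\delta_k$ exists, and letting $i\to\infty$ in the previous bound gives $|\{\sigma_k\}-\delta_k|\ll X_k^{-1}$, i.e. $\sigma_k\equiv\delta_k+\cO(X_k^{-1})\bmod\bZ$. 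Periodicity is immediate: $\delta_{k+6}=\lim_i\{\sigma_{k+6+6i}\}=\lim_i\{\sigma_{k+6(i+1)}\}=\delta_k$. The only step that is not bookkeeping is the cancellation among the $D_k$-terms in the middle paragraph, and this is handled cleanly by iterating \eqref{LemmaP:eq:Dxi}; everything else is a direct transcription of the argument already used for Proposition \ref{propdelta}.
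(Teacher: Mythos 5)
Your proposal is correct and follows essentially the same route as the paper: the paper also reduces the corollary to showing $\sigma_{k+6}-\sigma_k\equiv\cO(X_k^{-1})\bmod\bZ$, handles the $\xi^3$ and $\xi^4$ terms by Lemma \ref{lemmaP} (ii) for $j=3,4$, and treats the $D$-term by writing $(D_{k+7}-D_{k+1})\xi=(D_{k+7}-D_{k+4})\xi+(D_{k+4}-D_{k+1})\xi$ and applying \eqref{LemmaP:eq:Dxi} twice, exactly as you do. The only difference is that the paper leaves the final telescoping/Cauchy argument implicit (as in Proposition \ref{propdelta}), which you spell out.
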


\begin{proof}
It suffices to show that $\sigma_{k+6} - \sigma_k \equiv \cO(X_k^{-1}) \mod \bZ$. In view of the congruence for $\sigma_k$ given by Proposition \ref{prop:sigma}, this follows from the estimates of Lemma \ref{lemmaP} (ii) for $j=3,4$ together with the fact that \eqref{LemmaP:eq:Dxi} leads to
\[
 \begin{aligned}
 (D_{k+7} - D_{k+1})\xi
 &= (D_{k+7} - D_{k+4})\xi + (D_{k+4} - D_{k+1})\xi \\
 &\equiv -6x_{k+6,2}\xi + 6 x_{k,2}\xi + \cO(X_k^{-1}) \mod \bZ\\
 &\equiv -6(x_{k+6,0} - x_{k,0})\xi^3 + \cO(X_k^{-1}) \mod \bZ\\
 &\equiv \cO(X_k^{-1}) \mod \bZ.
 \end{aligned}
\]
\end{proof}

In order to show that the limit points $\delta_k$ are irrational, we use the following estimates.

\begin{proposition}
 \label{prop:deg6:den_sigma}
For each integer $k\ge 4$, the real number $\sigma_k$ defined in Proposition \ref{prop:sigma} satisfies
\begin{itemize}
\item[(i)] $x_{k-2,0}\sigma_k \equiv -36(-1)^kx_{k-1,2}\xi + \cO(X_{k-1}^{-1}) \mod \bZ$,
\smallskip
\item[(ii)] $x_{k-3,0}\sigma_k \equiv \cO(X_{k-2}^{-2}) \mod \bZ$.
\end{itemize}
\end{proposition}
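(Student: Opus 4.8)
The plan is to multiply the congruence for $\sigma_k$ furnished by Proposition~\ref{prop:sigma} by $x_{k-2,0}$ (for part~(i)) and by $x_{k-3,0}$ (for part~(ii)), and then to reduce the three resulting products modulo $\bZ$. The error term $\cO(X_k^{-1})$ of Proposition~\ref{prop:sigma} already scales correctly: since $X_{k-2}X_k^{-1}\asymp X_{k-2}^{1-\gamma^2}=X_{k-2}^{-\gamma}\asymp X_{k-1}^{-1}$ and $X_{k-3}X_k^{-1}\asymp X_{k-3}^{1-\gamma^3}=X_{k-3}^{-2\gamma}\asymp X_{k-2}^{-2}$, multiplying it by $x_{k-2,0}$ yields $\cO(X_{k-1}^{-1})$ and by $x_{k-3,0}$ yields $\cO(X_{k-2}^{-2})$, matching the errors claimed in (i) and (ii). It therefore remains to control $x_{k-j,0}D_{k+1}\xi$, $x_{k-j,0}x_{k,0}\xi^3$ and $x_{k-j,0}x_{k+3,0}\xi^4$ modulo $\bZ$, with the appropriate precision, for $j=2$ and $j=3$.

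For (i): the product $x_{k-2,0}x_{k,0}\xi^3$ is immediate, since writing $x_{k,0}\xi^3=x_{k,2}\xi+\cO(X_k^{-1})$ and applying Lemma~\ref{lemmaAF}(a) with $k$ replaced by $k-2$ gives $A_{k-2}+\cO(X_{k-1}^{-1})$, hence $0$ modulo $\bZ$ up to $\cO(X_{k-1}^{-1})$. For $x_{k-2,0}x_{k+3,0}\xi^4$ one first replaces $x_{k+3,0}\xi^2$ by $x_{k+3,2}$, then steps the large index $k+3$ down using the recurrence~\eqref{prelim:eqrecj} and the identity $3x_{i,0}x_{i+1,0}=x_{i+2,0}+x_{i-1,0}$ (a case of~\eqref{prelim:eqrecj}), picking off the non-integral remnants with Lemma~\ref{prelim:lemmaX}(i) and Lemma~\ref{lemma:Est3}(ii); these remnants turn out to be proportional to $x_{k-1,2}\xi$ and to $\xi^2$. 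Expanding $D_{k+1}=x_{k+1,2}x_{k+2,2}-3x_{k,2}$ and reducing $x_{k-2,0}x_{k+1,2}x_{k+2,2}\xi$ with the same tools leaves a remnant proportional to $x_{k-1,2}\xi$. Assembling the three pieces, the $\xi^2$-contributions cancel; the overall factor $-18(-1)^k$ then clears all remaining denominators and what survives is exactly $-36(-1)^kx_{k-1,2}\xi+\cO(X_{k-1}^{-1})$ modulo $\bZ$, which is (i).

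For (ii) the scheme is identical, but with target precision $\cO(X_{k-2}^{-2})$ in place of $\cO(X_{k-1}^{-1})$; accordingly every crude step $x_{i,j}=x_{i,0}\xi^j+\cO(X_i^{-1})$, when it stands next to a factor of large index, must be replaced by the sharper estimates of Lemmas~\ref{lemma:Bil} and~\ref{lemma:Est3}, whose errors are $\cO(X_\bullet^{-2})$. Here $x_{k-3,0}x_{k,0}\xi^3\equiv-2\xi+\cO(X_{k-2}^{-2})\bmod\bZ$ by Lemma~\ref{lemma:Est3}(i), while $x_{k-3,0}x_{k+3,0}\xi^4$ and $x_{k-3,0}D_{k+1}\xi$ are reduced as before; the point is now that \emph{all} the non-integral remnants, including this $-2\xi$, cancel when the three pieces are combined, leaving only $\cO(X_{k-2}^{-2})$.

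The main obstacle is the bookkeeping, compounded by a loss of precision under multiplication by $\xi$: once an estimate has the shape $(\text{integer})+(\text{bounded term})+(\text{small error})$, one can no longer multiply by $\xi$ and keep control modulo $\bZ$, because the integer part may be as large as $X_{k-3}X_k$. This forces a rigid order of reduction — one must eliminate the factor of \emph{largest} index first, through the index-$3$ commutation formulas of Lemmas~\ref{lemma:Comm3} and~\ref{lemma:Est3} (splitting any index gap of $4$ or $5$ via~\eqref{prelim:eqrecj} first), never through the cheap approximation $x_{i,j}\approx x_{i,0}\xi^j$ when the remaining factors are large. One must also check at every step that the quantities treated as integers genuinely are (using $\det(\ux_i)=1$ and the integrality of $A_i,B_i,C_i,D_i$), and — since (ii) rests on an exact cancellation of bounded terms — track the numerical constants (the $2$, the $6$, the $36$, \dots) without error.
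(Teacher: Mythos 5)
Your overall plan coincides with the paper's: multiply the congruence of Proposition \ref{prop:sigma} by $x_{k-2,0}$, resp.\ $x_{k-3,0}$, note that the error $\cO(X_k^{-1})$ scales to $\cO(X_{k-1}^{-1})$, resp.\ $\cO(X_{k-2}^{-2})$ (your exponent computation is correct), and reduce the products $x_{k-j,0}D_{k+1}\xi$, $x_{k-j,0}x_{k,0}\xi^3$, $x_{k-j,0}x_{k+3,0}\xi^4$ modulo $\bZ$ with the commutation formulas and Lemmas \ref{lemmaAF}, \ref{lemma:Bil}, \ref{lemma:Est3}; your remarks on the order of reduction are also sound. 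The genuine gap is that the reduction itself — which is the entire content of the proposition — is never carried out: for the two hard products you only list the available tools and assert that the remnants ``turn out'' to be proportional to $x_{k-1,2}\xi$ and $\xi^2$, that ``the $\xi^2$-contributions cancel'', and that ``what survives is exactly $-36(-1)^kx_{k-1,2}\xi$''. Since the statement to be proved is precisely the value of these surviving non-integral parts (and, in (ii), an exact cancellation), this amounts to asserting the conclusion rather than deriving it.

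Moreover, the remnant structure you describe for (i) does not cohere with what the computation actually gives. Using \eqref{LemmaP:eq4} one has $x_{k+3,0}\xi^4 \equiv 6(-1)^kx_{k,2}\xi - x_{k,2}\xi^2 + \cO(X_k^{-1}) \bmod \bZ$, so that $\sigma_k \equiv -18(-1)^k\bigl(2x_{k+1,2}x_{k+2,2}\xi + 12x_{k,2}\xi - (-1)^kx_{k,2}\xi^2\bigr) + \cO(X_k^{-1})$; after multiplying by $x_{k-2,0}$, the terms coming from $2D_{k+1}\xi$ and $12x_{k,0}\xi^3$ are integral up to $\cO(X_{k-1}^{-1})$ — the crux being that $x_{k-2,0}x_{k+1,2}x_{k+2,2}\xi \equiv \cO(X_{k-1}^{-1}) \bmod \bZ$, obtained by expanding $x_{k-2,0}x_{k+2,2}$ with \eqref{prelim:eq:4,0,2} and using Lemma \ref{lemmaAF}(d) and Lemma \ref{lemma:Bil}(v) — while the entire output $-36(-1)^kx_{k-1,2}\xi$ comes from the single term $(-1)^kx_{k+3,0}\xi^4$ via $x_{k-2,0}x_{k,2}\xi^2 \equiv A_{k-2}\xi \equiv -2(-1)^kx_{k-1,2}\xi \bmod \bZ$. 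No $\xi^2$ remnant occurs in (i) at all, so in your accounting (a $\xi^2$ remnant from the $\xi^4$ piece, only an $x_{k-1,2}\xi$ remnant from the $D_{k+1}$ piece, nothing from the $\xi^3$ piece) the claimed $\xi^2$-cancellation has no partner. Likewise (ii) hinges on the specific identity $x_{k-3,0}x_{k+1,2}x_{k+2,2}\xi \equiv 12\xi - 2(-1)^k\xi^2 + \cO(X_{k-2}^{-2}) \bmod \bZ$ (via \eqref{prelim:eq:4,0,2}, Lemma \ref{lemmaAF}(b),(d) and Lemma \ref{lemma:Est3}(i),(iii)), which exactly offsets the $-12\xi + 2(-1)^k\xi^2$ produced by the other two terms. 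Until these reductions are performed with their constants, what you have is a plausible plan, not a proof.
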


\begin{proof}  Thanks to \eqref{LemmaP:eq4}, we first note that
\[
 \begin{aligned}
 x_{k+3,0}\xi^4
  &= 3D_{k+1} + 6(-1)^kx_{k,2}\xi - x_{k,0}\xi^4 + \cO(X_k^{-1})\\
  &\equiv 6(-1)^kx_{k,2}\xi - x_{k,2}\xi^2 + \cO(X_k^{-1}) \mod \bZ.
  \end{aligned}
\]
Substituting this into the congruence for $\sigma_k$ given by Proposition \ref{prop:sigma}, and replacing $D_{k+1}$ by its defining formula from Lemma \ref{lemmaAF} (d), we obtain
\begin{equation}
 \label{lemma:den_sigma:eq1}
 \sigma_k
 \equiv
 -18(-1)^k \big( 2x_{k+1,2}x_{k+2,2}\xi + 12x_{k,2}\xi - (-1)^kx_{k,2}\xi^2 \big) + \cO(X_k^{-1}).
\end{equation}

To study $x_{k-2,0}\sigma_k$, we multiply both sides of this congruence by $x_{k-2,0}$.  Thanks to Lemma \ref{lemmaAF} (a) and (b), this gives
\begin{equation}
 \label{lemma:den_sigma:eq2}
 \begin{aligned}
 x_{k-2,0}\sigma_k
 &\equiv -18(-1)^k \big( 2x_{k-2,0}x_{k+1,2}x_{k+2,2}\xi
         - (-1)^kA_{k-2}\xi \big) + \cO(X_{k-1}^{-1}) \mod \bZ.\\
 &\equiv -36(-1)^k \big( x_{k-2,0}x_{k+1,2}x_{k+2,2}\xi
         + x_{k-1,2}\xi \big) + \cO(X_{k-1}^{-1}) \mod \bZ.
 \end{aligned}
\end{equation}
Now, we expand $x_{k-2,0}x_{k+2,2}$ according to the commutation formula \eqref{prelim:eq:4,0,2}, and then apply Lemma \ref{lemmaAF} (d) and Lemma \ref{lemma:Bil} (v) in this order to get
\[
 \begin{aligned}
 x_{k-2,0}x_{k+1,2}x_{k+2,2}\xi
 &= x_{k+1,2} \big( x_{k-2,1}x_{k+2,1}\xi - 3(-1)^kx_{k-1,0}x_{k+1,1}\xi - (-1)^kx_{k,1}\xi \big)\\
 &\equiv - 3(-1)^kx_{k-1,0}x_{k+1,1}x_{k+1,2}\xi + x_{k-1,2}\xi + \cO(X_{k-1}^{-1}) \mod \bZ\\
 &\equiv - 3(-1)^kx_{k-1,0}(-\xi^2+(-1)^k\xi^3/3) + x_{k-1,2}\xi + \cO(X_{k-1}^{-1}) \mod \bZ\\
 &\equiv \cO(X_{k-1}^{-1}) \mod \bZ.
 \end{aligned}
\]
The congruence (i) follows by substituting this expression into \eqref{lemma:den_sigma:eq2}.

The proof of (ii) is similar.  We first multiply both sides of \eqref{lemma:den_sigma:eq1} by $x_{k-3,0}$.  Using Lemma \ref{lemma:Est3} (i) and (ii), this gives
\begin{equation}
 \label{lemma:den_sigma:eq3}
 x_{k-3,0}\sigma_k
 \equiv
 -36(-1)^k \big( x_{k-3,0}x_{k+1,2}x_{k+2,2}\xi - 12\xi + 2(-1)^k\xi^2 \big) + \cO(X_{k-2}^{-2}).
\end{equation}
Expanding $x_{k-3,0}x_{k+1,2}$ according to the commutation formula \eqref{prelim:eq:4,0,2} and then using Lemma \ref{lemmaAF} (b) and (d), we also find
\[
 \begin{aligned}
 x_{k-3,0}x_{k+1,2}x_{k+2,2}\xi
  \equiv (-1)^k x_{k-3,1}x_{k,2}\xi &- 3x_{k-2,0}x_{k+1,2}\xi\\
   &+ (-1)^k x_{k-1,1}x_{k+2,2}\xi + \cO(X_{k-2}^{-2}) \mod \bZ.
 \end{aligned}
\]
The congruence (ii) then follows by substituting this expression into \eqref{lemma:den_sigma:eq3} and then by simplifying the result using Lemma \ref{lemma:Est3} (i) and (iii).
\end{proof}

\begin{theorem}
 \label{thm:deg6:trans}
For each $\ell=1,\dots,6$, the number $\delta_\ell$ defined in Corollary \ref{cor:sigma:limit} is transcendental over $\bQ$ and satisfies $|\delta_\ell-\alpha| \ll H(\alpha)^{-\gamma^3}$ for infinitely many $\alpha\in\bQ$.
\end{theorem}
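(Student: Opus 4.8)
The plan is to follow the strategy of Corollary \ref{prop:delta_R(xi):cor1}: first exhibit rational numbers approximating $\delta_\ell$ to an order strictly larger than $2$, then prove that $\delta_\ell$ is irrational by the ``too good rational approximations'' principle, and finally conclude transcendence via Roth's theorem. Fix $\ell\in\{1,\dots,6\}$ and work with integers $k\equiv\ell\pmod 6$, so that Corollary \ref{cor:sigma:limit} gives $\sigma_k\equiv\delta_\ell+\cO(X_k^{-1})\mod\bZ$.

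For the approximations, I would multiply this last congruence by the integer $x_{k-3,0}$ and combine it with Proposition \ref{prop:deg6:den_sigma}(ii); using $\gamma^2=\gamma+1$ and the basic estimates \eqref{prelim:eq:basics} (whence $X_{k-3}X_k^{-1}\asymp X_{k-2}^{1/\gamma-\gamma^2}=X_{k-2}^{-2}$) this produces, for each large such $k$, an integer $y_k$ with $|x_{k-3,0}\delta_\ell-y_k|\ll X_{k-2}^{-2}$. Setting $\alpha_k=y_k/x_{k-3,0}$ and noting that $\delta_\ell\in[0,1/2]$ (being a limit of values of $\{\,\cdot\,\}$), one gets $H(\alpha_k)\ll X_{k-3}\asymp X_{k-2}^{1/\gamma}$ and hence $|\delta_\ell-\alpha_k|\ll X_{k-2}^{-2-1/\gamma}=X_{k-2}^{-\gamma^2}\ll H(\alpha_k)^{-\gamma^3}$, which is the desired approximation inequality along this subsequence.

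The hard step will be proving $\delta_\ell$ irrational. Suppose $\delta_\ell=a/b$ with $a\in\bZ$ and $b\ge 1$. I would this time multiply the congruence $\sigma_k\equiv\delta_\ell+\cO(X_k^{-1})\mod\bZ$ by $x_{k-2,0}$ (here $X_{k-2}X_k^{-1}\asymp X_{k-2}^{1-\gamma^2}=X_{k-2}^{-\gamma}$) and compare with Proposition \ref{prop:deg6:den_sigma}(i), using the basic estimate $x_{k-1,2}\xi=x_{k-1,0}\xi^3+\cO(X_{k-1}^{-1})$ and $X_{k-1}^{-1}\asymp X_{k-2}^{-\gamma}$. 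This yields $x_{k-2,0}a/b\equiv -36(-1)^\ell x_{k-1,0}\xi^3+\cO(X_{k-2}^{-\gamma})\mod\bZ$; multiplying through by $b$ and using that the left-hand side is then an integer forces $\{36b\,x_{k-1,0}\xi^3\}\ll b\,X_{k-2}^{-\gamma}\to 0$. But by Proposition \ref{propdelta} the sequence $\{x_{k-1,0}\xi^3\}$ converges, through this residue class, to the number $\delta_m(\xi^3)$ with $m\in\{1,2,3\}$ and $m\equiv\ell-1\pmod 3$, which is transcendental---hence irrational and nonzero---by Corollary \ref{prop:delta_R(xi):cor1}; since $x_{k-1,0}\xi^3$ differs from an integer by $\pm\{x_{k-1,0}\xi^3\}$, it follows that $\{36b\,x_{k-1,0}\xi^3\}\to\{36b\,\delta_m(\xi^3)\}\neq 0$, a contradiction. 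Hence $\delta_\ell$ is irrational.

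Once $\delta_\ell$ is known to be irrational, infinitely many of the $\alpha_k$ constructed above must be distinct; their denominators then tend to infinity, and since $\gamma^3>2$ Roth's theorem \cite[Ch.~VI, Thm.~1]{Ca} forbids $\delta_\ell$ from being algebraic. Combined with irrationality this gives transcendence, and the displayed approximation inequality follows. The main obstacle is thus the irrationality step: Proposition \ref{prop:deg6:den_sigma}(ii) already excludes the algebraic irrational case via Roth, but ruling out rationality genuinely requires the sharper part (i) of that proposition, whose role is precisely to couple $x_{k-2,0}\delta_\ell$ to $x_{k-1,0}\xi^3$ modulo $\bZ$ and thereby transfer the \emph{known} transcendence of the degree-three accumulation points $\delta_m(\xi^3)$ to the new quantity $\delta_\ell$. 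The remaining care goes into the bookkeeping of $\gamma$-powers in the error terms and into multiplying congruences modulo $\bZ$ by the integers $x_{k-2,0}$, $x_{k-3,0}$ and $b$.
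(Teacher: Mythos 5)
Your proposal is correct, and the estimates check out ($X_{k-3}X_k^{-1}\asymp X_{k-2}^{-2}$, $X_{k-2}X_k^{-1}\asymp X_{k-2}^{-\gamma}$, $2+1/\gamma=\gamma^2$, $\gamma^3/\gamma=\gamma^2$), but your treatment of the crucial step differs from the paper's. The paper does not argue irrationality separately: it uses \emph{both} parts of Proposition \ref{prop:deg6:den_sigma} simultaneously, eliminating $\sigma_k$ between the two approximate equalities to obtain the exact integer identity $x_{k-2,0}s_k=x_{k-3,0}s_k'-36(-1)^kA_{k-3}$ (via Lemma \ref{lemmaAF}(a)), and then invokes Proposition \ref{propGCD} to conclude $\gcd(x_{k-3,0},s_k)\mid 72$. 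This gives a \emph{lower} bound on the denominators, $\den(\alpha_k)\asymp H(\alpha_k)\asymp X_{k-3}\to\infty$, so the $\alpha_k$ are automatically pairwise distinct of unbounded height, the rational case is excluded at once (a rational $\delta_\ell$ cannot be approximated to order $X_{k-3}^{-\gamma^3}$ by fractions of denominator $\asymp X_{k-3}$ other than itself), and Roth finishes the algebraic irrational case. You instead use part (ii) alone for the approximations, settle for the upper bound $H(\alpha_k)\ll X_{k-3}$, and rule out rationality by a separate argument that uses part (i) to couple $x_{k-2,0}\delta_\ell$ to $x_{k-1,0}\xi^3$ modulo $\bZ$ and then imports the transcendence of the degree-three accumulation points $\delta_m(\xi^3)$ from Corollary \ref{prop:delta_R(xi):cor1}; distinctness of the $\alpha_k$ then follows from irrationality, and Roth handles the rest. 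Both routes are sound. The paper's buys sharper information (two-sided height control, hence the exact size of the approximations) with only Proposition \ref{propGCD} as arithmetic input; yours avoids the gcd bookkeeping for $s_k$ at the cost of leaning on the degree-three theory (which itself rests on Proposition \ref{propGCD}), and correctly anticipates that without denominator control the fractions $y_k/x_{k-3,0}$ need not be reduced, which is exactly why your distinctness-from-irrationality step is needed.
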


\begin{proof}
For each $k\ge 4$, denote by $s_k$ and $s_k'$ the integers which are respectively closest to $x_{k-3,0}\sigma_k$ and to $x_{k-2,0}\sigma_k$.  By Proposition \ref{prop:deg6:den_sigma}, we have
\begin{equation}
 \label{thm:deg6:trans:eq1}
 x_{k-3,0}\sigma_k = s_k +\cO(X_{k-2}^{-2})
 \et
 x_{k-2,0}\sigma_k = s_k' -36(-1)^kx_{k-1,2}\xi + \cO(X_{k-1}^{-1})
\end{equation}
Eliminating $\sigma_k$ between these two equalities and then applying Lemma \ref{lemmaAF} (a), we get
\[
 \begin{aligned}
 x_{k-2,0}s_k
  &= x_{k-3,0}s_k' -36(-1)^kx_{k-3,0}x_{k-1,2}\xi + \cO(X_{k-2}^{-1})\\
  &= x_{k-3,0}s_k' -36(-1)^kA_{k-3} + \cO(X_{k-2}^{-1}),
 \end{aligned}
\]
which means that
\[
 x_{k-2,0}s_k = x_{k-3,0}s_k' - 36(-1)^kA_{k-3}
\]
for each sufficiently large index $k$.  From this we deduce that $\gcd( x_{k-3,0}, s_k)$ is a divisor of $36\gcd( x_{k-3,0}, A_{k-3})$ which itself divides $72$ according to Proposition \ref{propGCD}.

Now, define $u_k$ to be the closest integer to $\delta_k-\sigma_k$ and put
\[
 \alpha_k = \frac{s_k}{x_{k-3,0}} + u_k.
\]
By the above considerations, $\alpha_k$ is a rational number with denominator $\den(\alpha_k) \asymp X_{k-3}$, and Corollary \ref{cor:sigma:limit} gives
\[
 \delta_k  = \sigma_k + u_k + \cO(X_k^{-1}).
\]
Combining the latter estimate with the first equality in \eqref{thm:deg6:trans:eq1}, we obtain
\[
 |\delta_k-\alpha_k|
  \le \Big| \sigma_k-\frac{s_k}{x_{k-3,0}} \Big| + \cO(X_k^{-1})
  \ll X_k^{-1}.
\]
Since $\delta_k$ depends only on the residue class of $k$ modulo $6$, this shows in particular that the sequence $(\alpha_k)_{k\ge 4}$ is bounded, and so $H(\alpha_k)\asymp \den(\alpha_k)\asymp X_{k-3}$.  Thus, for each pair of positive integers $k$ and $\ell$ with $1\le \ell\le 6$ and $k\equiv \ell \mod 6$, we have $|\delta_\ell-\alpha_k|\ll H(\alpha_k)^{-\gamma^3}$.  By Roth's theorem, this implies that each $\delta_\ell$ is  transcendental over $\bQ$.
\end{proof}

\begin{proposition}
 \label{prop:deg6:pfrac}
There exists a constant $c>0$, not depending on $\xi$, such that the inequality
\[
 \{ x_{k,0}\xi^6 \} \le \frac{c}{k}
\]
holds for infinitely many values of $k\ge 1$.
\end{proposition}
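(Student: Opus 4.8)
The plan is to reduce the statement, by telescoping along a fixed residue class modulo $6$, to a question of inhomogeneous Diophantine approximation of the irrational numbers $\delta_\ell$ of Theorem~\ref{thm:deg6:trans}. Fix $r\in\{1,\dots,6\}$. For each $j\ge0$ one has $\sigma_{r+6j}=(x_{r+6(j+1),0}-x_{r+6j,0})\xi^6$ by definition, while Corollary~\ref{cor:sigma:limit} together with the $6$-periodicity of $k\mapsto\delta_k$ gives $\sigma_{r+6j}\equiv\delta_r+\cO(X_{r+6j}^{-1})\mod \bZ$. Summing these congruences over $j$, and using that $(X_k)_{k\ge1}$ grows fast enough (indeed doubly exponentially) for the series $\sum_{i\ge0}X_{r+6i}^{-1}$ to converge, one obtains a real number $\beta_r$ (depending on $\xi$ and $r$) such that
\[
 x_{r+6j,0}\xi^6\equiv\beta_r+j\delta_r+\cO(X_{r+6j}^{-1})\mod \bZ\qquad(j\ge0),
\]
the implied constant depending only on $\xi$; in particular $\{x_{r+6j,0}\xi^6\}\le\{\beta_r+j\delta_r\}+\cO(X_{r+6j}^{-1})$.

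Next, $\delta_r$ is transcendental, hence irrational, by Theorem~\ref{thm:deg6:trans}. By classical results on Diophantine approximation---Dirichlet's theorem in the degenerate case $\beta_r\in\bZ\delta_r+\bZ$, and a theorem of Minkowski on inhomogeneous approximation otherwise---there are an absolute constant $c_0>0$ and infinitely many positive integers $j$ with $\{\beta_r+j\delta_r\}\le c_0/j$. For every such $j$ the previous inequality gives $\{x_{r+6j,0}\xi^6\}\le c_0/j+\cO(X_{r+6j}^{-1})$, and since $X_{r+6j}^{-1}$ tends to $0$ faster than any power of $1/j$, there is an index $j_0(\xi)$ past which the error term is at most $c_0/j$; thus $\{x_{r+6j,0}\xi^6\}\le 2c_0/j$ for every admissible $j\ge j_0(\xi)$. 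Writing $k=r+6j$ and noting $j\ge k/12$ once $k\ge12$, this yields $\{x_{k,0}\xi^6\}\le 24c_0/k$ for the infinitely many such $k$, proving the proposition with $c=24c_0$.

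The point that must be checked with care is that $c$ can indeed be chosen independently of $\xi$. This works out because all of the $\xi$-dependence is confined to the shift $\beta_r$ and to the error term $\cO(X_{r+6j}^{-1})$: the latter is doubly exponentially small in $j$ and hence is absorbed into $c_0/j$ for $j$ large, while the constant $c_0$ supplied by Dirichlet's and Minkowski's theorems is uniform over all shifts; converting between the indices $k$ and $j$ costs only the absolute factor $6$. The sole input beyond the estimates already established in this section is the irrationality of $\delta_r$, i.e.\ Theorem~\ref{thm:deg6:trans}.
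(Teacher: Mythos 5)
Your argument is correct, but it reaches the conclusion by a genuinely different route than the paper. You fix the residue class $r$ once and for all, telescope the congruence $\sigma_{r+6i}\equiv\delta_r+\cO(X_{r+6i}^{-1})$ modulo $\bZ$ (Corollary \ref{cor:sigma:limit}) from the fixed base index $r$, and absorb the convergent error series into a limiting offset $\beta_r$, so that everything reduces to the classical statement that $\{\beta_r+j\delta_r\}\le c_0/j$ for infinitely many positive integers $j$ with an absolute constant $c_0$. The paper instead telescopes from a moving base index $\ell$ with $X\le\ell\le 2X$, so its target $x_{\ell,0}\xi^6$ changes with $X$ and the shift $i$ must be kept $\le 6X$ in order that $k=\ell+6i\asymp X$; this forces a uniform inhomogeneous approximation statement at well-chosen values of $X$, which the paper extracts from Minkowski's second convex body theorem combined with Jarn\'{\i}k's transference theorem, again using only the irrationality of $\delta_1$ from Theorem \ref{thm:deg6:trans}. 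Your reduction buys simplicity (no geometry of numbers is needed), at the price that the classical input must be the \emph{one-sided} version: Minkowski's $1/4$-theorem, as usually stated, produces infinitely many integers $q$ of either sign, whereas you need infinitely many positive $j$. The statement you actually use is nevertheless true with an absolute constant and even without the non-degeneracy hypothesis: it goes back to Chebyshev, and it also follows in a few lines from the three-distance theorem applied at the convergent denominators $q_n$ of $\delta_r$ (every real number lies within $2/q_n$ of some point $\{j\delta_r\}$ with $1\le j\le q_n$, and if the same $j$ recurred for all $n$ one would be in your degenerate case, which Dirichlet handles). With that citation adjusted, the remaining steps---the doubly exponentially small error $\cO(X_{r+6j}^{-1})$ being absorbed into $c_0/j$ past some $j_0(\xi)$, and the passage from $j$ to $k=r+6j$ costing only an absolute factor---are sound, and your constant is indeed independent of $\xi$, just as the paper's explicit value $76$ is.
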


\begin{proof}
By Theorem \ref{thm:deg6:trans}, we have $\delta_1\notin\bQ$.  Therefore, there exist arbitrarily large real numbers $X$ such that the convex body $\cC_X$ of $\bR^2$ defined by
\[
 |x| \le X, \quad |y+x\delta_1| \le (2X+1)^{-1}
\]
does not contain any non-zero point $(x,y)$ of $\bZ^2$.  Fix such a value of $X$, with $X\ge 6$.  Then, by definition,  the first minimum of $\cC_X$ with respect to $\bZ^2$ is at least $1$ and so, according to Minkowski's second convex body theorem, its second minimum is at most $4/\vol(\cC_X)=2+1/X\le 3$.  By a result of Jarn\'{\i}k \cite{J}, we conclude that $\bR^2=\bZ^2+3\cC_X$ (see also \cite[Ch.~2, \S13.2, Thm.~1]{GL}).  In particular, this means that, for any $r\in\bR$, there exists a point $(m,n)\in\bZ^2$ such that $(m,r+n)\in 3\cC_X$, a condition which translates into
\[
 |m| \le 3X \et |r+n+m\delta_1| \le 3/(2X+1) \le 3/(2X).
\]
Applying this result with $r$ replaced by $r+[3X]\delta_1$ and putting $i=[3X]+m$ for a corresponding choice of $m$, we conclude that, for each $r\in\bR$, there exists an integer $i$ satisfying
\[
 0\le i \le 6X \et \{r+i\delta_1\} \le 3/(2X).
\]

Now, choose an integer $\ell$ with $X\le \ell \le 2X$ and $\ell\equiv 1\mod 6$.  By the above, there exists $i\in\bZ$ with $0\le i \le 6X$ such that
\[
 \{ x_{\ell,0}\xi^6 + i\delta_1 \} \le 3/(2X).
\]
Put $k=\ell+6i$.  Since
\[
 x_{k,0}\xi^6
 = x_{\ell,0}\xi^6 + \sum_{j=0}^{i-1} \sigma_{\ell+6j}
 = x_{\ell,0}\xi^6 + i\delta_1 + \sum_{j=0}^{i-1} (\sigma_{\ell+6j}-\delta_{\ell+6j}),
\]
it follows from Corollary \ref{cor:sigma:limit} that
\[
 \{ x_{k,0}\xi^6 \}
 \le \{ x_{\ell,0}\xi^6 + i\delta_1\} + \sum_{j=0}^{i-1} \{\sigma_{\ell+6j}-\delta_{\ell+6j}\}
 \le \frac{3}{2X} + \sum_{j=0}^{i-1} \frac{c_1}{X_{\ell+6j}}
\]
with a constant $c_1>0$ depending only on $\xi$.  If $X$ is large enough, this gives
$\{ x_{k,0}\xi^6 \} \le 2/X$, and, as $k = \ell + 6i \le 38X$, we conclude that $\{ x_{k,0}\xi^6 \} \le 76/k$.
\end{proof}

The next step in the proof of Theorem \ref{intro:thmdeg6} is to establish a lower bound for the numbers $\{x_{k,0}\xi^6\}$.  To this end, we first show that the hypotheses of Lemma \ref{lemmaM} are satisfied for $d=6$ with $m_3=20$.  This is accomplished by part (ii) of the next lemma. Thanks to Theorem \ref{thmR}, this also completes the proof of Theorem \ref{intro:thmR} for $d=6$.

\begin{lemma}
 \label{lemma:X6}
For each integer $k\ge 1$, we have
\begin{itemize}
 \item[(i)]
 $x_{k,0}x_{k+1,0}x_{k+2,2}x_{k+4,2}\xi^2 \equiv 8(-1)^kx_{k+1,2}\xi +\cO(X_{k+1}^{-1}) \mod \bZ$,
 \smallskip
 \item[(ii)]
 $x_{k,0}x_{k+1,0}x_{k+2,0}x_{k+4,0}\xi^6 \equiv 20(-1)^kx_{k+1,2}\xi + \cO(X_{k+1}^{-1}) \mod \bZ$.
\end{itemize}
\end{lemma}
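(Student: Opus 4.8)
The plan is to deduce part (ii) from part (i), and to obtain part (i) almost immediately from the sharp ``diagonal'' estimate of Lemma~\ref{lemma:Est3}~(ii) (the one carrying the $\cO(X_{k+1}^{-2})$ remainder) together with Lemma~\ref{prelim:lemmaX}~(i).

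For part (i), I would apply Lemma~\ref{lemma:Est3}~(ii) with $k$ replaced by $k+1$, which produces an integer $h_k$ with
\[
 x_{k+1,0}x_{k+4,2}\xi^2 = h_k - 4\xi^2 + \cO(X_{k+2}^{-2}).
\]
Multiplying both sides by the \emph{integer} $x_{k,0}x_{k+2,2}$ and using $|x_{k,0}x_{k+2,2}|\ll X_kX_{k+2}$ together with $X_kX_{k+2}\cdot X_{k+2}^{-2}\ll X_{k+1}^{-1}$, the remainder stays $\cO(X_{k+1}^{-1})$ and the term $h_k x_{k,0}x_{k+2,2}$ vanishes modulo $\bZ$, so
\[
 x_{k,0}x_{k+1,0}x_{k+2,2}x_{k+4,2}\xi^2 \equiv -4\,x_{k,0}x_{k+2,2}\xi^2 + \cO(X_{k+1}^{-1}) \mod \bZ.
\]
Since $x_{k,0}x_{k+2,2}\xi^2 = x_{k,0}x_{k+2,0}\xi^4 + \cO(X_{k+1}^{-1})$, Lemma~\ref{prelim:lemmaX}~(i) gives $x_{k,0}x_{k+2,2}\xi^2 \equiv -2(-1)^kx_{k+1,2}\xi + \cO(X_{k+1}^{-1}) \mod \bZ$, and multiplying by $-4$ yields (i).

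For part (ii), I would start from Lemma~\ref{prelim:lemmaX}~(i) with $k$ replaced by $k+2$, namely $x_{k+2,0}x_{k+4,0}\xi^4 = B_{k+2} - 2(-1)^kx_{k+3,2}\xi + \cO(X_{k+3}^{-1})$, and multiply through by $x_{k,0}x_{k+1,0}\xi^2$, which has size $\asymp X_{k+2}$ and satisfies $X_{k+2}X_{k+3}^{-1}\ll X_{k+1}^{-1}$. This reduces the left side, modulo $\cO(X_{k+1}^{-1})$, to $x_{k,0}x_{k+1,0}B_{k+2}\xi^2 - 2(-1)^kx_{k,0}x_{k+1,0}x_{k+3,2}\xi^3$. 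Writing $B_{k+2}=x_{k+2,2}x_{k+4,2}-3x_{k+3,2}$, the first term is the left side of (i) minus $3x_{k,0}x_{k+1,0}x_{k+3,2}\xi^2$, and one checks separately that $x_{k,0}x_{k+1,0}x_{k+3,2}\xi^2 \equiv \cO(X_{k+1}^{-1}) \mod \bZ$; so the first term is $\equiv 8(-1)^kx_{k+1,2}\xi$. For the second term, replacing $x_{k+1,0}x_{k+3,2}\xi$ by $A_{k+1}+\cO(X_{k+2}^{-1})$ (Lemma~\ref{lemmaAF}~(a)) leaves one to evaluate $x_{k,0}A_{k+1}\xi^2$ modulo $\bZ$, which comes out to $-6x_{k+1,2}\xi+\cO(X_{k+1}^{-1})$, so the second term is $\equiv 12(-1)^kx_{k+1,2}\xi$; adding $8+12=20$ gives the claim. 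Both auxiliary congruences are run the same way: expand $A_{k+1}=x_{k+1,1}x_{k+3,2}+(-1)^kx_{k+2,2}$, use \eqref{prelim:eq:1,0,1} to rewrite $x_{k,0}x_{k+1,1}$ as $x_{k,1}x_{k+1,0}-(-1)^kx_{k-1,0}$, recognize the resulting triple and quadruple products of the $x_{i,j}$ via Lemma~\ref{lemmaAF}~(a),(b),(e),(f), discard everything that is an integer, and — in the $\xi^2$ case — reduce the surviving multiples of $\xi$ once more using \eqref{eqAkxi} and Lemma~\ref{lemmaAF}~(b),(f).

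The main obstacle is not conceptual but the control of error terms. Every product appearing here is individually of size far exceeding the target $\asymp X_{k+1}$ — for instance $x_{k,0}x_{k+1,0}x_{k+2,2}x_{k+4,2}\asymp X_kX_{k+1}X_{k+2}X_{k+4}$ — so each substitution must be organized so that what is discarded is already $\cO(X_{k+1}^{-1})$, while the genuinely large parts have to be kept as honest products of integers in order to vanish modulo $\bZ$. This is precisely why part (i) must rest on Lemma~\ref{lemma:Est3}~(ii) with its sharp $\cO(X_{k+2}^{-2})$ remainder: after multiplication by $x_{k,0}x_{k+2,2}\asymp X_kX_{k+2}$ one lands back at $\cO(X_{k+1}^{-1})$, whereas a mere $\cO(X_{k+2}^{-1})$ remainder would be useless. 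The same bookkeeping — distinguishing remainders of order $X^{-2}$ from those of order $X^{-1}$ — governs every appeal to Lemma~\ref{lemma:Bil} in the subsidiary estimates, and keeping it straight is the only delicate point.
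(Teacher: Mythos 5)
Your argument is correct and essentially coincides with the paper's proof: part (i) is obtained exactly as in the paper (Lemma \ref{lemma:Est3} (ii) at index $k+1$, multiplied by the integer $x_{k,0}x_{k+2,2}$ so that the sharp $\cO(X_{k+2}^{-2})$ remainder lands back at $\cO(X_{k+1}^{-1})$, then Lemma \ref{prelim:lemmaX} (i)), and part (ii) uses the same decomposition, namely Lemma \ref{prelim:lemmaX} (i) at index $k+2$ multiplied by $x_{k,0}x_{k+1,0}\xi^2$, with the three resulting products handled by part (i), by the vanishing of $x_{k,0}x_{k+1,0}x_{k+3,0}\xi^4$ modulo $\bZ$, and by the $-6x_{k+1,2}\xi$ evaluation. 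The only cosmetic difference is that where you re-derive that last evaluation by expanding $A_{k+1}$, the paper simply quotes Lemma \ref{prelim:lemmaX} (ii); the coefficient bookkeeping $8+12=20$ and the error control are the same.
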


\begin{proof}
Part (i) follows from the following congruences modulo $\bZ$:
\[
 \begin{aligned}
 x_{k,0}x_{k+1,0}x_{k+2,2}x_{k+4,2}\xi^2
  &\equiv x_{k,0}x_{k+2,2}(-4\xi^2+\cO(X_{k+2}^{-2}))
     &&\text{by Lemma \ref{lemma:Est3} (ii),}\\
  &\equiv -4x_{k,0}x_{k+2,0}\xi^4 + \cO(X_{k+1}^{-1}) \\
  &\equiv 8(-1)^kx_{k+1,2}\xi + \cO(X_{k+1}^{-1})
     &&\text{by Lemma \ref{prelim:lemmaX} (i).}
 \end{aligned}
\]

To prove (ii), we first note that Lemma \ref{prelim:lemmaX} (i) provides
\[
 x_{k,0}x_{k+2,0}\xi^4
 = x_{k,2}x_{k+2,2} - 3x_{k+1,0}\xi^2 - 2(-1)^kx_{k+1,0}\xi^3 + \cO(X_{k+1}^{-1}).
\]
In this estimate, we replace the index $k$ by $k+2$ and then multiply both sides of the resulting equality by $x_{k,0}x_{k+1,0}\xi^2$.  The conclusion then follows by estimating the first product in the right hand side using Part (i) above, the third product using Lemma \ref{prelim:lemmaX} (ii), and the second product using the congruence
\[
 x_{k,0}x_{k+1,0}x_{k+3,0}\xi^4
 \equiv 2(-1)^k x_{k,0}x_{k+2,2}\xi + \cO(X_{k+1}^{-1})
 \equiv \cO(X_{k+1}^{-1}) \mod \bZ
\]
which is obtained by applying first Lemma \ref{prelim:lemmaX} (i) and then Lemma \ref{lemmaAF} (a).
\end{proof}

It is possible that the inequality of Proposition \ref{prop:deg6:pfrac} is optimal and that we have $\{x_{k,0}\xi^6\}\gg k^{-1}$ for all $k\ge 1$.  Here, by combining the congruence of Lemma \ref{lemma:X6} (ii) with Corollary \ref{cor:sigma:limit}, we simply prove the following lower bound.

\begin{proposition}
 \label{prop:deg6:lowerbound}
For any integer $k\ge 1$, we have $\{x_{k,0}\xi^6\} \gg k^{-2\gamma^7}$.
\end{proposition}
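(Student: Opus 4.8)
The plan is to exploit the telescoping furnished by Corollary~\ref{cor:sigma:limit} together with the arithmetic content of Lemma~\ref{lemma:X6}(ii). Fix a large integer $k$; the finitely many small values of $k$ will be absorbed into the implied constant at the end, using that $\{x_{k,0}\xi^6\}>0$ because $\xi^6$ is irrational and $x_{k,0}\neq0$. Write $k=\ell+6i$ with $i\ge0$ and $\ell$ in the residue class of $k$ modulo $6$. Since the $\delta_j$ are $6$-periodic and $\sum_{j\ge0}X_{\ell+6j}^{-1}\ll X_\ell^{-1}$, Corollary~\ref{cor:sigma:limit} gives
\[
 x_{k,0}\xi^6\equiv x_{\ell,0}\xi^6+i\,\delta_\ell+\cO(X_\ell^{-1})\pmod{\bZ},
\]
the error being uniform in $i$, and hence
\[
 \{x_{k,0}\xi^6\}\ge\{x_{\ell,0}\xi^6+i\,\delta_\ell\}-c\,X_\ell^{-1}.
\]
I would take the pivot $\ell$ of moderate size (of order $\log\log k$), so that $X_\ell$ is a fixed power of $k$: large enough that $c\,X_\ell^{-1}$ stays below the target bound, yet small enough that $i\asymp k$. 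Matters thus reduce to bounding $\{x_{\ell,0}\xi^6+i\,\delta_\ell\}$ from below.

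The key non-trivial input is Lemma~\ref{lemma:X6}(ii). Using $x_{m+1,2}\xi=x_{m+1,0}\xi^3+\cO(X_{m+1}^{-1})$ together with the degree-three analysis (by Corollary~\ref{prop:delta_R(xi):cor1} and Proposition~\ref{propdelta}, the numbers $\{x_{m+1,0}\xi^3\}$ accumulate at the nonzero transcendental values $\delta_j(\xi^3)$, so $\|20\,x_{m+1,2}\xi\|$ is bounded below by a positive constant for all large $m$), Lemma~\ref{lemma:X6}(ii) yields $\{x_{m,0}x_{m+1,0}x_{m+2,0}x_{m+4,0}\,\xi^6\}\gg1$; applying it with $m$ replaced by $m-4$ to minimise the co-factor $x_{m-4,0}x_{m-3,0}x_{m-2,0}\asymp X_m^{2\gamma^{-2}}$, one obtains the unconditional estimate $\{x_{m,0}\xi^6\}\gg X_m^{-2\gamma^{-2}}$, valid at every index $m$. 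What makes this useful is that, up to the error of the previous paragraph, the numbers $\{x_{\ell,0}\xi^6+j\,\delta_\ell\}$ for $0\le j\le i$ are the numbers $\{x_{\ell+6j,0}\xi^6\}$; so it is this unconditional estimate — fed in at the pivot and all along its orbit, in combination with the explicit good rational approximations to $\delta_\ell$ supplied by Theorem~\ref{thm:deg6:trans} — that forces $\{x_{\ell,0}\xi^6+i\,\delta_\ell\}$ away from $0$. Bookkeeping the exponents through the telescoping (error $X_\ell^{-1}$) and the co-factor division then converts $X_m^{-2\gamma^{-2}}$, read with $X_\ell$ a suitable power of $k$, into the polynomial bound $\{x_{k,0}\xi^6\}\gg k^{-2\gamma^7}$.

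The main obstacle is precisely the self-referential nature of the reduction in the first paragraph: since $x_{\ell,0}\xi^6+i\,\delta_\ell\equiv x_{k,0}\xi^6\pmod{\bZ}$, that reduction is vacuous unless one genuinely injects the independent information of Lemma~\ref{lemma:X6}(ii) rather than re-expressing the very quantity one wishes to estimate. Making this precise calls for a careful choice of pivot balancing the two competing powers of $k$, and for controlling how fast the $\delta_\ell$-orbit can approach $0$ over the first $\asymp k$ steps; this last point is where the exponent $2\gamma^7$ is pinned down. Two routine verifications remain: uniformity over the six residue classes of $k$ modulo $6$, and the treatment of small $k$ by enlarging the implied constant.
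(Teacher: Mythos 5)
Your set-up (telescoping via Corollary \ref{cor:sigma:limit} to a pivot $\ell$ with $X_\ell$ a fixed power of $k$, and the unconditional bound $\{x_{m,0}\xi^6\}\gg X_{m-2}^{-2}\asymp X_m^{-2/\gamma^2}$ obtained from Lemma \ref{lemma:X6}(ii) at index $m-4$ together with the degree-three lower bound for $\{20x_{m-3,0}\xi^3\}$) is sound and matches the paper's ingredients. But there is a genuine gap exactly where you flag it: you never prove that the orbit $\{x_{\ell,0}\xi^6+i\delta_\ell\}$ stays away from $0$ for $i$ up to $\asymp k$. The unconditional estimate is useless if fed in ``along the orbit'': at the target index it gives $\{x_{k,0}\xi^6\}\gg X_k^{-2/\gamma^2}$, which is doubly exponentially small in $k$, and at the pivot it says nothing about $x_{\ell,0}\xi^6+i\delta_\ell$ once $i$ is large. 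The vague appeal to the good rational approximations of $\delta_\ell$ from Theorem \ref{thm:deg6:trans} is not an argument; if one tries to make it one (replace $\delta_\ell$ by a rational $p/q$ with $q$ essentially $|x_{\ell-3,0}|$ and bound $\{x_{\ell,0}\xi^6+ip/q\}\ge q^{-1}\{q\,x_{\ell,0}\xi^6\}$), one still needs a lower bound for $\{q\,x_{\ell,0}\xi^6\}$ with $q$ tied to $x_{\ell-3,0}$, i.e.\ one is forced back to the four-fold product congruence and to the quantitative approximation of $\delta_\ell$ — which is precisely the content of the paper's argument and is absent from yours.

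Concretely, the missing ingredient is Proposition \ref{prop:deg6:den_sigma}(ii), which you never invoke. The paper multiplies the telescoped congruence
\[
 x_{\ell+6i,0}\xi^6\equiv x_{\ell,0}\xi^6+i\sigma_\ell+\cO(iX_\ell^{-1})\pmod\bZ
\]
by the integer $x_{\ell-4,0}x_{\ell-3,0}x_{\ell-2,0}$. The drift term is then harmless because $x_{\ell-3,0}\sigma_\ell\equiv\cO(X_{\ell-2}^{-2})\bmod\bZ$, so its contribution is $\cO(iX_{\ell-3}^{-1})$ modulo $\bZ$, negligible for $0\le i\le c\,X_{\ell-3}$; meanwhile Lemma \ref{lemma:X6}(ii) turns the main term into $20(-1)^\ell x_{\ell-3,0}\xi^3$ modulo $\bZ$, which Lemma \ref{lemmaxRbis} (with $R=20T^3$) keeps at distance $\gg1$ from the integers. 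Dividing by $|x_{\ell-4,0}x_{\ell-3,0}x_{\ell-2,0}|\asymp X_{\ell-2}^2$ gives $\{x_{\ell+6i,0}\xi^6\}\gg X_{\ell-2}^{-2}$ uniformly for $i\ll X_{\ell-3}$, and only then does the pivot choice ($X_{\ell-9}\ll k\ll X_{\ell-3}$, $k\equiv\ell\bmod 6$) produce the exponent $2\gamma^7$. Without this step — controlling $i\sigma_\ell$ modulo $\bZ$ after multiplication by a suitable cofactor — your reduction remains, as you yourself note, a restatement of the quantity to be bounded, so the proposal does not constitute a proof.
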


\begin{proof}
Let $k\ge 5$ and $i\ge 0$ be integers.  By Corollary \ref{cor:sigma:limit}, we have
\[
 x_{k+6i,0}\xi^6
 = x_{k,0}\xi^6 + \sum_{j=0}^{i-1} \sigma_{k+6j}
 \equiv x_{k,0}\xi^6 + i\sigma_{k} +\cO(iX_k^{-1}) \mod\bZ,
\]
where the constant involved in the symbol $\cO$ depends only on $\xi$.  Multiplying both sides of this relation by the integer $x_{k-4,0}x_{k-3,0}x_{k-2,0}$ and then applying Lemma \ref{lemma:X6} (ii) and Proposition \ref{prop:deg6:den_sigma} (ii) to estimate the products in the right hand side, we obtain
\[
 \begin{aligned}
 x_{k-4,0}x_{k-3,0}x_{k-2,0}x_{k+6i,0}\xi^6
 &\equiv 20(-1)^k x_{k-3,2}\xi + \cO(iX_{k-3}^{-1}) \\
 &\equiv 20(-1)^k x_{k-3,0}\xi^3 + \cO(iX_{k-3}^{-1})\mod\bZ.
 \end{aligned}
\]
By Lemma \ref{lemmaxRbis} applied with $R=20T^3$, the quantity $\{20x_{k-3,0}\xi^3\}$ is bounded below by a positive constant $c_1$ depending only on $\xi$.  Thus, the above congruence implies the existence of a constant $c_2 = c_2(\xi)>0$ such that
\[
 \{x_{k-4,0}x_{k-3,0}x_{k-2,0}x_{k+6i,0}\xi^6\} \ge c_1/2
 \quad\text{whenever}\quad 0\le i\le c_2X_{k-3},
\]
and so $\{x_{k+6i,0}\xi^6\} \gg X_{k-2}^{-2}$ for the same values of $i$.

The conclusion follows easily by observing that, for any sufficiently large integer $j$, there exists an integer $k\ge 10$ with $k\equiv j \mod 6$ such that $c_2X_{k-9} < j \le c_2X_{k-3}$.   Assuming $k\le c_2X_{k-9}$ as we may, this integer takes the form $j=k+6i$ for some $i\in\bZ$ with $0\le i\le c_2X_{k-3}$ and so $\{x_{j,0}\xi^6\} \gg X_{k-2}^{-2} \gg j^{-2\gamma^7}$.
\end{proof}

We conclude by proving Theorem \ref{intro:thmdeg6} in the following more precise form.

\begin{proposition}
 \label{prop:Pk}
For each integer $k\ge 1$, denote by $s_k$ the closest integer to $x_{k,0}\xi^6$ and, when $k\ge 2$, define
\[
 P_k(T) = 2T^6 +(-1)^k( s_{k-1}Q_k(T) - s_kQ_{k+1}(T) + s_{k+1}Q_{k-1}(T) ).
\]
Then, there is a constant $c=c(\xi)>0$ such that, for infinitely many values of $k$, the polynomial $P_k$ admits a root $\alpha_k$ with $H(\alpha_k)$ arbitrarily large and
\begin{equation}
 \label{prop:Pk:eq1}
 |\xi-\alpha_k| \le c H(\alpha_k)^{-\gamma-1}(\log \log H(\alpha_k))^{-1}.
\end{equation}
\end{proposition}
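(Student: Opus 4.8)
The plan is to study $P_k$ in a neighbourhood of $T=\xi$ by exploiting the identity $x_{k-1,0}Q_k(T)-x_{k,0}Q_{k+1}(T)+x_{k+1,0}Q_{k-1}(T)=-2(-1)^k$ of Lemma~\ref{lemmaQ} (formula~\eqref{lemmaQ:eq2}). Put $\eta_j:=x_{j,0}\xi^6-s_j$, so that $|\eta_j|=\{x_{j,0}\xi^6\}\le 1/2$. Substituting $s_j=x_{j,0}\xi^6-\eta_j$ in the definition of $P_k$ and invoking the identity, one sees that the part of $P_k$ of degree at most $2$ equals, as a polynomial,
\[
 -2\xi^6-(-1)^k\bigl(\eta_{k-1}Q_k(T)-\eta_kQ_{k+1}(T)+\eta_{k+1}Q_{k-1}(T)\bigr),
\]
an expression which has integer coefficients, being the degree-$\le 2$ part of $P_k$. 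Evaluating at $\xi$, and then differentiating first and evaluating at $\xi$ (the derivative of the constant $-2\xi^6$ vanishing), we obtain
\[
 P_k(\xi)=-(-1)^k\bigl(\eta_{k-1}Q_k(\xi)-\eta_kQ_{k+1}(\xi)+\eta_{k+1}Q_{k-1}(\xi)\bigr),
\]
\[
 P_k'(\xi)=12\xi^5-(-1)^k\bigl(\eta_{k-1}Q_k'(\xi)-\eta_kQ_{k+1}'(\xi)+\eta_{k+1}Q_{k-1}'(\xi)\bigr).
\]
Moreover, since $\|Q_j\|\asymp X_{j-1}$, the displayed formula for the degree-$\le2$ part of $P_k$ gives $\|P_k\|\ll 1+|\eta_{k-1}|X_{k-1}+|\eta_k|X_k+|\eta_{k+1}|X_{k-2}$, and the same bound holds for $\|P_k'\|$ and $\|P_k''\|$; in particular $H(\alpha_k)\ll\|P_k\|$ for every root $\alpha_k$ of $P_k$, by the usual estimate relating the height of an algebraic number to the norm of a non-zero integer polynomial vanishing at it.

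Next I would fix the values of $k$. By Proposition~\ref{prop:deg6:pfrac} there are infinitely many $m$ with $\{x_{m,0}\xi^6\}\ll 1/m$; taking $k=m-1$ for these $m$ ensures $|\eta_{k+1}|=\{x_{k+1,0}\xi^6\}\ll 1/k$. Set $\theta_k:=\{x_{k,0}\xi^6\}=|\eta_k|$; by Proposition~\ref{prop:deg6:lowerbound} we have $\theta_k\gg k^{-2\gamma^7}$, which, in view of the growth $X_{k+1}\asymp X_k^\gamma$ (whence $\log X_k\asymp\gamma^k$), is far larger than $X_{k-1}/X_k$, so that $\theta_kX_k\gg X_{k-1}$ for all large such $k$. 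Feeding into the formulas above the estimates $|Q_{k-1}(\xi)|\ll X_{k+1}^{-1}$, $|Q_k(\xi)|\ll X_{k+2}^{-1}$, $|Q_{k+1}(\xi)|\ll X_{k+3}^{-1}$ and the two-sided estimates $|Q_{k+1}'(\xi)|\asymp X_k$, $|Q_k'(\xi)|\ll X_{k-1}$, $|Q_{k-1}'(\xi)|\ll X_{k-2}$ of Lemma~\ref{lemmaQ}, and using $|\eta_{k-1}|\le1/2$ and $|\eta_{k+1}|\ll1/k$, I get
\[
 |P_k(\xi)|\ll\frac{|\eta_{k+1}|}{X_{k+1}}+\frac{1}{X_{k+2}}\ll\frac{1}{kX_{k+1}},\qquad |P_k'(\xi)|\asymp\theta_kX_k,\qquad \|P_k\|\ll\theta_kX_k,
\]
the middle relation because in $P_k'(\xi)$ the term $\eta_kQ_{k+1}'(\xi)$, of size $\asymp\theta_kX_k$, dominates the others ($\ll X_{k-1}$, $\ll X_{k-2}$, and $O(1)$). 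In particular $H(\alpha_k)\ll\theta_kX_k$ for every root $\alpha_k$ of $P_k$.

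Since $|P_k(\xi)|$ times the supremum of $|P_k''|$ on $[\xi-1,\xi+1]$ is $\ll\theta_kX_k^{1-\gamma}/k\to0$, hence much smaller than $|P_k'(\xi)|^2\asymp(\theta_kX_k)^2$, an elementary application of the intermediate value theorem to $P_k$ on $[\xi-h,\xi+h]$ with $h\asymp|P_k(\xi)|/|P_k'(\xi)|$ (the quadratic error term being negligible there) produces a real root $\alpha_k$ of $P_k$ with
\[
 |\xi-\alpha_k|\ll\frac{|P_k(\xi)|}{|P_k'(\xi)|}\ll\frac{1}{kX_{k+1}\,\theta_kX_k}\asymp\frac{X_k^{-\gamma-1}}{k\,\theta_k}.
\]
Now $H(\alpha_k)\ll\theta_kX_k$ gives $H(\alpha_k)^{-\gamma-1}\gg\theta_k^{-\gamma-1}X_k^{-\gamma-1}\ge\theta_k^{-1}X_k^{-\gamma-1}$ (as $0<\theta_k\le1/2$ and $\gamma+1>1$), and $\log\log H(\alpha_k)\ll\log\log X_k\ll k$, so that $H(\alpha_k)^{-\gamma-1}\bigl(\log\log H(\alpha_k)\bigr)^{-1}\gg\theta_k^{-1}X_k^{-\gamma-1}/k$, matching the upper bound for $|\xi-\alpha_k|$ up to a constant. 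Thus \eqref{prop:Pk:eq1} holds for a suitable $c=c(\xi)>0$. Finally $H(\alpha_k)$ is unbounded along this sequence of $k$: otherwise the $\alpha_k$ would take only finitely many values while $|\xi-\alpha_k|\to0$, forcing $\alpha_k=\xi$ for large $k$, contradicting the transcendence of $\xi$; restricting to a subsequence on which $H(\alpha_k)\to\infty$ completes the proof.

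Most of the work is the bookkeeping compressed into the phrase ``invoking the identity''. The step that genuinely requires care, and which I expect to be the main obstacle, is the coefficient-wise cancellation yielding $\|P_k\|\ll\theta_kX_k$ (rather than the naive $\ll X_k^2$): only this improvement brings $H(\alpha_k)$ down to the size needed to reach the exponent $-\gamma-1$. Hand in hand with it goes the sharp lower bound $|P_k'(\xi)|\gg\theta_kX_k$, which relies both on the two-sided estimate $|Q_{k+1}'(\xi)|\asymp X_k$ of Lemma~\ref{lemmaQ} and on Proposition~\ref{prop:deg6:lowerbound} to keep $\theta_k$ from being super-exponentially small; the loss $(\log\log H(\alpha_k))^{-1}\asymp1/k$ in the final inequality is then exactly what is supplied by the $1/k$ in Proposition~\ref{prop:deg6:pfrac}.
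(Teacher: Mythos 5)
Your proposal is correct and follows essentially the same route as the paper: the same decomposition of $P_k$ via the identity \eqref{lemmaQ:eq2}, the same estimates from Lemma \ref{lemmaQ}, Proposition \ref{prop:deg6:lowerbound} to get $\|P_k\|\ll\{x_{k,0}\xi^6\}X_k\asymp|P_k'(\xi)|$, and Proposition \ref{prop:deg6:pfrac} to supply the factor $1/k\gg(\log\log H(\alpha_k))^{-1}$. The only cosmetic difference is that you produce a real root by a Taylor/intermediate-value argument where the paper simply invokes the inequality $|\xi-\alpha_k|\le 6|P_k(\xi)|/|P_k'(\xi)|$ for the root closest to $\xi$; both are fine.
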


\begin{proof}
Thanks to the formula \eqref{lemmaQ:eq2} of Lemma \ref{lemmaQ}, we have
\[
 \begin{aligned}
 P_k(T)
 = 2(T^6-\xi^6)
   + (-1)^k\big(
    (s_{k-1}-x_{k-1,0}\xi^6)Q_k(T) &- (s_{k}-x_{k,0}\xi^6)Q_{k+1}(T)\\
     &+ (s_{k+1}-x_{k+1,0}\xi^6)Q_{k-1}(T) \big).
 \end{aligned}
\]
Combining this with the estimates \eqref{lemmaQ:eq1} of Lemma \ref{lemmaQ}, we deduce that
\[
 \begin{aligned}
 \|P_k\| &\le c_1 \{x_{k,0}\xi^6\} X_k + c_2 X_{k-1},\\
 |P_k'(\xi)| &\ge c_1 \{x_{k,0}\xi^6\} X_k - c_2 X_{k-1},\\
 |P_k(\xi)| &\le c_1 \{x_{k+1,0}\xi^6\} X_{k+1}^{-1} + c_2 X_{k+2}^{-1}
 \end{aligned}
\]
for some positive constants $c_1$ and $c_2$ that are independent of $k$. In view of Proposition \ref{prop:deg6:lowerbound}, we conclude that
\[
 \|P_k\| \ll \{x_{k,0}\xi^6\} X_k,\quad
 |P_k'(\xi)| \gg \{x_{k,0}\xi^6\} X_k \et
 |P_k(\xi)| \ll \{x_{k+1,0}\xi^6\} X_{k+1}^{-1}.
\]
Therefore the root $\alpha_k$ of $P_k$ which is closest to $\xi$ satisfies
\[
 |\xi-\alpha_k|
 \le 6\frac{|P_k(\xi)|}{|P_k'(\xi)|}
 \ll \frac{\{x_{k+1,0}\xi^6\} X_{k+1}^{-1}}{\{x_{k,0}\xi^6\} X_k}
 \ll \{x_{k+1,0}\xi^6\} \|P_k\|^{-\gamma^2}.
\]
By Proposition \ref{prop:deg6:pfrac}, this means that, for infinitely many values of $k$, we have
\[
 |\xi-\alpha_k|
 \le c_3 k^{-1} \|P_k\|^{-\gamma-1}
\]
with a constant $c_3=c_3(\xi)>0$.  For these values of $k$, the estimate \eqref{prop:Pk:eq1} follows because, since $\alpha_k$ is a root of $P_k$, we have on the one hand $\|P_k\| \gg H(\alpha_k)$, and, since $\|P_k\| \ll X_k \ll c_4^{\gamma^k}$ for some constant $c_4>1$, we have on the other hand $k\gg \log\log H(\alpha_k)$. Finally, $H(\alpha_k)$ goes to infinity with $k$ because $\lim_{k\to\infty}|\xi-\alpha_k|=0$ and $\xi$ is transcendental over $\bQ$.
\end{proof}



\begin{thebibliography}{99}
%
\bibitem{AB}
  B.~Adamczewski et Y.~Bugeaud,
  Mesures de transcendance et aspects quantitatifs
  de la m\'ethode de Thue--Siegel--Roth--Schmidt,
  {\it Proc.\ London Math.\ Soc.}, to appear, 26 pp.
%
\bibitem{AR}
  B.~Arbour and D.~Roy,
  A Gel'fond type criterion in degree two,
  {\it Acta Arith.\ }{\bf 11} (2004), 97-103.
%
\bibitem{BL}
  Y.~Bugeaud and M.~Laurent,
  On exponents of homogeneous and inhomogeneous Diophantine Approximation,
  {\it Moscow Math.~J.\ }{\bf 5} (2005), 747-766.
%
\bibitem{BT}
  Y.~Bugeaud et O.~Teuli\'e,
  Approximation d'un nombre r\'eel par des nombres alg\'ebriques de degr\'e donn\'e,
  {\it Acta Arith.\ }{\bf 93} (2000), 77--86.
%
\bibitem{Ca}
  J.~W.~S.~Cassels,
  {\it An Introduction to Diophantine Approximation},
  Cambridge Tracts in Mathematics and Mathematical Physics, No.~45,
  Cambridge U.~Press, New-York, 1957, x+166pp.
%
\bibitem{DS}
  H.~Davenport, W.~M.~Schmidt,
  Approximation to real numbers by algebraic integers,
  {\it Acta Arith.\ }{\bf 15} (1969), 393--416.
%
\bibitem{J}
  V.~Jarn\'{\i}k,
  Zwei Bemerkungen zur Geometrie der Zahlen,
  {\it V\v{e}stn\'{\i}k Kr\'{a}lovsk\'e
  C\v{e}sk\'e Spole\v{c}nosti Nauk. T\v{r}\'{\i}da
  Matemat.-P\v{r}\'{\i}rodov\v{e}d.} {\bf 1941}, (1941), 12 pp.
%
\bibitem{GL}
  P.~M.~Gruber, C.~G.~Lekkerkerker,
  {\it Geometry of numbers},
  North-Holland Math.~ Library, vol.~37,
  second edition, North-Holland, Amsterdam, 1987.
%
\bibitem{Ma}
 A.~Markoff,
 Sur les formes quadratiques binaires ind\'efinies,
 {\it Math.\ Ann.\ }{\bf 15} (1879), 381--409.
%
\bibitem{Mb}
 A.~Markoff,
 Sur les formes quadratiques binaires ind\'efinies,
 {\it Math.\ Ann.\ }{\bf 17} (1880), 379--399.
%
\bibitem{RcubicI}
  D.~Roy,
  Approximation to real numbers by cubic algebraic integers (I),
  {\it Proc.\ London Math.\ Soc.\ }{\bf 88} (2004), 42--62.
%
\bibitem{RcubicII}
  D.~Roy,
  Approximation to real numbers by cubic algebraic integers (II),
  {\it Ann.\ of Math.\ }{\bf 158} (2003), 1081--1087.
%
\bibitem{Rdio}
  D.~Roy,
  Diophantine approximation in small degree,
  in: {\it Number theory}, Eds: E.~Z.~Goren and H.~Kisilevsky,
  CRM Proceedings and Lecture Notes {\bf 36}
  (Proceedings of CNTA-7),
  2004, 269--285;
  arXiv:math.NT/0303150.
%
\bibitem{Rmarkoff}
  D.~Roy,
  Markoff-Lagrange spectrum and extremal numbers,
  {\it Acta Math.}, to appear, 32 pages.
%
\bibitem{Te}
  O.~Teuli\'e,
  Approximation d'un nombre r\'eel par des unit\'es alg\'ebriques,
  {\it Monatsh. Math.\ }{\bf 132} (2001),  169--176.
%
\bibitem{Ze}
  D.~Zelo,
  {\it Simultaneous approximation to real and p-adic numbers},
  Ph.D.~thesis, University of Ottawa, 2008, 145pp.
%
\end{thebibliography}
\end{document}